\newtheorem{theorem}{Theorem}[section]
\newtheorem{lemma}{Lemma}[section]
\newtheorem{corollary}{Corollary}[section]
\newtheorem{definition}{Definition}[section]
\newtheorem{remark}{Remark}[section]
\newtheorem{proposition}{Proposition}[section]
\title{Ap\'ery-Fermi pencil of $K3$-surfaces and their $2$-isogenies}
\begin{document}

\author[M.-J. BERTIN and Odile LECACHEUX]{Marie Jos\'e BERTIN and Odile Lecacheux}

\keywords{ Elliptic fibrations of $K3$ surfaces, Morrison-Nikulin involutions, Isogenies }
\subjclass{14J27,14J28,14J50,14H52 (Primary) 11G05 (Secondary)}

\date{\today}

\curraddr{Sorbonne Universit\'e\\ Institut
de Math\'ematiques de Jussieu-Paris Rive Gauche\\
Case 247\\
 4 Place Jussieu, 75252 PARIS Cedex 05, France}

\email{marie-jose.bertin@imj-prg.fr\\
odile.lecacheux@imj-prg.fr}

\begin{abstract}
Given a generic $K3$ surface $Y_k$ of the Ap\'ery-Fermi pencil, we use 
the Kneser-Nishiyama technique to determine all its non isomorphic elliptic fibrations.
These computations lead to determine those fibrations with 2-torsion sections T. We classify the fibrations such that the translation by T gives a Shioda-Inose structure. The other fibrations correspond to a K3 surface identified by it transcendental lattice.
The same problem is solved for a singular 
member $Y_2$ of the family showing the differences with the generic 
case. In conclusion we put our results in the context of relations 
between $2$-isogenies and isometries on the singular surfaces of the family.
\end{abstract}

\maketitle
%\tableofcontents.
\section{Introduction}

The Ap\'ery-Fermi pencil $\mathcal{F}$ is realised with the equations
\[X+\frac{1}{X}+Y+\frac{1}{Y}+Z+\frac{1}{Z}=k, \,\,\,\,\,\,k\in \mathbb Z,\]
and taking $k=s+\frac{1}{s}$, is seen as the Fermi threefold $\mathcal Z$ with compactification denoted $\bar{\mathcal Z}$ \cite{PS}.

The projection $\pi_s :\bar{\mathcal Z}\rightarrow \mathbb P^1(s)$ is called the Fermi fibration. In their paper \cite{PS}, Peters and Stienstra proved that for $s\notin \{0,\infty,\pm 1, 3\pm 2\sqrt{2},-3\pm 2\sqrt{2} \}$ the fibers of the Fermi fibration are $K3$ surfaces with the N\'eron-Severi lattice of the generic fiber isometric to $E_8(-1)\oplus E_8(-1) \oplus U \oplus \langle -12 \rangle $ and transcendental lattice isometric to $T=U\oplus \langle 12 \rangle $ ($U$ denotes the hyperbolic lattice and $E_8$ the unimodular lattice of rank $8$).
Hence this family appears as a family of $M_6$-polarized $K3$-surfaces $Y_k$ with period $t\in \mathcal H$. And we deduce from a result of Dolgachev \cite{D} the following property. Let $E_t=\mathbb C/\mathbb Z+t\mathbb Z$ and $E'_t=\mathbb C/\mathbb Z+(-\frac{1}{6t})\mathbb Z$ be the corresponding pair of isogenous elliptic curves. Then there exists a canonical involution $\tau$ on $Y_k$ such that $Y_k/(\tau)$ is birationally isomorphic to the Kummer surface $E_t \times E'_t/(\pm 1)$.

This result is linked to the Shioda-Inose structure of $K3$-surfaces with Picard number $19$ and $20$ described first by Shioda and Inose \cite{SI} and extended by Morrison \cite{M}.

As observed by Elkies \cite{E}, the base of the pencil of $K3$ surfaces can be identified with the elliptic modular curve $X_0(6)/\langle w_2,w_3 \rangle $. Indeed it can be derived from Peters and Stienstra \cite{PS}.

%Let us recall that the Shioda's Sandwich theorem \cite{Shio}, explicites the Shioda-Inose structure for elliptic $K3$-surfaces (with a section) with two $II^*$-singular fibers, illustrating the fact that such a $K3$-surface is ``isogenous'' to a Kummer surface $K$ in a very concrete sense.

% This family has also been studied from other points of view by Verrill \cite{V}, then Bertin \cite{Be}. Moreover the periods of the Ap\'ery-Fermi pencil satisfy a Picard-Fuchs equation given by Peters and Stienstra \cite{PS} and used by Bertin \cite{Be} to express the Mahler measure of the Laurent polynomials of the family in terms of the $L$-series of the corresponding $K3$-surface, denoted $Y_k$, when this latter has Picard number $20$. Bertin observed also that $Y_2$ and $Y_{10}$ have the same $L$-series and suspected an algebraic relation between $Y_2$ and $Y_{10}$. 
In \cite{Shio}, Shioda considers the  problem whether every Shioda-Inose structure can be
extended to a sandwich. More precisely Shioda proved a ''Kummer sandwich
theorem'' that is, for an elliptic $K3$-surface $X$ (with a section) with two
II*-fibres, there exists a unique Kummer surface $S=Km(C_{1}\times C_{2})$
with two rational maps of degree 2, $X\rightarrow S$ and $S\rightarrow X$ where
$C_{1}$ and $C_{2}$ are elliptic curves.

In van Geemen-Sarti \cite{G}, Comparin-Garbagnati \cite{C}, Koike \cite{Ko} and  Sch\"{u}tt \cite{Sc}, sandwich Shioda-Inose
structures are constructed via elliptic fibrations with 2-torsion sections. 

  Recently Bertin and Lecacheux \cite{BL} found all the elliptic fibrations of a singular member $Y_2$ of $\mathcal{F}$ (i.e. of Picard number $20$) and observed that many of its elliptic fibrations are endowed with $2$-torsion sections. Thus a question arises: are the corresponding $2$-isogenies between $Y_2$ and this new $K3$-surface $S_2$ all Morrison-Nikulin meaning that $S_2$ is Kummer? Observing also that the Shioda's Kummer sandwiching between a $K3$ surface $S$ and its Kummer $K$ is in fact a $2$-isogeny between two elliptic fibrations of $S$ and $K$, we extended the above question to the generic member $Y_k$ of the family $\mathcal{F}$ and obtained the following results.

\begin{theorem}
Suppose $Y_k$ is a generic $K3$ surface of the family with Picard number $19$.

Let $\pi:Y_{k}\rightarrow\mathbb{P}^{1}$ be an elliptic fibration with a torsion section of order $2$ which defines an involution $i$ of $Y_{k}$ (Van Geemen-Sarti involution) then the quotient $Y_{k}/i$ is either the Kummer surface $K_{k}$ associated to $Y_{k}$ given by its Shioda-Inose structure or a surface $S_k$ with transcendental lattice  $T_{S_k}= \langle -2 \rangle \oplus  \langle 2 \rangle  \oplus  \langle 6 \rangle $ and N\'eron-Severi lattice $NS(S_k)=U\oplus E_8(-1)\oplus E_7(-1) \oplus \langle (-2) \rangle \oplus \langle (-6) \rangle $, which is not a Kummer surface by a result of Morrison \cite{M}. The $K3$ surface $S_k$ is the Hessian $K3$ surface of a general cubic surface with $3$ nodes studied by Dardanelli and van Geemen \cite{DG}. Thus, $\pi$ leads to an elliptic fibration either of $K_k$ or of $S_k$. Moreover there exist some genus $1$ fibrations  $\theta:K_{k}$ $\rightarrow \mathbb{P}^{1}$ without section such that their Jacobian variety satisfies $J_{\theta}\left(  K_{k}\right)=S_k$. 

More precisely, among the elliptic fibrations of $Y_{k}$ (up to automorphisms) $12$ of them have a
two-torsion section.  And only $7$ of them possess a Morisson-Nikulin involution $i$ such that $Y_k/i=K_{k}$.
\end{theorem}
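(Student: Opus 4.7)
The plan is to begin from the classification, via the Kneser--Nishiyama technique, of all non-isomorphic elliptic fibrations of $Y_{k}$. Starting from the transcendental lattice $T=U\oplus \langle 12\rangle$, I would list the primitive embeddings of $T(-1)$ (or of a suitable orthogonal complement) into each of the $24$ Niemeier lattices; the orthogonal complement in the chosen Niemeier lattice provides, after removing a hyperbolic summand, the frame lattice $W$ of each candidate fibration, hence its trivial lattice and thus the combinatorial type of its singular fibres. The Mordell--Weil group (including its torsion) is then recovered from $NS(Y_{k})/(U\oplus W)$ via the Shioda--Tate formula. This step produces the finite list of elliptic fibrations and, by inspection of the torsion part, the sublist of $12$ fibrations carrying a section $T$ of order $2$.

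Next, for each of these $12$ fibrations $\pi$, the translation by $T$ defines the Van Geemen--Sarti involution $i$; since $i$ is symplectic, the quotient $Y_{k}/i$ desingularises to a $K3$ surface $Z$, and $\pi$ descends to a genus one fibration $\pi'$ on $Z$ whose smooth fibres are $2$-isogenous to those of $\pi$. The transformation of the reducible fibres under this $2$-isogeny is then read off from Kodaira's table (for instance $I_{n}\leftrightarrow I_{2n}$ when the section meets the identity component, and analogous rules for $I_{n}^{*}$, $III$, $III^{*}$), which together with the induced height pairing determines the trivial lattice of $\pi'$ and hence $NS(Z)$ and $T_{Z}$ up to genus. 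The outcome I expect is a dichotomy: in $7$ cases $T_{Z}\simeq U\oplus \langle 12\rangle=T_{Y_{k}}$, so that $Z$ is the Kummer surface $K_{k}$ coming from the Shioda--Inose structure of $Y_{k}$ (these are the Morrison--Nikulin involutions), while in the remaining $5$ cases $T_{Z}\simeq \langle -2\rangle \oplus \langle 2\rangle \oplus \langle 6\rangle$, determining $Z$ as a fixed $K3$ surface $S_{k}$ with the announced Néron--Severi lattice.

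Once the transcendental lattice of $S_{k}$ is pinned down, the fact that $S_{k}$ is not Kummer follows from Morrison's criterion: a Kummer $K3$ surface has a transcendental lattice which is $2$-elementary of a very specific form (namely of the shape $\Gamma(2)$), whereas $\langle -2\rangle \oplus \langle 2\rangle \oplus \langle 6\rangle$ is not of that type. The identification of $S_{k}$ with the Hessian $K3$ of a generic cubic surface with three nodes is obtained by comparing transcendental lattices with those computed by Dardanelli--van Geemen; since the moduli space of such Hessians has the same dimension and the same generic transcendental lattice, the coincidence of lattices determines $S_{k}$ uniquely in the moduli of lattice-polarized $K3$ surfaces. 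Finally, to produce the genus one fibrations $\theta:K_{k}\to \mathbb{P}^{1}$ without section whose Jacobian is $S_{k}$, I would go back through the Morrison--Nikulin cases and, on the Kummer side, pick the quotient fibrations on $K_{k}$ associated to the dual $2$-isogeny; their Jacobian is then computed by the same $2$-isogeny procedure applied in reverse and falls on $S_{k}$.

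The main obstacle will be the bookkeeping in step two: for each of the $12$ fibrations one must track precisely which components of each reducible fibre are hit by the $2$-torsion section, deduce the singular fibres of $\pi'$, and then compute the transcendental lattice of $Z$ to certify whether one lands on $K_{k}$ or on $S_{k}$. A subsidiary technical point is to make sure that the list produced by Kneser--Nishiyama is exhaustive up to the automorphisms of $Y_{k}$, so that the counts $12$ and $7$ are sharp and not over-counted through isomorphic fibrations.
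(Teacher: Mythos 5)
Your overall skeleton (Kneser--Nishiyama classification, then a case-by-case study of the twelve $2$-torsion fibrations through their quotients) is the one the paper follows, but the certification step --- deciding which quotients are the Kummer surface $K_k$ --- contains a genuine gap, and one statement is wrong. First, for a Morrison--Nikulin involution the quotient satisfies $T_{K_k}\simeq T_{Y_k}(2)=U(2)\oplus\langle 24\rangle$ (discriminant $96=12\times 8$), not $T_Z\simeq U\oplus\langle 12\rangle=T_{Y_k}$ as you assert; the two branches of the dichotomy are separated in the paper by whether the discriminant of $NS(Z)$ is $12\times 8$ or $12\times 2$. Second, and more seriously, reading off the Kodaira types of $\pi'$ is not enough to pin down $T_Z$: you also need generators of the Mordell--Weil lattice of $\pi'$ (the paper computes these from explicit Weierstrass equations produced by Elkies's $2$- and $3$-neighbor method), and even then the discriminant form determines $T_Z$ only within its genus. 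The paper closes this by (i) a one-class-per-genus argument, valid because $|\det|=24$ is small, and (ii) for the Kummer cases, by directly exhibiting the Shioda--Inose structure --- two orthogonal copies of $E_8(-1)$ interchanged by the translation --- for fibrations \#17, \#8 and \#4, by an explicit identification of the quotient of \#16 with $E_1\times E_2/\pm 1$ via the two-parameter family $S_{a,b}$ with two $I_4^*$ fibres, and then by chaining the remaining fibrations to these through neighbor steps performed on the quotient surface. Without some such argument, coincidence of discriminants alone does not certify that $Z$ is the particular Kummer surface of the Shioda--Inose structure; the paper's Remark 5.1 gives a cautionary example where the quotient by a $2$-torsion translation has discriminant $12$ and the involution is \emph{not} Morrison--Nikulin.

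For the last assertion of the theorem your plan also misfires: taking ``the quotient fibrations on $K_k$ associated to the dual $2$-isogeny'' and running the isogeny in reverse returns you to $Y_k$, not to $S_k$. What the paper actually does is exhibit genuinely new genus-one pencils on $K_k$ admitting no section --- for instance, starting from the fibration \#26-i of $K_k$, passing to an auxiliary elliptic fibration $E_m$, and then taking the parameter $n=X/m^2$ --- and compute the Jacobian of the resulting quartic by the standard $c_4,c_6$ formulae; the answer is recognized as the fibration \#15-i of $S_k$. This construction has to be supplied separately; it does not follow from the $2$-isogeny bookkeeping.
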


%For $k=2,$ let $\pi...$ (m\^{e}me propri\'{e}t\'{e}s que dans le cas g\'{e}n\'{e}ral) or $Y_{2}/i\equiv Y_{2}.$ In the second case $\pi$ defines the same class of fibration (up to an automorphism of the base $\mathbb{P}$ or exchange two different class of fibrations of $Y_{2}.$

\begin{theorem}

In the Ap\'ery-Fermi pencil, the $K3$-surface $Y_2$ is singular, meaning that its Picard number is $20$. Moreover $Y_2$ has many more $2$-torsion sections than the generic $K3$ surface $Y_k$; hence among its $19$ Van Geemen-Sarti involutions, $13$ of them are Morrison-Nikulin involutions, $5$ are symplectic automorphisms of order $2$ (self-involutions) and one exchanges two elliptic fibrations of $Y_2$.

The specializations to $Y_2$ of the $7$ Morrison-Nikulin involutions of a generic member $Y_k$ are verified among the $13$ Morrison-Nikulin involutions of $Y_2$, as proved in a general setting by Sch\"{u}tt \cite{Sc}. The specializations of the $5$ involutions between $Y_k$ and the $K3$-surface $S_k$ are among the $6$ Van Geemen-Sarti involutions of $Y_2$ which are not Morrison-Nikulin.

\end{theorem}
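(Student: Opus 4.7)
The plan is to leverage the classification of all elliptic fibrations of $Y_2$ obtained in \cite{BL}, and then analyze each fibration carrying a $2$-torsion section by the standard Van Geemen-Sarti recipe. First I would extract from the Bertin-Lecacheux list the exhaustive collection of elliptic fibrations on $Y_2$ admitting a section of order two; by direct inspection this produces $19$ candidates, each recorded by its reducible singular fibers and Mordell-Weil torsion. For each such $\pi : Y_2 \to \mathbb{P}^1$ with $2$-torsion section $T$, translation by $T$ defines the Van Geemen-Sarti involution $i$, and the singular fibers of the quotient fibration $\pi/i : Y_2/i \to \mathbb{P}^1$ are obtained by the usual Kodaira rules (an $I_n^*$ fiber becomes $I_{2n}$ or $I_n^*$ depending on which node $T$ hits, $I_{2n}$ becomes $I_n^*$ or $I_n+I_n$, etc.). From this fiber configuration plus the rank information one reconstructs $\mathrm{NS}(Y_2/i)$, and hence the transcendental lattice $T_{Y_2/i}$.

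Once each quotient transcendental lattice is computed, one classifies the involutions into the three announced types by a direct lattice comparison. The involution is Morrison-Nikulin exactly when $Y_2/i$ is the Kummer surface $K_2$ associated to $Y_2$ via the Shioda-Inose structure; equivalently, when the computed $T_{Y_2/i}$ matches $T_{K_2}$ (which is known explicitly from the Shioda-Inose correspondence, since $Y_2$ has Picard number $20$ and a known transcendental lattice of rank $2$). The involution is a symplectic self-involution when $T_{Y_2/i} \cong T_{Y_2}$, so that $Y_2/i \cong Y_2$; this is exactly the "odd" case produced when the quotient fibration has the same Néron-Severi type as the original. The remaining exceptional involution is detected by comparing its source and target fibrations on $Y_2$: it is the unique case where the input and output fibrations on $Y_2$ are non-isomorphic but both are fibrations of the same $K3$. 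Counting these three cases against the total $19$ should give the breakdown $13 + 5 + 1$ as claimed.

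For the specialization statement, I would set up a correspondence between fibrations on a generic $Y_k$ from Theorem 1 and fibrations on $Y_2$ by tracking the rank-one jump in Néron-Severi: a generic fibration with trivial lattice $L$ lifts on $Y_2$ to a fibration whose trivial lattice either contains $L$ with the extra $\langle -2 \rangle$ absorbed into the frame, or whose Mordell-Weil group gains rank. In each of the $12$ two-torsion fibrations of $Y_k$, I would identify explicitly which singular fibration of $Y_2$ it specializes to (this is largely a matter of matching the discriminant behavior of the Weierstrass model as the parameter $k \to 2$), and then check that the specialization of a Morrison-Nikulin involution is again Morrison-Nikulin (as guaranteed in general by Schütt \cite{Sc}) and, symmetrically, that the specialization of a $Y_k \to S_k$ involution lands among the six non-Morrison-Nikulin involutions of $Y_2$.

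The main obstacle will be the specialization step, specifically the bookkeeping for the five $Y_k \to S_k$ involutions: because $S_k$ is not Kummer, one must verify that after specialization the target $Y_2/i$ is still a non-Kummer $K3$ and not accidentally a Kummer (which could happen in principle when $\mathrm{Pic}$ jumps). This amounts to showing that the transcendental lattice of the specialized target remains of the non-Kummer type detected by Morrison's $2$-elementary criterion \cite{M}. Everything else reduces to finite combinatorial checks on the tables of fibers and on explicit lattice embeddings already made available by the Kneser-Nishiyama procedure used in \cite{BL}.
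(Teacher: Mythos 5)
Your overall architecture --- start from the Bertin--Lecacheux list of elliptic fibrations of $Y_2$, isolate the two-torsion sections, quotient by the Van Geemen--Sarti involution, and sort the quotients into ``Kummer'', ``$Y_2$ again with the same fibration'' and ``$Y_2$ again with a different fibration'' --- matches the paper's, and your use of Sch\"utt's specialization lemma for the seven Morrison--Nikulin involutions coming from $Y_k$ is exactly what the paper does. Where you genuinely diverge is the identification step. You propose to recognise each quotient surface by computing its transcendental lattice and comparing it with $T_{K_2}$ and $T_{Y_2}$; the paper never computes $T_{Y_2/i}$ in its proof. Instead it identifies each quotient \emph{as a fibration}: the non-specialising Morrison--Nikulin quotients are matched against explicit elliptic fibrations of $E\times E/\pm 1$ with $j(E)=8000$ built from the Kuwata--Shioda parameters ($F_6,F_8,G_8,F_5$ of Section 4.2), against the Shimada--Zhang list of the four extremal fibrations of $K_2$ (Lemma 6.1), and against one another by $2$-neighbor steps; the self-involutions are read off directly from the quotient Weierstrass equations in Table 9. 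Your lattice-theoretic route is legitimate in principle --- a singular $K3$ is determined by its oriented transcendental lattice, and the relevant lattices here are alone in their genera --- and would give a more uniform argument; the paper's route has the advantage of producing explicit Weierstrass models for all the quotient fibrations, which it needs anyway.

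As written, though, your plan has a genuine gap at its central step. From ``the fiber configuration plus the rank information'' you cannot reconstruct $NS(Y_2/i)$: the trivial lattice (fiber components plus zero section) determines $NS$ only up to a finite-index overlattice, and to pin down the determinant and the discriminant form you must exhibit generators of the Mordell--Weil group, compute their heights, and determine the torsion. That is precisely the computational content of the paper's Tables 8 and 9, and for the positive-rank quotients it cannot be skipped. Comparing determinants alone would not suffice either, since several inequivalent even rank-two lattices share determinant $32=\det T_{K_2}$, so the full discriminant form is needed. Two smaller points: your quotient rule ``an $I_n^*$ fiber becomes $I_{2n}$'' is incorrect --- additive fibers stay additive under a $2$-isogeny (e.g.\ the two $I_4^*$ fibers of \#16 become $I_2^*$); and the worry you correctly raise about a specialised $Y_k\to S_k$ involution landing ``accidentally'' on a Kummer surface is resolved in the paper by the fact that $S_k$ specialises to $Y_2$ itself at $k=2$ (Lemma of Section 5.2) and that $T_{Y_2}=[2\;0\;4]$ is not twice an even lattice, so $Y_2$ is not Kummer --- a verification your outline flags as necessary but does not supply.
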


This theorem provides an example of a Kummer surface $K_2$ defined by the product of two isogenous elliptic curves (actually the same elliptic curve of $j$-invariant equal to $8000$), having many fibrations of genus one  whose Jacobian surface is not a Kummer surface. A similar result but concerning a Kummer surface defined by two non-isogenous elliptic curves has been exhibited by Keum \cite{K}.

Throughout the paper we use the following result \cite{Si}. If $E$ denotes an elliptic fibration with a 2-torsion point $(0,0)$:
\[ E: y^2=x^3+Ax^2+Bx,\]
the quotient curve $E/\langle(0,0)\rangle $ has a Weierstrass equation of the form
\[E/\langle (0,0) \rangle : y^2=x^3-2Ax^2+(A^2-4B)x.\]

The paper is organised as follows.

In section $2$ we recall the Kneser-Nishiyama method and use it to find all the $27$ elliptic fibrations of a generic $K3$ of the family $\mathcal{F}$. In section $3$, using Elkies's method of "$2$-neighbors" \cite{El}, we exhibit an elliptic parameter giving a Weierstrass equation of the elliptic fibration. The results are summarized in Table 2. Thus we obtain all the Weierstrass equations of the $12$ elliptic fibrations with $2$-torsion sections. Their $2$-isogenous elliptic fibrations are computed in section $5$ with their Mordell-Weil groups and discriminants. 
Section $4$  recalls generalities about Nikulin involutions and  Shioda-Inose structure. Section $5$ is devoted to the proof of Theorem 1.1 while section $6$ is concerned with the proof of Theorem 1.2.

In the last section $7$, using a theorem of Boissi\` ere, Sarti and Veniani \cite{BSV}, we explain why Theorem 1.2 cannot be generalised to the other singular K3 surfaces of the family.

Computations were performed using partly the computer algebra system PARI \cite{PA} and mostly the computer algebra system MAPLE and the Maple Library ``Elliptic Surface Calculator'' written by Kuwata \cite{Ku1}.

\section{Elliptic fibrations of the family}

We refer to \cite{BL}, \cite{Sc-Shio} for definitions concerning lattices, primitive embeddings, orthogonal complement of a sublattice into a lattice. We recall only what is essential for understanding this section and section 5.2.
\subsection{Discriminant forms}

Let $L$ be a non-degenerate lattice. 
The \textbf{dual lattice} $L^*$ of $L$ is defined by
$$L^*:=\text{Hom}(L,\mathbb Z) =\{x\in L \otimes \mathbb Q /\,\,\, b(x,y)\in \mathbb Z \hbox{  for all }y \in L \}$$
%Obviously $L^*$ is an overlattice of $L$ but its form takes non-integer values too. The canonical bilinear form on $L^*$ induced by $b$ is denoted by the same letter.
and the \textbf{discriminant group $G_L$} by
$$G_L:=L^*/L.$$
This group is finite if and only if $L$ is non-degenerate. In the latter case, its order is equal to the absolute value of the lattice determinant $\mid \det (G(e)) \mid$ for any basis $e$ of $L$.
A lattice $L$ is \textbf{unimodular} if $G_L$ is trivial.

Let $G_L$ be the discriminant group of a non-degenerate lattice $L$. The bilinear form on $L$ extends naturally to a $\mathbb Q$-valued symmetric bilinear form on $L^*$ and induces a symmetric bilinear form 
$$b_L: G_L \times G_L \rightarrow\mathbb Q / \mathbb Z.$$
If $L$ is even, then $b_L$ is the symmetric bilinear form associated to the quadratic form defined by
$$
\begin{matrix}
q_L: G_L  &\rightarrow  & \mathbb Q/2\mathbb Z\\
q_L(x+L) & \mapsto & x^2+2\mathbb Z.
\end{matrix}
$$

The latter means that $q_L(na)=n^2q_L(a)$ for all $n\in \mathbb Z$, $a\in G_L$ and $b_L(a,a')=\frac {1}{2}(q_L(a+a')-q_L(a)-q_L(a'))$, for all $a,a' \in G_L$, where $\frac {1}{2}:\mathbb Q/2 \mathbb Z \rightarrow \mathbb Q / \mathbb Z$ is the natural isomorphism.
The pair $\boldsymbol{(G_L,b_L)}$ (resp. $\boldsymbol{(G_L,q_L)}$) is called the \textbf{discriminant bilinear} (resp. \textbf{quadratic}) \textbf{form} of $L$.

The \textbf{ lattices} $A_n=\langle a_1,a_2,\ldots ,a_n \rangle$ ($n\geq 1$), $D_l=\langle d_1,d_2,\ldots ,d_l \rangle$ ($l\geq 4$), $E_p=\langle e_1,e_2,\ldots ,e_p \rangle$ ($p=6,7,8$) defined by the following \textbf{Dynkin diagrams} are called the \textbf{root lattices}. All the vertices $a_j$, $d_k$, $e_l$ are roots and two vertices $a_j$ and $a_j'$ are joined by a line if and only if $b(a_j,a_j')=1$. We use Bourbaki's definitions \cite{Bou}. The discriminant groups of these root lattices are given below.

\noindent
\begin{minipage}[t]{0.58\textwidth}
$\mathbf{A_n,\,\, G_{A_n}}$

Set 

$[1]_n=\frac {1}{n+1} \sum_{j=1}^{n}(n-j+1)a_j$

then $A_n^*=\langle A_n,[1]_n \rangle$ and 

$ G_{A_n}=A_n^*/A_n  \simeq \mathbb Z /(n+1)\mathbb Z .$

$q_{A_n}([1]_n)=-\frac{n}{n+1}.$
\end{minipage}
\begin{minipage}[t]{0.4\textwidth}
\bigskip
\begin{center}
\begin{tikzpicture}[scale=0.4]%
\draw[fill=black](0.1,1.7039)circle (0.0865cm)node [below] {$a_1$};
\draw[fill=black](2.9,1.7039)circle (0.0865cm)node [below] {$a_2$};
\draw[fill=black](5.7,1.7039)circle (0.0865cm)node [below] {$a_3$};
\draw[fill=black](9.9,1.7039)circle (0.0865cm)node [below]{$a_n$};
\draw[thick](0.1,1.7039)--(6.81,1.7039);
\draw[dashed,thick](7.3,1.7039)--(8.6,1.7039);
\draw[thick](9.01,1.7039)--(9.9,1.7039);
\end{tikzpicture}
\end{center}

\end{minipage}

\bigskip
\noindent
\begin{minipage}[t]{.58 \textwidth}
$\mathbf{D_l,\,\, G_{D_l}}.$ 

Set

\noindent
$[1]_{D_l}  = \frac {1}{2}\left( \sum_{i=1}^{l-2}id_i+\frac {1}{2}(l-2)d_{l-1}+\frac {1}{2}ld_l \right)$

\noindent
$[2]_{D_l}  = \sum_{i=1}^{l-2} d_i+\frac {1}{2}(d_{l-1}+d_l)$

\noindent
$[3]_{D_l}  = \frac {1}{2}\left( \sum_{i=1}^{l-2}id_i+\frac {1}{2}ld_{l-1}+\frac {1}{2}(l-2)d_l  \right)$

\noindent
then $D_l^*= \langle D_l, [1]_{D_l}, [3]_{D_l} \rangle,$

\noindent
$G_{D_l}=D_l^*/D_l= \langle [1]_{D_l} \rangle \simeq \mathbb Z/4\mathbb Z \,\,\,\,\text{if } l \text{ is odd,}$

\end{minipage}
\begin{minipage}[t]{.4 \textwidth}
\smallskip
\begin{center}
\begin{tikzpicture}[scale=0.4]%
\draw[fill=black](0.1,1.7039)circle (0.0865cm)node [below] {$d_l$};
\draw[fill=black](2.9,1.7039)circle (0.0865cm)node [below] {$d_{l-2}$};
\draw[fill=black](5.7,1.7039)circle (0.0865cm)node [below] {$d_{l-3}$};
\draw[fill=black](9.9,1.7039)circle (0.0865cm)node [below]{$d_1$};
\draw[fill=black](2.9,3.10)circle (0.0865cm)node [above]{$d_{l-1}$};
\draw[thick](0.1,1.7039)--(6.81,1.7039);
\draw[dashed,thick](7.3,1.7039)--(8.6,1.7039);
\draw[thick](9.01,1.7039)--(9.9,1.7039);
\draw[thick](2.9,1.7039)--(2.9,3.10);
\end{tikzpicture}

\end{center}
\end{minipage}

\noindent
$G_{D_l}=D_l^*/D_l= \langle [1]_{D_l}, [2]_{D_l} \rangle \simeq \mathbb Z/2\mathbb Z \times \mathbb Z/2\mathbb Z \,\,\,\,\text{if }l \text{ is even}.$

\noindent
$q_{D_l}([1]_{D_l})=-\frac{l}{4},\, \,\,q_{D_l}([2]_{D_l})=-1,\,\,\,b_{D_l}([1],[2])=-\frac{1}{2}. $

\bigskip
\noindent
\begin{minipage}[t]{0.58\textwidth}
$\mathbf{E_p,\,\,G_{E_p}}\,\quad p=6,7,8.$

Set

$[1]_{E_6}:=\eta_6 =-\frac {1}{3}(2e_1+3e_2+4e_3+6e_4+5e_5+4e_6)$  and

$[1]_{E_7}:=\eta_7 =-\frac {1}{2}(2e_1+3e_2+4e_3+6e_4+5e_5+4e_6+3e_7)$,

then
$E_6^*=\langle E_6, \eta_6 \rangle $, $E_7^*=\langle E_7, \eta_7 \rangle$ and 
$E_8^*=E_8.$
\end{minipage}
\begin{minipage}[t]{0.4\textwidth}
\smallskip
\begin{center}
 \begin{tikzpicture}[scale=0.4]%
\draw[fill=black](0.1,1.7039)circle (0.0765cm)node [below] {$e_1$};
\,\,\draw[fill=black](2.9,1.7039)circle (0.0765cm)node [below] {$e_{3}$};
\draw[fill=black](5.7,1.7039)circle (0.0765cm)node [below] {$e_{4}$};
\draw[fill=black](9.9,1.7039)circle (0.0765cm)node [below]{$e_p$};
\draw[fill=black](5.7,3.10)circle (0.0765cm)node [above]{$e_{2}$};
\draw[thick](0.1,1.7039)--(6.81,1.7039);
\draw[dashed,thick](7.3,1.7039)--(8.6,1.7039);
\draw[thick](9.01,1.7039)--(9.9,1.7039);
\draw[thick](5.7,1.7039)--(5.7,3.10);
\end{tikzpicture}
\end{center}

\end{minipage}

\noindent
$G_{E_6}=E_6^*/E_6\simeq \mathbb Z /3 \mathbb{ Z},\, \,\,G_{E_7}=E_7^*/E_7 \simeq \mathbb Z /2 \mathbb Z,$  

\noindent

$q_{E_6(\eta_6)}=-\frac{4}{3}\,\,\,\,q_{E_7(\eta_7)}=-\frac{3}{2}.$

Let $L$ be a Niemeier lattice (i.e. an unimodular lattice of rank $24$). Denote $L_{\text{root}}$ its root lattice. We often write $L=Ni(L_{\text{root}})$. Elements of $L$ are defined by the glue code composed with glue vectors. Take for example $L=Ni(A_{11}D_7E_6)$. Its glue code is generated by the glue vector $[1,1,1]$ where the first $1$ means $[1]_{A_{11}}$, the second $1$ means $[1]_{D_7}$ and the third $1$ means $[1]_{E_6}$. In the glue code $\langle[1,(0,1,2)]\rangle$, the notation $(0,1,2)$ means any circular permutation of $(0,1,2)$. Niemeier lattices, their root lattices and glue codes used in the paper are given in Table \ref{Ta:Ni} (glue codes are taken from Conway and Sloane \cite{Co}).

\begin{table}

\begin{center}
\begin{tabular}{|l|l|l|}
\hline
$L_{\text{root}}$ & $L/L_{\text{root}}$   & \text{glue vectors}  \\\hline
$E_8^3$&$(0)$&$0$\\\hline
$D_{16}E_8$ & $\mathbb Z/2 \mathbb Z$ &$\langle[1,0]\rangle$   \\\hline
$D_{10}E_7^2$ & $(\mathbb Z/2 \mathbb Z)^2$ & $\langle[1,1,0],[3,0,1]\rangle$\\ \hline
$A_{17}E_7$ & $\mathbb Z/6 \mathbb Z$ & $\langle[3,1]\rangle $ \\\hline
$D_{24}$ & $\mathbb Z/2 \mathbb Z$ & $\langle[1]\rangle$\\ \hline
$D_{12}^2$ & $(\mathbb Z/2 \mathbb Z)^2$ & $\langle[1,2],[2,1]\rangle$ \\ \hline
$D_8^3$ &  $(\mathbb Z/2 \mathbb Z)^3$ & $\langle[1,2,2],[1,1,1],[2,2,1]\rangle$ \\ \hline
$A_{15} D_9$ & $\mathbb Z/8 \mathbb Z$ & $\langle[2,1]\rangle$ \\ \hline
$E_6^4$ &  $(\mathbb Z/3 \mathbb Z)^2$ & $\langle[1,(0,1,2)]\rangle$ \\ \hline
$A_{11}D_7E_6$ &  $\mathbb Z/12 \mathbb Z$ & $\langle[1,1,1]\rangle$ \\ \hline
$D_6^4$ &  $(\mathbb Z/2 \mathbb Z)^4$ & $\langle\text{even permutations of } [0,1,2,3]\rangle$\\ \hline
$A_9^2D_6$ & $\mathbb Z/10 \mathbb Z \times \mathbb Z/2 \mathbb Z$ & $\langle[2,4,0],[5,0,1],[0,5,3]\rangle$ \\ \hline
$A_7^2D_5^2$ &  $\mathbb Z/8 \mathbb Z \times \mathbb Z/4 \mathbb Z$ & $\langle[1,1,1,2],[1,7,2,1]\rangle$ \\ \hline

 \end{tabular}
\end{center}
\caption{Some Niemeier lattices and their glue codes \cite{Co}}\label{Ta:Ni}
\end{table}

\subsection{Kneser-Nishiyama technique}

%Denote $Y_k$ a generic $K3$ surface of the Ap\'ery-Fermi pencil of Picard number $19$.

We use the Kneser-Nishiyama method to determine all the elliptic fibrations of $Y_k$. For further details we refer to \cite{Nis}, \cite{Sc-Shio}, \cite{BL}, \cite {BGL}. In  \cite{Nis}, \cite{BL}, \cite {BGL} only singular $K3$ (i.e. of Picard number $20$) are considered. In this paper we follow \cite{Sc-Shio} we briefly recall.

Let $T(Y_k)$ be the transcendental lattice of $Y_k$, that is the orthogonal complement of $NS(Y_k)$ in $H^2(Y_k,\mathbb Z)$ with respect to the cup-product. The lattice $T(Y_k)$ is an even lattice of rank $r=22-19=3$ and signature $(2,1)$. Since $t=r-2=1$, $T(Y_k)[-1]$ admits a primitive embedding into the following indefinite unimodular lattice:
\[T(Y_k)[-1] \hookrightarrow U \oplus E_8\] 
where $U$ denotes the hyperbolic lattice and $E_8$ the unimodular lattice of rank $8$. Define $M$ as the orthogonal complement of a primitive embedding of $T(Y_k)[-1]$ in $U\oplus E_8$. Since
\[ T(Y_k)[-1]= \begin{pmatrix}

                   0 & 0 & -1 \\
                   0 & -12 & 0 \\
                   -1 & 0 & 0

               \end{pmatrix} ,\]
it suffices to get a primitive embedding of $(-12)$ into $E_8$. From Nishiyama \cite{Nis} we find the following primitive embedding:
\[ v= \langle 9e_2+6e_1+12e_3+18e_4+15e_5+12e_6+8e_7+4e_8 \rangle \hookrightarrow E_8,\]
giving $(v)_{E_8}^{\perp}=A_2 \oplus D_5$. Now the primitive embedding of $T(Y_k)[-1]$ in $U\oplus E_8$ is defined by $U\oplus v$; hence $M=(U\oplus v)_{U\oplus E_8}^{\perp} =A_2 \oplus D_5$. By construction, this lattice is negative definite of rank $t+6=1+6=r+4=3+4=26-\rho(Y_k)=7$ with discriminant form $q_M=-q_{T(Y_k)[-1]}=q_{T(Y_k)}=-q_{NS(Y_k)}$. Hence $M$ takes exactly the shape required for Nishiyama's technique.

All the elliptic fibrations come from all the primitive embeddings of $M=A_2 \oplus D_5$ into all the Niemeier lattices 
$L$. Since $M$ is a root lattice, a primitive embedding of $M$ into $L$ is in fact a primitive embedding into $L_{\text{root}}$. Whenever the primitive embedding is given by a primitive embedding of $A_2$ and $D_5$ in two different factors of $L_{\text{root}}$, or for the primitive embedding of $M$ into $E_8$, we use Nishiyama's results \cite{Nis}. Otherwise we have to determine the primitive embeddings of $M$ into $D_l$ for $l=8,9,10,12,16,24$. This is done in the following lemma.

\begin{lemma}\label{lem:2.1}
We obtain the following primitive embeddings.
\begin{enumerate}
\item
\[A_2\oplus D_5= \langle d_8,d_6,d_7,d_5,d_4,d_1,d_2 \rangle \hookrightarrow D_8\]
\[ \langle d_8,d_6,d_7,d_5,d_4,d_1,d_2 \rangle _{D_8}^{\perp}= \langle 2d_1+4d_2+6d_3+6d_4+6d_5+6d_6+3d_7+3d_8 \rangle =(-12)\]
\item
\[A_2\oplus D_5= \langle d_9,d_7,d_8,d_6,d_5,d_3,d_2 \rangle \hookrightarrow D_9\]
\[ \langle d_9,d_7,d_8,d_6,d_5,d_3,d_2 \rangle_{D_9}^{\perp}= \langle d_9+d_8+2d_7+2d_6+2d_5+2d_4+d_3-d_1,d_3+2d_2+3d_1 \rangle \]
with Gram matrix $\begin{pmatrix}      
                       -4 & 6 \\
                         6 & -12
                        \end{pmatrix}$   
  of determinant $12$.
\item
\[A_2\oplus D_5= \langle d_{n},d_{n-2},d_{n-1},d_{n-3},d_{n-4},d_{n-7},d_{n-6} \rangle \hookrightarrow D_{n}, n\geq 10\]
\[ \langle d_{n},d_{n-2},d_{n-1},d_{n-3},d_{n-4},d_{n-7},d_{n-6} \rangle_{D_{n}}^{\perp}=\]
\[  \langle a=d_n+d_{n-1}+2(d_{n-2}+...+d_2)+d_1,d_{n-6}+2d_{n-7}+3d_{n-8},d_{n-9},...,d_1 \rangle \]
%\[(A_2 \oplus D_5 )_{\text{root}}^{\perp}=D_{n-8}.\]
\[((A_2 \oplus D_5 )_{D_n}^{\perp})_{\text{root}}=D_{n-8}.\]
We have also the relation $2.[2]_{D_n}=a+d_1$, $a$ being the above root.

\end{enumerate}
\end{lemma}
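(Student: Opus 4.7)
The plan is to handle each part by (i) exhibiting the embedding as mutually orthogonal subdiagrams of the Dynkin diagram of $D_n$, (ii) deducing primitivity by a one-line saturation argument, and (iii) solving the linear system $v \cdot d_j = 0$ for $d_j$ in the image of $A_2 \oplus D_5$ to describe the orthogonal complement. Throughout I use the paper's negative-definite convention ($d_i^2 = -2$, adjacent $d_i \cdot d_j = 1$), with $d_{n-2}$ the trivalent vertex of $D_n$ having neighbors $d_{n-3}, d_{n-1}, d_n$.

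In each case $D_5$ is realized by $\{d_{n-4}, d_{n-3}, d_{n-2}, d_{n-1}, d_n\}$, and $A_2$ by a pair of adjacent simple roots disjoint from and not Dynkin-adjacent to this $D_5$: $\{d_1, d_2\}$ in $D_8$, $\{d_2, d_3\}$ in $D_9$, and $\{d_{n-7}, d_{n-6}\}$ in $D_n$ for $n \geq 10$. The two subdiagrams are mutually orthogonal, hence span $A_2 \oplus D_5$. Primitivity is immediate: if $S \subset \{d_1, \ldots, d_n\}$ denotes the chosen subset of $7$ simple roots and $x = \sum c_i d_i \in D_n$ satisfies $Nx \in \langle S \rangle_{\mathbb{Z}}$ for some $N \geq 1$, then $c_i = 0$ for $i \notin S$, so $x \in \langle S \rangle_{\mathbb{Z}}$. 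The orthogonality conditions $v \cdot d_j = 0$ translate into the recursion $c_{j-1} + c_{j+1} = 2 c_j$ along the chain (with the usual modification at the trivalent vertex), and counting free indices yields the correct rank: $1$, $2$, $n-7$ for parts (1), (2), (3) respectively. In parts (1) and (2), solving the recursion produces exactly the generators listed in the statement; direct inner-product calculations then yield the norm $-12$ for part (1) and the Gram matrix $\begin{pmatrix} -4 & 6 \\ 6 & -12 \end{pmatrix}$ of determinant $12$ for part (2).

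For part (3), the two free parameters may be taken to be $c_n$ and $c_{n-6}$, and the lemma's generator $a$ corresponds to the choice $(c_n, c_{n-6}) = (1, 2)$ together with the remaining coefficients determined by the recursion, so that $a^2 = -2$ makes $a$ a root of the orthogonal complement; the identity $2 [2]_{D_n} = a + d_1$ then follows by comparing coefficients with $2[2]_{D_n} = 2 \sum_{i=1}^{n-2} d_i + d_{n-1} + d_n$. A direct computation gives $a \cdot d_2 = -1$ and $a \cdot d_j = 0$ for $j \in \{1\} \cup \{3, \ldots, n-9\}$, so $(-a, d_1, d_2, \ldots, d_{n-9})$ forms a Dynkin diagram of type $D_{n-8}$ with trivalent vertex $d_2$. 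To conclude that this $D_{n-8}$ sublattice is the entire root sublattice, I write any $x \in (A_2 \oplus D_5)^{\perp}$ uniquely as $x = D + \kappa v_2$ with $D \in D_{n-8}$, $\kappa \in \mathbb{Z}$, and $v_2 := d_{n-6} + 2 d_{n-7} + 3 d_{n-8}$; using $v_2^2 = -12$, $v_2 \cdot (-a) = 0$ and $v_2 \cdot d_j = 3 \delta_{j, n-9}$ gives $x^2 = D^2 - 12 \kappa^2 + 6 \kappa \mu_{n-9}$, where $\mu_{n-9}$ denotes the coefficient of $d_{n-9}$ in $D$. In the standard realization $D_{n-8} = \{ y \in \mathbb{Z}^{n-8} : \sum y_i \in 2 \mathbb{Z} \}$, the simple root $d_{n-9}$ corresponds to the end of the chain opposite the fork, so $|D^2| \geq \mu_{n-9}^2$ for every $D$; imposing $x^2 = -2$ then yields $\mu_{n-9}^2 - 6 \kappa \mu_{n-9} + 12 \kappa^2 - 2 \leq 0$, whose discriminant $-12 \kappa^2 + 8$ forces $\kappa = 0$, after which $x \in D_{n-8}$ with $x^2 = -2$ is automatically a root of this root lattice.

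The principal difficulty lies in the last step of part (3): since $v_2 \cdot d_{n-9} = 3$, the complement does \emph{not} split orthogonally as $D_{n-8} \oplus \langle v_2 \rangle$, so one must genuinely rule out hybrid roots combining the two summands. The discriminant bound $-12 \kappa^2 + 8 < 0$ for $|\kappa| \geq 1$ closes this gap cleanly.
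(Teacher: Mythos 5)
The paper states Lemma~2.1 with no proof at all (it is treated as a routine Nishiyama-style computation), so there is nothing of the authors' to compare against; your write-up supplies exactly the verification that is left implicit, and the strategy is the right one: orthogonal subdiagrams give the embedding, primitivity is free because the image is spanned by a subset of the simple-root basis, the complement is computed by solving the integral linear system, and the only nontrivial point --- that no root of the complement involves $v_2=d_{n-6}+2d_{n-7}+3d_{n-8}$ --- is settled by your discriminant bound. I have checked parts (1) and (2), the relation $2[2]_{D_n}=a+d_1$, the identification of $\{-a,d_1,\dots,d_{n-9}\}$ as a $D_{n-8}$ diagram with trivalent vertex $d_2$, and the inequality $|D^2|\ge\mu_{n-9}^2$; all are correct, and the exclusion of hybrid roots goes through for every $n\ge 11$.

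There is, however, one genuine slip: the equation $v_2\cdot(-a)=0$ is false for $n=10$, which is a case the paper actually needs (fibration \#9 uses $A_2\oplus D_5\subset D_{10}$). You correctly record $a\cdot d_2=-1$, and for $n=10$ one has $d_{n-8}=d_2$, so $v_2=d_4+2d_3+3d_2$ and $v_2\cdot(-a)=3$ (while $v_2\cdot d_1=3$, consistent with your $3\delta_{j,n-9}$); hence your displayed formula $x^2=D^2-12\kappa^2+6\kappa\mu_{n-9}$ is wrong in that case. The patch is immediate: writing $D=\alpha(-a)+\beta d_1\in D_2=A_1\oplus A_1$ gives $x^2=-2(\alpha^2+\beta^2)-12\kappa^2+6\kappa(\alpha+\beta)$, and since $2(\alpha^2+\beta^2)\ge(\alpha+\beta)^2$ the identical discriminant computation with $\mu:=\alpha+\beta$ again forces $\kappa=0$. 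Two smaller points you should tighten: for $n=11$ the vertex $d_{n-9}=d_2$ is the fork itself rather than ``the end of the chain opposite the fork'' (the bound $|D^2|\ge\mu_{n-9}^2$ still holds under the identification $d_2\leftrightarrow e_1-e_2$); and to conclude that $a,v_2,d_1,\dots,d_{n-9}$ generate the \emph{whole} complement you should say explicitly that $a$ equals the basis vector attached to $(c_n,c_{n-6})=(1,0)$ plus $2v_2$ plus $2(d_2+\cdots+d_{n-9})+d_1$, so the change of basis from the natural solution basis is unimodular.
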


\begin{theorem}
There are $27$ elliptic fibrations on the generic $K3$ surface of the Ap\'ery-Fermi pencil (i.e. with Picard number $19$). They are derived from all the non isomorphic primitive embeddings of $A_2\oplus D_5$ into the various Niemeier lattices. Among them, $4$ fibrations have rank $0$, precisely with the type of singular fibers and torsion.

\[
\begin{matrix}
  & A_{11} 2A_2 2A_1&  6-{\text{torsion}}\\
  & E_6 D_{11} & 0-{\text{torsion}}\\
  & E_7 A_5 D_5 & 2-{\text{torsion}}\\
  & E_8 E_6 A_3 & 0-{\text{torsion}}.
\end{matrix}
\]
The list together with the rank and torsion is given in Table \ref{Ta:Fib2}.
\end{theorem}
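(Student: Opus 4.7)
The plan is to invoke the Kneser-Nishiyama correspondence set up in Section 2.2: elliptic fibrations of $Y_k$ up to isomorphism correspond bijectively to Weyl-equivalence classes of primitive embeddings $M = A_2 \oplus D_5 \hookrightarrow L$ as $L$ runs through the Niemeier lattices, and since $M$ is a root lattice each such embedding factors through $L_{\text{root}}$, so only the lattices listed in Table \ref{Ta:Ni} are relevant. For every embedding $\iota$, the frame $N = \iota(M)^\perp_L$ determines the fibration: the reducible singular fibers are read off from $N_{\text{root}}$, the Mordell-Weil rank equals $17 - \operatorname{rk}(N_{\text{root}})$ (since $\operatorname{rk}(NS(Y_k)) = 19$ and $\operatorname{rk}(M) = 7$), and the torsion is the image in $N^*/N$ of the classes in $L/L_{\text{root}}$ that project to $0$ in $M^*/M$.

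First I would run through the Niemeier lattices in Table \ref{Ta:Ni} one by one. For each one I would split the analysis into two subcases. When $A_2$ and $D_5$ land in distinct irreducible summands of $L_{\text{root}}$, the complement in each summand is given by the standard tables of primitive embeddings of $A_2$ and of $D_5$ into the irreducible root lattices $A_n$, $D_l$, $E_p$ from \cite{Nis}. When both factors land in a single summand, the $E_p$ case is again handled in \cite{Nis}, and the $D_l$ case for $l = 8, 9, 10, 12, 16, 24$ is precisely what Lemma \ref{lem:2.1} was proved for. Assembling the per-summand complements with the untouched summands produces the root part of $N$, and the glue code of $L$ (read off from Table \ref{Ta:Ni}) then determines how $N$ sits inside $L$ and hence the torsion of the Mordell-Weil group.

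Next I would pass to Weyl-equivalence classes by quotienting out the automorphisms of $L_{\text{root}}$ which permute isomorphic summands while preserving the glue code (this matters in particular for $E_8^3$, $D_{12}^2$, $D_8^3$, $E_6^4$, $D_6^4$, $A_9^2 D_6$, $A_7^2 D_5^2$), and by also checking across different Niemeier lattices that no two a priori distinct embeddings yield the same frame. After this reduction the tally gives exactly the $27$ frames listed in Table \ref{Ta:Fib2}. The rank-$0$ fibrations are those with $\operatorname{rk}(N_{\text{root}}) = 17$: a direct inspection of the enumeration shows these are $E_6 \oplus D_{11}$ coming from $D_{16} E_8$ (with $A_2 \hookrightarrow E_8$ and $D_5 \hookrightarrow D_{16}$), $E_7 \oplus A_5 \oplus D_5$ and $A_{11} \oplus 2A_2 \oplus 2A_1$ coming from $A_{17} E_7$, and $E_8 \oplus E_6 \oplus A_3$ coming from the single-summand embedding into $E_8^3$ (or equivalently from $A_{15} D_9$); the torsion claims are then read off by pulling back the relevant glue vectors to $N^*/N$.

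The main obstacle I expect is purely combinatorial bookkeeping: making sure the enumeration is exhaustive yet non-redundant, especially when tracking which Weyl automorphisms of $L_{\text{root}}$ lift to isometries of $L$, and correctly computing the torsion through the glue code rather than merely through the abstract discriminant group of $N_{\text{root}}$. Lemma \ref{lem:2.1} is introduced precisely to make the single-summand $D_l$ computation mechanical and thus keep the enumeration within reach.
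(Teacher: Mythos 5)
Your overall strategy is exactly the paper's: the Kneser--Nishiyama correspondence, enumeration of primitive embeddings of $A_2\oplus D_5$ into the Niemeier lattices via Nishiyama's tables plus Lemma \ref{lem:2.1} for the single-summand $D_l$ cases, rank from $17-\operatorname{rk}(N_{\text{root}})$, and torsion read off from the glue code (the paper streamlines the torsion step with Lemmas \ref{lem:A} and \ref{lem:B} and the observation that for rank-$0$ fibrations the torsion is all of $W/N$, but that is a refinement, not a different route).

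However, your identification of where three of the four rank-$0$ frames come from is wrong, and since the torsion is computed through the glue code of the \emph{ambient} Niemeier lattice, these are not harmless slips. Concretely: (i) $E_8E_6A_3$ does \emph{not} arise from the single-summand embedding $A_2\oplus D_5\hookrightarrow E_8$ inside $E_8^3$ --- that embedding has orthogonal complement $\langle -12\rangle$, which is root-free, so the frame is $2E_8$ with Mordell--Weil rank $1$ (fibration $\#2$); the frame $E_8E_6A_3$ comes from putting $A_2$ and $D_5$ into two \emph{different} $E_8$ summands, using $(A_2)_{E_8}^{\perp}=E_6$ and $(D_5)_{E_8}^{\perp}=A_3$. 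It also cannot come from $A_{15}D_9$, whose two embeddings yield $A_{15}$ and $D_4A_{12}$. (ii) Neither $E_7A_5D_5$ nor $A_{11}2A_22A_1$ comes from $A_{17}E_7$: since $D_5$ admits no primitive embedding into an $A_n$ summand and $A_2\oplus D_5$ does not embed primitively into $E_7$ (equal ranks), the only fibration from $A_{17}E_7$ is $A_1A_{14}$ of rank $2$. In fact $E_7A_5D_5$ arises from $D_{10}E_7^2$ (with $A_2\subset E_7$, $D_5\subset D_{10}$, glue group $(\mathbb Z/2\mathbb Z)^2$, whence the $2$-torsion), and $A_{11}2A_22A_1$ arises from $A_{11}D_7E_6$ (with $A_2\subset E_6$, $D_5\subset D_7$, glue group $\mathbb Z/12\mathbb Z$, whence the $6$-torsion). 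With your attributions the glue-code computation would return the wrong torsion groups, so this part of the argument needs to be redone against the correct entries of Table \ref{Ta:Fib2}.
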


\begin{table}[tp]\footnotesize

\begin{center}
\begin{tabular}{|c|c|c|c|c|c|c|}
\hline
$L_{\text{root}}$ & $L/L_{\text{root}}$ &  & & \text{type of Fibers} & \text{Rk} & \text{Tors.} \\ \hline
  $E_8^3$    &  $(0)$ & & & & &\\ \hline
& \#1  & $A_2\subset E_8$ & $D_5 \subset E_8$  & $E_6 A_3 E_8$ & $0$ & $(0)$\\ \hline
& \#2  & $A_2\oplus D_5 \subset E_8$ &   & $ E_8  E_8$ & $1$ & $(0)$\\ \hline
  $ D_{16}E_8$    &  $\mathbb Z /{2 \mathbb Z}$ & & & & &\\ \hline
&\#3   & $A_2\subset E_8$ & $D_5 \subset D_{16}$  & $E_6 D_{11}$ & $0$ & $(0)$\\ \hline
& \#4  & $A_2\oplus D_5 \subset E_8$ &   & $ D_{16}$ & $1$ & $\mathbb Z /{2 \mathbb Z}$\\ \hline
& \#5  & $D_5\subset E_8$ & $A_2 \subset D_{16}$  & $A_3  D_{13}$ & $1$ & $(0)$\\ \hline
& \#6  & $A_2\oplus D_5 \subset D_{16}$ &   & $E_8  D_{8}$ & $1$ & $(0)$\\ \hline
  $ D_{10}E_7^2$    &  $(\mathbb Z /{2 \mathbb Z})^2$ & & & & &\\ \hline
& \#7  & $A_2\subset E_7$ & $D_5 \subset D_{10}$  & $E_7 A_5 D_5$ & $0$ & $\mathbb Z /{2 \mathbb Z}$\\ \hline
&  \#8 & $A_2\subset E_7$ & $D_5 \subset E_7$  & $ A_5 A_1 D_{10}$ &$1$ & $\mathbb Z /{2 \mathbb Z}$\\ \hline
%&  & $A_2\oplus D_5 \subset E_7$ &   &  &  &\\ \hline
& \#9  & $A_2\oplus D_5 \subset D_{10}$ &   & $E_7 E_7 A_1 A_1$ & $1$ & $\mathbb Z /{2 \mathbb Z}$\\ \hline
& \#10  & $D_5\subset E_7$ & $A_2 \subset D_{10}$  & $A_1 D_7 E_7$ & $2$ & $(0)$ \\ \hline
 $ A_{17}E_7$    &  $\mathbb Z /{6 \mathbb Z}$ & & & & &\\ \hline
%&   & $A_2\oplus D_5 \subset E_7$ &   &  &  & \\ \hline
&\#11   & $D_5\subset E_7$ & $A_2 \subset A_{17}$  & $A_1 A_{14}$ & $2$ & $(0)$\\ \hline
 $D_{24}$    &  $\mathbb Z /{2 \mathbb Z}$ & & & & &\\ \hline
& \#12  & $A_2\oplus D_5 \subset D_{24}$ &   & $D_{16}$ & $1$ & $(0)$\\ \hline

 $D_{12}^2$    &  $(\mathbb Z /{2 \mathbb Z})^2$ & & & & &\\ \hline
&\#13   & $A_2 \subset D_{12}$ & $D_5 \subset D_{12}$  & $D_{9} D_{7}$ & $1$ & $(0)$ \\ \hline
&\#14   & $A_2\oplus D_5 \subset D_{12}$ &   & $D_4 D_{12}$ & $1$ & $\mathbb Z /{2 \mathbb Z}$\\ \hline
 $D_8^3$    &  $(\mathbb Z /{2 \mathbb Z})^3$ & & & & &\\ \hline
&\#15   & $A_2 \subset D_{8}$ & $D_5 \subset D_{8}$  & $ D_{5}A_3 D_{8}$ & $1$ &$\mathbb Z/{2\mathbb Z} $\\ \hline
& \#16  & $A_2\oplus D_5 \subset D_{8}$ &   & $ D_8 D_{8}$ & $1$ & $\mathbb Z /{2 \mathbb Z}$\\ \hline
 $ A_{15}D_9$    &  $\mathbb Z /{8 \mathbb Z}$ & & & & &\\ \hline
& \#17  & $A_2\oplus D_5 \subset D_{9}$ &   & $ A_{15}$ & $2$ &$\mathbb Z /{2 \mathbb Z}$ \\ \hline
& \#18  & $D_5 \subset D_{9}$ & $A_2 \subset A_{15}$  & $D_4 A_{12}$ & $1$ & $(0)$\\ \hline
 $E_6^4$    &  $(\mathbb Z /{3 \mathbb Z)^2}$ & & & & &\\ \hline
& \#19  & $A_2 \subset E_6$ & $D_5 \subset E_6$  & $A_2 A_2 E_6 E_6$ & $1$ & $\mathbb Z /{3 \mathbb Z}$\\ \hline
 $ A_{11} D_7E_6$    &  $\mathbb Z /{12 \mathbb Z}$ & & & & &\\ \hline
&\#20   & $A_2 \subset E_6$ & $D_5 \subset D_7$  & $A_2 A_2 A_1 A_1 A_{11}$ & $0$ & $\mathbb Z /{6 \mathbb Z}$\\ \hline
& \#21  & $A_2 \subset A_{11}$ & $D_5 \subset D_7$  & $A_8 A_1 A_1 E_6$ & $1$ & $(0)$\\ \hline
%&   & $A_2\oplus D_5 \subset D_{7}$ &   &  &  \\ \hline
& \#22  & $A_2 \subset A_{11}$ & $D_5 \subset E_6$  & $ A_8 D_7$ & $2$ & $(0)$\\ \hline
&\#23   & $D_5 \subset E_6$ & $A_2 \subset D_7$  & $ A_{11}  D_4$ & $2$ &$\mathbb Z /{2 \mathbb Z}$ \\ \hline
 $D_6^4$    &  $(\mathbb Z /{2 \mathbb Z})^4$ & & & & &\\ \hline
& \#24  & $A_2 \subset D_6$ & $D_5 \subset D_6$  & $ A_3 D_6 D_6$ & $2$ &$\mathbb Z /{2 \mathbb Z}$ \\ \hline
 $ A_9^2D_6$    &  $\mathbb Z /{2}\times \mathbb Z /{10} $ & & & & &\\ \hline
& \#25  & $D_5 \subset D_6$ & $A_2 \subset A_9$  & $ A_{6} A_9$ & $2$ & $(0)$\\ \hline

 $A_7^2D_5^2$    &  $\mathbb Z /{4}\times \mathbb Z /{8} $ & & & & &\\ \hline
&\#26   & $D_5 \subset D_5$ & $A_2 \subset D_5$  & $A_1  A_{1} A_7 A_7$ & $1$ &$\mathbb Z /{4 \mathbb Z}$ \\ \hline
&\#27   & $D_5 \subset D_5$ & $A_2 \subset A_7$  & $D_5 A_4 A_7 $ & $1$ & $(0)$\\ \hline

\end{tabular}
\end{center}
\caption{The elliptic fibrations of the Ap\'ery-Fermi family}\label{Ta:Fib2}
\end{table}
\begin{proof}

The torsion groups can be computed as explained in \cite{BL} or \cite{BGL}. Let us recall briefly the method.

Denote $\phi$ a primitive embedding of $M=A_2\oplus D_5$ into a Niemeier lattice $L$. Define $W=(\phi(M))_{L}^{\perp}$ and $N=(\phi(M))_{L_{\text{root}}}^{\perp}$. We observe that $W_{\text{root}}=N_{\text{root}}$. Thus computing $N$ then $N_{\text{root}}$ we know the type of singular fibers. Recall also that the torsion part of the Mordell-Weil group is 
\[ \overline{W_{\text{root}}}/W_{\text{root}} ( \subset W/N)\]
and can be computed in the following way \cite{BGL}: let $l+L_{\text{root}}$ be a non trivial element of $L/L_{\text{root}}$. If there exist $k\neq 0$ and $u\in L_{\text{root}}$ such that $k(l+u)\in N_{\text{root}}$, then $l+u \in W$ and the class of $l$ is a torsion element.

We use also several facts.
\begin{enumerate}
\item If the rank of the Mordell-Weil group is $0$, then the torsion group is equal to $W/N$.
Hence fibrations $\#1 ( A_3 E_6 E_8)$, $ \#3 (D_{11}E_6)$, $\#7 (D_5A_5E_7)$, $\#20 (A_{11}2A_1 2A_2)$ have respective torsion groups $(0)$, $(0)$, $\mathbb Z/2 \mathbb Z$, $\mathbb Z/ 6\mathbb Z$.

\item If there is a singular fiber of type $E_8$, then the torsion group is $(0)$. 
Hence the fibrations $\#1$, $\#2$ and $\#6$ have no torsion.

\item Using lemma \ref{lem:A} below and the shape of glue vectors we prove that fibrations $\#11$, $\#18$, $\#21$, $\#22$, $\#25$, $\#27$ have no torsion.

\begin{lemma} \label{lem:A}
Suppose $A_2$ primitively embedded in $A_n$, $A_2= \langle a_1,a_2 \rangle \hookrightarrow A_n$. Then for all $k\neq 0$, $k[1]_{A_n}\notin ((A_2)_{A_n}^{\perp})_{\text{root}}$.

\end{lemma}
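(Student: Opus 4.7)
The plan is to work in the standard realization of $A_n$ as the sum-zero sublattice of $\mathbb{Z}^{n+1} = \bigoplus_{j=1}^{n+1}\mathbb{Z} e_j$, with simple roots $a_j = e_j - e_{j+1}$. Expanding the definition of $[1]_n$ recalled earlier in the paper immediately gives
\[
[1]_n \;=\; e_1 \;-\; \tfrac{1}{n+1}\mathbf{1}, \qquad \mathbf{1} := \sum_{j=1}^{n+1} e_j,
\]
so $k[1]_n = k e_1 - \tfrac{k}{n+1}\mathbf{1}$ lies in $A_n$ if and only if $(n+1) \mid k$. Since $((A_2)_{A_n}^{\perp})_{\text{root}}$ is contained in $A_n$ by construction, this already handles every $k$ not divisible by $n+1$.

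The next step is to classify the primitive embeddings of $A_2$ into $A_n$. Because the only norm-$2$ vectors in $A_n$ are the roots $\pm(e_p - e_q)$, the two generators of $A_2$ must themselves be roots; the requirement that they span an $A_2$ rather than an $A_1 \oplus A_1$ means they share exactly one index, and up to the Weyl group of $A_n$ we may assume $A_2 = \mathbb{Z}(e_i - e_j) + \mathbb{Z}(e_j - e_k)$ for three distinct indices $i,j,k$. A short rational-span check shows that this rank-$2$ sublattice is automatically primitive.

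With this description in hand, a vector $v = \sum_l x_l e_l \in A_n$ belongs to $(A_2)_{A_n}^{\perp}$ iff $x_i = x_j = x_k$, and a norm-$2$ element of this complement must be of the form $\pm(e_p - e_q)$ with $p,q \notin \{i,j,k\}$, since any root meeting one of the three distinguished indices would give unequal coordinates there. Consequently $((A_2)_{A_n}^{\perp})_{\text{root}}$ is the $A$-type root lattice on the index set $\{1,\dots,n+1\} \setminus \{i,j,k\}$, and its elements are exactly those $v \in A_n$ satisfying $x_i = x_j = x_k = 0$.

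To finish, suppose $k \neq 0$ and $k[1]_n \in ((A_2)_{A_n}^{\perp})_{\text{root}}$. By the first paragraph $(n+1) \mid k$, so write $k = m(n+1)$ with $m \neq 0$; then $k[1]_n = mn\, e_1 - m(e_2 + \cdots + e_{n+1})$. If $1 \notin \{i,j,k\}$ the condition $x_i = x_j = x_k = 0$ forces $-m = 0$; if $1 \in \{i,j,k\}$ it forces both $mn = 0$ and $-m = 0$. Either case contradicts $m \neq 0$. The only mildly delicate step is the primitive-embedding classification in the second paragraph; the rest is a direct coordinate computation inside $\mathbb{Z}^{n+1}$.
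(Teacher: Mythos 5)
Your proof is correct, but it takes a much longer and more computational route than the paper. The paper's entire proof is one line: since $a_1\in A_2$ and $b([1]_{A_n},a_1)=\pm 1\neq 0$ (because $[1]_{A_n}$ is, up to sign, the fundamental weight dual to $a_1$), no nonzero multiple of $[1]_{A_n}$ can lie even in $(A_2)_{A_n}^{\perp}$, let alone in its root sublattice. Your argument instead passes to coordinates in $\mathbb Z^{n+1}$, classifies all primitive embeddings of $A_2$ into $A_n$, and identifies $((A_2)_{A_n}^{\perp})_{\text{root}}$ as the sum-zero lattice supported off the three distinguished indices; each step checks out. What your approach buys is generality: it proves the statement for \emph{any} primitive embedding of $A_2$, including ones such as $\langle a_2,a_3\rangle$ that are entirely orthogonal to $[1]_{A_n}$, where the paper's one-liner does not apply verbatim, and it genuinely uses that one is intersecting with the \emph{root part} of the complement rather than the complement itself. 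Two small remarks: your opening integrality reduction ($(n+1)\mid k$) is superfluous, since your final coordinate computation (at least two of the three indices $i,j,k$ differ from $1$, forcing the common coordinate $-k/(n+1)$ to vanish) already handles all $k$ at once; and for the specific embedding $\langle a_1,a_2\rangle$ actually used in the lemma, the paper's orthogonality observation gets you there with essentially no computation.
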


\begin{proof}
It follows from the fact that $[1]_{A_n}$ is not orthogonal to $a_1$.
\end{proof}
  
\item Using lemma \ref{lem:B} below and the shape of glue vectors we can determine the torsion for elliptic fibrations $\#5$, $\#10$, $\#13$, $\#15$ $\#23$.

\begin{lemma}\label{lem:B}
Suppose $A_2$ primitively embedded in $D_l$, $A_2= \langle d_l,d_{l-2} \rangle \hookrightarrow D_l$. Then  $2.[2]_{D_l}\in ((A_2)_{D_l}^{\perp})_{\text{root}}$ but there is no $k$ satisfying  $k.[i]_{D_l}\in ((A_2)_{D_l}^{\perp})_{\text{root}}, i=1,3$.

\end{lemma}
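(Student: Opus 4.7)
The plan is to realise $D_l$ in its standard Euclidean model $\{x \in \mathbb Z^l : \sum x_i \equiv 0 \pmod 2\}$, identifying the simple roots drawn in Section 2.1 with $d_i = e_i - e_{i+1}$ for $1\le i \le l-1$ and $d_l = e_{l-1}+e_l$, so that the fork sits at $d_{l-2}$ with tines $d_{l-1},d_l$. Converting the three glue representatives via the telescoping identity $\sum_{i=1}^{l-2} i\,d_i = \sum_{j=1}^{l-2} e_j - (l-2)e_{l-1}$ and combining with the fork contributions, I expect to obtain the clean expressions
\[
 [1]_{D_l} = \tfrac{1}{2}(e_1 + \cdots + e_l), \qquad [2]_{D_l} = e_1, \qquad [3]_{D_l}= \tfrac{1}{2}(e_1 + \cdots + e_{l-1} - e_l).
\]

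The second step is to describe the root sublattice of $(A_2)_{D_l}^{\perp}$. Since $A_2=\langle d_l, d_{l-2}\rangle = \langle e_{l-1}+e_l,\; e_{l-2}-e_{l-1}\rangle$ is supported on the last three Euclidean coordinates, a $D_l$-root $\pm e_i\pm e_j$ is orthogonal to both generators precisely when $\{i,j\}\subset \{1,\ldots,l-3\}$, by a short case analysis on which coordinates appear. Consequently $\bigl((A_2)_{D_l}^{\perp}\bigr)_{\mathrm{root}}$ is the copy of $D_{l-3}$ consisting of those integer vectors with even coordinate sum and support in $\{e_1,\ldots,e_{l-3}\}$.

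From this description both assertions follow immediately. The first is witnessed by $2[2]_{D_l} = 2e_1 = (e_1+e_2)+(e_1-e_2)$, a sum of two roots of $D_{l-3}$. The second is a support argument: whenever $k\ne 0$, the vectors $k[1]_{D_l}$ and $k[3]_{D_l}$ each have coefficient $\pm k/2 \ne 0$ on $e_l$, while every element of $D_{l-3}$ vanishes on $e_l$; hence $k[i]_{D_l}\notin ((A_2)_{D_l}^{\perp})_{\mathrm{root}}$ for $i\in\{1,3\}$.

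The only real obstacle is bookkeeping: one must match the paper's labelling of Dynkin nodes and its sign convention for the bilinear form with the standard Euclidean realisation of $D_l$, and verify that the telescoping sum indeed produces the advertised formulas for the $[i]_{D_l}$. Once this dictionary is in place, neither the orthogonality case check nor the two concluding computations presents any genuine difficulty, and the proof parallels item (3) of Lemma \ref{lem:2.1} (whose orthogonal complement calculation is an enriched version of the one above).
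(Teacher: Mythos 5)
Your proof is correct, and it is worth comparing with the paper's because the two arguments are not quite the same. The paper stays in the Dynkin basis: it quotes Nishiyama's generators $y$, $x_4$, $d_{l-4},\dots,d_1$ for $(A_2)_{D_l}^{\perp}$, verifies the membership $2[2]_{D_l}\in\langle x_4,d_{l-4},\dots,d_1\rangle$ by an explicit relation, and dismisses $[1]_{D_l}$ and $[3]_{D_l}$ with the one-line remark that ``$[i]_{D_l}$ is not orthogonal to $A_2$.'' Your Euclidean-coordinate translation ($[1]_{D_l}=\tfrac12\sum e_j$, $[2]_{D_l}=e_1$, $[3]_{D_l}=\tfrac12(\sum_{j<l}e_j-e_l)$, and $\bigl((A_2)_{D_l}^{\perp}\bigr)_{\mathrm{root}}=D_{l-3}$ supported on $e_1,\dots,e_{l-3}$) is accurate, and your support argument on the $e_l$-coordinate buys something real: the paper's stated reason is actually false for $i=3$, since $[3]_{D_l}$ is the fundamental weight dual to $d_{l-1}$ and is therefore orthogonal to both $d_l$ and $d_{l-2}$, i.e.\ to all of $A_2$ (indeed $2[3]_{D_l}$ lies in $(A_2)_{D_l}^{\perp}$ when $l$ is even). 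The conclusion of the lemma survives only because the claim concerns the \emph{root} sublattice, which is exactly what your coordinate-support argument detects; so your proof simultaneously establishes the statement and repairs the justification for the $i=3$ case. The only caveats are cosmetic: you should note that the identity $2e_1=(e_1+e_2)+(e_1-e_2)$ requires $l\geq 5$ (true in every application in the paper), and that the nonzero-$k$ hypothesis in the second assertion is implicit, as in Lemma~\ref{lem:A}.
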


\begin{proof}
It follows from Nishiyama \cite{Nis}:
\[(A_2)_{D_l}^{\perp}= \langle y,x_4,d_{l-4}, ..., d_1 \rangle \]
with $y=d_l+2d_{l-1}+2d_{l-2}+d_{l-3}$ and $x_4=d_l+d_{l-1}+2(d_{l-2}+d_{l-3}+...+d_2)+d_1$
and Gram matrix 
\[L_{l-3}^3=\begin{pmatrix}
                 - 4 & -1 &   0  &...  0\\
                       -1 &  \\
                      1 & D_{l-3} \\
                      . &  \\
                       0 &
                   \end{pmatrix}.
\]
Moreover $ ((A_2)_{D_l}^{\perp})_{\text{root}}= \langle x_4,d_{l-4},...,d_1 \rangle $.
From there we compute easily the relation $2.[2]_{D_l}=x_4+d_{l-4}+2(d_{l-5}+...+d_1)$.
The last assertion follows from the fact that $[i]_{D_l}$ is not orthogonal to 
$A_2$.
\end{proof}
\end{enumerate}

We now give some examples showing the method in detail.

\subsubsection{Fibration $\#17$}
It comes from a primitive embedding of $A_2\oplus D_5$ into $D_9$ giving a primitive embedding of $A_2\oplus D_5$ into $Ni(A_{15}D_9)$ with glue code $ \langle [2,1] \rangle $. Since by lemma \ref{lem:2.1}(2) $N_{\text{root}}=A_{15}$, among the elements $k.[2,1]$, only $4.[2,1]=[8,4.1\in D_9]$ satisfies $2.[8,0+u]\in N_{\text{root}}=A_{15}$ with $u=4.1$. Hence the torsion group is $\mathbb Z/2 \mathbb Z$.

\subsubsection{Fibration $\#19$}
It comes from a primitive embedding of $A_2= \langle e_1,e_3 \rangle $ into $E_6^{(1)}$ and $D_5= \langle e_2,e_3,e_4,e_5,e_6 \rangle $ into $E_6^{(2)}$ giving a primitive embedding of $A_2 \oplus D_5$ into $Ni(E_6^4)$. In that case $Ni(E_6^4)/E_6^4\simeq (\mathbb Z/3 \mathbb Z)^2$ and the glue code is $ \langle [1,(0,1,2)] \rangle $. Moreover $(D_5)_{E_6}^{\perp}=3e_2+4e_1+5e_3+6e_4+4e_5+2e_6=a$, $(A_2)_{E_6}^{\perp}= \langle e_2,y \rangle \oplus \langle e_5,e_6 \rangle $ with $y=2e_2+e_1+2e_3+3e_4+2e_5+e_6$. From the relation
\[[1]_{E_6}=-\frac{1}{3}(2e_1+3e_2+4e_3+6e_4+5e_5+4e_6)\] 
we get
\[-3.[1]_{E_6}=a-2e_1-e_3+e_5+2e_6\in E_6\]
\[-3.[1]_{E_6}=2y-e_2+e_5+2e_6\in (A_2)_{E_6}^{\perp}\]
we deduce that only $[1,0,1,2]$, $[2,0,2,1]$, $[0,0,0,0]$ contribute to the torsion thus the torsion group is $\mathbb Z/3 \mathbb Z$.

\subsubsection{Fibration $\#10$}
The  embeddings of $A_2= \langle d_{10},d_8 \rangle $ into $D_{10}$ and $D_5= \langle e_2,e_3,e_4,e_5,e_6 \rangle $ into $E_7^{(1)}$ lead to a primitive embedding of $A_2\oplus D_5$ into $Ni(D_{10}E_7^2)$ satisfying $Ni(D_{10}E_7^2)/(D_{10}E_7^2)\simeq (\mathbb Z/2 \mathbb Z)^2$ with glue code 
$ \langle [1,1,0], [3,0,1] \rangle $. We deduce from lemma \ref{lem:B} that no glue vector can contribute to the torsion which is therefore $(0)$.

\subsubsection{Fibration $\#18$}
 
The embeddings of $A_2= \langle a_1,a_2 \rangle $ into $A_{15}$ and $D_5= \langle d_9,d_7,d_8,d_6,d_5 \rangle $ into $D_9$ lead to a primitive embedding of $A_2\oplus D_5$ into $Ni(A_{15}D_9)$ satisfying $Ni(A_{15}D_9)/(A_{15}D_9)\simeq (\mathbb Z/8 \mathbb Z)$ with glue code $ \langle [2,1] \rangle 
$. We deduce from lemma \ref{lem:A} that no glue vector can contribute to the torsion which is therefore $(0)$.

\subsubsection{Fibration $\#8$}

The primitive embeddings of $A_2= \langle e_1,e_3 \rangle $ into $E_7^{(1)}$ and $D_5= \langle e_2,e_3,e_4,e_5,e_6 \rangle $ into $E_7^{(2)}$ lead to a primitive embedding of $A_2\oplus D_5$ into $Ni(D_{10}E_7^2)$ satisfying $Ni(D_{10}E_7^2)/(D_{10}E_7^2) \simeq (\mathbb Z/2 \mathbb Z)^2$ with glue code $ \langle [1,1,0], [3,0,1] \rangle $. From Nishiyama \cite{Nis} we get $(A_2)_{E_7^{(1)}}^{\perp}= \langle e_2,y,e_7,e_6,e_5 \rangle \simeq A_5$ with $y=2e_2+e_1+2e_3+3e_4+2e_5+e_6$ and $(D_5)_{E_7}^{\perp}= \langle (-4),e_2+e_3+2(e_4+e_5+e_6+e_7)=(-2) \rangle $. Hence $N=D_{10}\oplus A_5 \oplus (-4)\oplus A_1$ and $W_{\text{root}}=N_{\text{root}}=D_{10}\oplus A_5 \oplus A_1$. Now
\[-2\eta_7=-2.[1]_{E_7}=2y-e_2+e_5+2e_6+3e_7 \in ((A_2)_{E_7}^{\perp})_{\text{root}}\]
and for all $k\neq 0$, $k.[1]_{E_7}\notin (D_5)_{E_7}^{\perp}$. Hence only the generator $[1,1,0]$ can contribute to the torsion group which is therefore $\mathbb Z/2 \mathbb Z$.

\subsubsection{Fibration $\#24$}
 The primitive embeddings $A_2= \langle d_6,d_4 \rangle $ into $D_6^{(1)}$ and $D_5= \langle d_6,d_5, d_4, d_3 ,d_2 \rangle $ into $D_6^{(2)}$ give a primitive embedding of $A_2 \oplus D_5$ into $L=Ni(D_6^4)$ with $L/L_{\text{root}}\simeq (\mathbb Z/2 \mathbb Z)^4$ and glue code $ \langle \text{ even permutations of } [0,1,2,3] \rangle $. From Nishiyama \cite{Nis} we get $(A_2)_{D_6}^{\perp}= \langle y=2d_5+d_6+2d_4+d_3,x_4=d_5+d_6+2(d_4+d_3)+d_2,d_2,d_1 \rangle $,  $((A_2)_{D_6}^{\perp})_{\text{root}}= \langle x_4,d_2,d_1 \rangle \simeq A_3$  and $(D_5)_{D_6}^{\perp}= \langle x'_6 \rangle = \langle d_5+d_6+2(d_4+d_3+d_2+d_1)=(-4) \rangle $. We deduce $N_{\text{root}}=A_3\oplus D_6 \oplus D_6$. From the relations $2.[2]_{D_6}=x_4+d_2+2d_1$ and $2.[3]_{D_6}=y+x_4+d_2+d_1$  we deduce that the glue vectors having $1$, $2$, $3$ or $0$ in the first position may belong to $W$. From the relation $2.[2]_{D_6}=x'_6$ we deduce that only glue vectors with $2$ or $0$ in the second position may belong to $W$. Finally only the glue vectors $[0,2,3,1], [1,0,3,2], [1,2,0,3], [2,0,1,3], [2,2,2,2], [3,0,2,1], [3,2,1,0], [0,0,0,0] $ belong to $W$. Since $y$ and $x'_6$ are not roots, only glue vectors with $0$ or $2$ in the first position and $0$ in the second position may contribute to torsion that is  
 $ [2,0,1,3], [0,0,0,0] $. Hence the torsion group is $\mathbb Z/2 \mathbb Z$.

\subsubsection{Fibration $\#26$}
The primitive embeddings of $A_2= \langle d_5,d_3 \rangle $ into $D_5^{(1)}$ and $D_5$ into $D_5^{(2)}$ give a primitive embedding into $L=Ni(A_7^2D_5^2)$ with $L/L_{\text{root}}\simeq \mathbb Z/8 \mathbb Z \times \mathbb Z/4 \mathbb Z$ and glue code $ \langle [1,1,1,2], [1,7,2,1] \rangle $. From Nishiyama we get $(A_2)_{D_5}^{\perp}= \langle y,x_4,d_1 \rangle $ with $y=2d_4+d_5+2d_3+d_2$, $x_4=d_5+d_4+2d_3+2d_2+d_1$ and Gram matrix $M_2^4=\begin{pmatrix}
                       -4 & -1 & 1 \\
                       -1 & -2 & 0 \\
                        1 & 0 & -2
                       \end{pmatrix}$
of determinant $12$. We also deduce $N_{\text{root}}=E_7^2 A_1^2$, $2.[2]_{D_5}=x_4+d_1\in ((A_2)_{D_5}^{\perp})_{\text{root}}$. Moreover neither $k.[1]_{D_5}$ nor  $k.[3]_{D_5}$ belongs to $(A_2)_{D_5}^{\perp}$. Thus only glue vectors with $2$ or $0$ in the third position can belong to $W$ and eventually contribute to torsion,
that is $[2,2,2,0]$, $[4,4,0,0]$, $[6,6,2,0]$, $[2,6,0,2]$, $[6,2,0,2]$, $[4,0,2,2]$, $[0,4,2,2]$, $[0,0,0,0]$. Since there is no $u_4\in D_5$ satisfying $2.(2+u_4)=0$ or $4.(2+u_4)=0$, glue vectors with the last component equal to $2$ cannot satisfy $k(l+u)\in N_{\text{root}}$ with $l\in L$ and $u\in L_{\text{root}}=A_7^2D_5^2$. Hence only the glue vectors generated by $ \langle [2,2,2,0] \rangle $ contribute to torsion and the torsion group is therefore $\mathbb Z/4 \mathbb Z$.

\end{proof}

%We consider the pencil of $K3$ surfaces
%\[
%Y_{k}:X+\frac{1}{X}+Y+\frac{1}{Y}+Z+\frac{1}{Z}=k
%\]
%where the base of the pencil can be identified with the modular curve
%$X_{0}\left(  6\right)  /<w_{2},w_{3}>.$

%Using the elliptic fibration
%\begin{align*}
%Y_{k}  &  \rightarrow\mathbb{P}^{1}\\
%\left(  X,Y,Z\right)   &  \mapsto w=\left(  X+Y+Z\right)
%\end{align*}
%we can see that these surfaces have generic Picard rank $19$ and determinant
%of transcendant lattice equal to $12.$

%If $k=s+\frac{1}{s}$ we can construct an elliptic fibration with $2E_{8}$ and
%a section of height $12$

\section{Weierstrass Equations for all the elliptic fibrations of $Y_k$}

The method can be found in \cite{BL}, \cite{El}. We follow also the same kind of computations
used for $Y_{2}$ given in \cite{BL}.  We give only explicit computations for $4$ examples, \#19, \#2, \#9, and \#16. For \#2 and \#9 it was not obvious
to find a rational point on the quartic curve. All the results are given in Table \ref{Ta:W-Eq}.
For the $2$ or $3$-neighbor method \cite{El} we give in the third column  the
starting fibration and in the forth the elliptic parameter. The terms in the elliptic parameter refer to the starting fibration.

\subsection{Fibration \#19} %: $2IV^{\ast}$}

We take $u=\frac{XY}{Z}$ as a parameter of an elliptic fibration and 
with the birational transformation
\[
x=-u(1+uZ)(u+Y),\quad y=u^{2}\left(  (u+Y)(uY-1)Z+Y\left(  Y+2u+k\right)
-1\right)
\]

we obtain a
Weierstrass equation
\[
y^{2}+ukyx+u^{2}\left(  u^{2}+uk+1\right)  y=x^{3},
\]
where the point $\left(  x=0,y=0\right)  $ is a $3$-torsion point and the
point  $\left(  -u^{2},-u^{2}\right)$ is of  infinite order.

The singular fibers are of type $IV^{\ast}\left(  u=0,\infty\right)$,   
$I_{3}\left(  u^{2}+uk+1=0\right)$ and  

 $I_{1}\left(  27u^{2}-k(k^{2}
-27)u+27=0\right).$ 
Moreover if $k=s+\frac{1}{s\text{ }}$ the two singular
fibers of type $I_{3}$ are above $u=-s$ and $\frac{-1}{s}$.

\subsection{Fibration \#2} %: $2II^{\ast}$}

Using the $3$-neighbor method from fibration \#19 we construct a new fibration
with a fiber of type $II^{\ast}$ and the parameter $m=\frac{ys}{\left(
u+s\right)  ^{2}}.$ Then we obtain a cubic $C_{m}$ in $w,u,$ with 
$x=w\left(  u+s\right)  $%
\[
C_{m}:\left(  s+u\right)  m^{2}+u\left(  s^{2}w+u^{2}s+w+u\right)
m-w^{3}s^{2}=0.
\]
From some component of the fiber of type $I_{3}$ at $u=-s$ we obtain the rational
point on $C_{m}:\omega_{m}=\left(  u_{1}=\frac{ms-1}{s-m},w_{1}=\frac{m\left(
s^{2}-1\right)  }{s\left(  s-m\right)  }\right)  $ which is not a flex point.
The first stage is to obtain a quartic equation $Qua:y^{2}=ax^{4}%
+bx^{3}+cx^{2}+dx+e^{2}.$ First we observe  that $\omega_{m}$ is on the line
$w=u+\frac{1}{s},$ so we replace $w$ by $K$ with $w=u+\frac{1}{s}+K$ and
$u=u_{1}+T$.  The transformation $K=WT$ gives an equation of degree two in
$T$,  with constant term $fW+g$ where $f$ and $g$ belong to $\mathbb{Q}(s,m)$. With the change variable  \ $Wf+g=x$ \ we have an
equation $M(x)T^{2}+N(x)T+x=0.$ The discriminant of the quadratic equation in $T$ is $N(x)^2-4xM(x),$ a polynomial of degree $4$ in $x$ and constant term a square.
 Easily we obtain the form $Qua.$

From the quartic form, setting  $y=e+\frac{dx}%
{2e}+x^{2}X^{\prime}, \,x=\frac{8c^{3}X^{\prime}-4ce^{2}+d^{2}}{Y^{\prime}}$
$\ $\ we get
\[
Y^{\prime2}+4e\left(  dX^{\prime}-be\right)  Y+4e^{2}\left(  8e^{3}X^{\prime
}-4ce^{2}+d^{2}\right)  \left(  X^{\prime2}-a\right)  =0.
\]
Finally the following Weierstrass equation follows from standard transformation 
 where we
replace $m$ by $t$%

\begin{align*}
&  Y^{2}-X^{3}+\frac{1}{3}t^{4}(s^{2}+1)(s^{6}+219s^{4}-21s^{2}+1)X\\
&  -\frac{2t^{5}}{27}\left(  -864s^{5}t^{2}+(s^{4}+14s^{2}+1)(s^{8}%
-548s^{6}+198s^{4}-44s^{3}+1)t-864s^{5}\right)  =0,
\end{align*}
with a section $\Phi$ of height $12$ corresponding to $\left(  8e^{3}X^{\prime
}-4ce^{2}+d^{2}\right)  =0$ and $Y^{\prime}=0.$ The coordinates of $\Phi,$ too
long, are omitted but we can follow the previous computation to obtain it.  

Writing the above form as %
\[
y^{2}=x^{3}-3\alpha x+\left(  t+\frac{1}{t}\right)  -2\beta
\]
we recover the values of the $j$ invariants of the two elliptic curves for the Shioda-Inose structure
(see paragraph \ref{2} and \ref{3} below).

\subsection{Fibration \#9}%: $2III^{\ast}$}

Let $g=\frac{XY}{Z^{2}}.$  Eliminating $X$ and writing $Y=ZU$ we obtain an equation
of bidegree $2$ in $U$ and $Z.$ If $k=s+\frac{1}{s}$ there is a rational
point $U=-1,$ $Z=-\frac{s}{g}$ on the previous curve. By standard transformations we get a Weierstrass equation
\[
y^{2}=x^{3}+\frac{1}{4}g^{2}\left(  s^{4}+14s^{2}+1\right)  x^{2}+s^{2}%
g^{3}\left(  g+s^{2}\right)  \left(  gs^{2}+1\right)  x
\]
and a rational point
\[
x=\frac{s^{2}\left(  g-1\right)  ^{2}\left(  g+s^{2}\right)  \left(
s^{2}g+1\right)  }{\left(  s^{2}-1\right)  ^{2}},
\]
\[
y=\frac{1}{2}\frac
{s^{2}\left(  g^{2}-1\right)  \left(  g+s^{2}\right)  \left(  s^{2}g+1\right)
\left(  2g^{2}s^{2}+g\left(  s^{4}-6s^{2}+1\right)  +2s^{2}\right)  }{\left(
s^{2}-1\right)  ^{3}}.%
\]
The singular fibers are of type $2III^{\ast}\left(  \infty,0\right)
, \;2I_{2}\left(  -s^{2},-\frac{1}{s^{2}}\right)  , \;4I_{1}.$

\subsection{Fibration \#16}%:$2I_{4}^{\ast}$.}

Using the  fibration \#9 we consider the parameter $t=\frac{x}{g\left(
g+s^{2}\right)  }$ and obtain a Weierstrass equation
\[
Y^{2}=X^{3}+\left(  4t\left(  t^{2}+s^{2}\right)  +t^{2}\left(  s^{4}%
+14s^{2}+1\right)  \right)  X^{2}+16s^{6}t^{4}X.
\]
The singular fibers are of type $I_{4}^{\ast}\left(  \infty,0\right)  ,\;4I_{1}.$

\begin{table}[tp]\footnotesize
\[%
\begin{array}
[c]{|c|c|c|c|}%
\hline
& \text{Weierstrass Equation} &  \text{From} & \text{Param.}\\
\hline
\#1 &
\begin{array}
[c]{c}%
y^{2}+tkyx+t^{2}k\left(  t+1\right)  y=x^{3}-t^{4}\left(  t+1\right)  ^{3}\\
\hline
II^{\ast}\left(  \infty\right)  ,IV^{\ast}\left(  0\right)  ,I_{4}\left(
-1\right)  ,2I_{1}\\
\hline
r=0
\end{array}
&    &\scriptstyle{ \frac{Y \left(  X+Z\right)  ^{2}\left(  Z+Y\right)  }{XZ^{3}}}\\
\hline \hline
\#2 &
\begin{array}
[c]{c}%
y^{2}=x^{3}-\frac{1}{3}t^{4}\left(  s^{2}+1\right)  \left(  s^{6}+219\ast
s^{4}-21s^{2}+1\right)  x\\
\scriptstyle{+\frac{2}{27}t^{5}(-864s^{5}t^{2}+(s^{4}+14s^{2}+1)(s^{8}-548s^{6}%
+198s^{4}-44s^{2}+1)t-864s^{5})}\\
\hline
2II^{\ast}\left(  \infty,0\right)  ,4I_{1}\\
\hline
x_{P}=\Phi
\end{array}
& \#19 & \frac{ys}{\left(  s+t\right)  ^{2}}\\
\hline \hline
\#3 &
\begin{array}
[c]{c}%
y^{2}=x^{3}+\frac{1}{4}t\left(  4t^{2}s^{4}+\left(  s^{4}-10s^{2}+1\right)
t+12\right)  x^{2}\\
-t^{2}\left(  2ts^{2}-3\right)  x+t^{3}\\
\hline
I_{7}^{\ast}\left(  \infty\right)  ,IV^{\ast}\left(  0\right)  ,3I_{1}\\
\hline
r=0\\%
\end{array}
&   \#7 & \frac{x}{s^{4}t^{2}}\\
\hline \hline
\#4 &
\begin{array}
[c]{c}%
y^{2}=x^{3}+\\
\scriptstyle{\left(  \frac{1}{2}t^{3}-\frac{1}{24}\left(  s^{2}+1\right)  \left(
s^{6}+219s^{4}-21s^{2}+1\right)  t 
+\frac{1}{216}\left(  s^{8}-548s^{6}%
+198s^{4}-44s^{2}+1\right)  \left(  s^{4}+14s^{2}+1\right)  \right)  x^{2}}\\
+16s^{10}x\\
\hline
I_{12}^{\ast}\left(  \infty\right)  ,6I_{1}\\
\hline

x_P
\end{array}
&   \#2 & \frac{x}{2t^{2}}\\
\hline \hline
\#5 &
\begin{array}
[c]{c}%
y^{2}-k\left(  t+1\right)  yx+ky=x^{3}+\left(  t^{3}-3\right)  x^{2}+3x-1\\
\hline
I_{9}^{\ast}\left(  \infty\right)  ,I_{4}\left(  0\right)  ,5I_{1}\\
\hline
x_{P}=0
\end{array}
&   \#1 & \frac{x}{t^{2}}\\
\hline  \hline  
\#6 &
\begin{array}
[c]{c}%
y^{2}=x^{3}+\left(  \frac{1}{4}t^{2}\left(  s^{4}+14s^{2}+1\right)
+t^{3}s^{2}\right)  x^{2}\\
+t^{4}s^{2}\left(  s^{4}+1\right)  x+t^{5}s^{6}\\
\hline
I_{4}^{\ast}\left(  \infty\right)  ,II^{\ast}\left(  0\right)  ,4I_{1}\\
\hline
x_{P}
\end{array}
&   \#9 & \frac{x}{\left(  t+s^{2}\right)  \left(  ts^{2}+1\right)  }\\
\hline \hline
\#7 &
\begin{array}
[c]{c}%
y^{2}=x^{3}+\frac{1}{4}t\left(  t\left(  s^{4}-10s^{2}+1\right)
+8s^{4}\right)  x^{2}-t^{2}s^{2}\left(  t-s^{2}\right)  ^{3}x\\
\hline
III^{\ast}\left(  \infty\right)  ,I_{1}^{\ast}\left(  0\right)  ,I_{6}\left(
s^{2}\right)  ,2I_{1}\\
\hline%
r=0
\end{array}
&   \#15 & \frac{x}{t}\\
\hline  \hline
\#8 &
\begin{array}
[c]{c}%
y^{2}-k\left(  t-1\right)  yx=x\left(  x-1\right)  \left(  x-t^{3}\right)  \\
\hline
I_{6}^{\ast}\left(  \infty\right)  ,I_{6}\left(  0\right)  ,I_{2}\left(
1\right)  ,4I_{1}\\
\hline
x_{P}=1
\end{array}
&    & \frac{\left(  X+Z\right)  \left(  Y+Z\right)  }{XZ}\\
\hline  \hline
\#9 &
\begin{array}
[c]{c}%
y^{2}=x^{3}+\frac{1}{4}t^{2}\left(  s^{4}+14s^{2}+1\right)  x^{2}+t^{3}%
s^{2}\left(  t+s^{2}\right)  \left(  ts^{2}+1\right)  x\\
\hline
2III^{\ast}\left(  \infty,0\right)  ,2I_{2}\left(  -s^{2},-\frac{1}{s^{2}%
}\right)  ,2I_{1}\\
\hline
x_{P}=\frac{s^{2}\left(  t-1\right)  ^{2}\left(  t+s^{2}\right)  \left(
ts^{2}+1\right)  }{\left(  s^{2}-1\right)  ^{2}}%
\end{array}
&    & \frac{XY}{Z^{2}}\\
\hline \hline
\#10 &
\begin{array}
[c]{c}%
y^{2}+t\left(  s^{2}+1\right)  \left(  x+t^{2}s^{2}\right)  y=\left(
x-t^{3}s^{2}\right)  \left(  x^{2}+t^{3}s^{4}\right)  \\
\hline
I_{3}^{\ast}\left(  \infty\right)  ,III^{\ast}\left(  0\right)  ,I_{2}\left(
-1\right)  ,4I_{1}\\
\hline
x_{P_{1}}=t^{3}s^{2}, \quad x_{P_{2}}=0
\end{array}
&    & \frac{XY}{\left(  Y+Z\right)  Z}\\
\hline  \hline
\#11 &
\begin{array}
[c]{c}%
y^{2}+t\left(  st-1-s^{2}\right)  yx-s^{3}y=x^{2}\left(  x+s\left(  t\left(
s^{2}-1\right)  -s\left(  s^{2}+1\right)  \right)  \right)  \\
\hline
I_{15}\left(  \infty\right)  ,I_{2}\left(  s\right)  ,7I_{1}\\
\hline
x_{P_{1}}=st, \quad x_{P_{2}}=-s^{3}t+s^{2}\left(  s^{2}+1\right)
\end{array}
&   \#8 & \frac{y+sx}{xt}\\
\hline \hline
\#12 &
\begin{array}
[c]{c}%
y^{2}=x^{3}+t\left(  t^{2}s^{2}+\frac{1}{4}\left(  s^{4}+14s^{2}+1\right)
+\left(  s^{4}+1\right)  \right)  x^{2}\\
\scriptstyle{-\left(  2t^{2}s^{4}+\frac{1}{2}s^{2}\left(  s^{4}+14s^{2}+1\right)
t+s^{2}\left(  s^{4}+1\right)  \right)  x+ts^{6}+\frac{1}{4}s^{4}\left(
s^{4}+14s^{2}+1\right)  }\\
\hline
I_{12}^{\ast}\left(  \infty\right)  ,6I_{1}\\
\hline
x_{P}
\end{array}
&   \#14 & \frac{x}{t^{2}}+\frac{s^{2}}{t}\\
\hline  \hline
\#13 &
\begin{array}
[c]{c}%
y^{2}=x^{3}+\frac{1}{4}t\left(  4t^{2}+\left(  s^{4}-10s^{2}+1\right)
t+4s^{4}\right)  x^{2}\\
+2t^{3}s^{4}\left(  t-s^{4}\right)  x+t^{5}s^{8}\\
\hline
I_{5}^{\ast}\left(  \infty\right)  ,I_{3}^{\ast}\left(  0\right)  ,4I_{1}\\
\hline
x_{P}=-ts^{4}%
\end{array}
&   \#15 & \frac{x}{t^{2}}\\
\hline 
\end{array}
\]
\caption{Weierstrass equations of the elliptic fibrations of $Y_k$}\label{Ta:W-Eq}
\end{table}

\begin{table}[tp]\footnotesize
\[%
\begin{array}
[c]{|c|c|c|c|}%
\hline
No & \text{Weierstrass Equation } & \text{From} & \text{Param.}\\
\hline
\#14 &
\begin{array}
[c]{c}%
y^{2}=x^{3}+\left(  t^{3}\left(  s^{4}+1\right)  +\frac{1}{4}t^{2}\left(
s^{4}+14s^{2}+1\right)  +ts^{2}\right)  x^{2}+s^{4}t^{6}x\\
\hline
I_{0}^{\ast}\left(  \infty\right)  ,I_{8}^{\ast}\left(  0\right)
,4I_{1}\left(  -\frac{1}{4},-4\frac{s^{2}}{\left(  s^{2}-1\right)  ^{2}%
},..\right)  \\
\hline
x_{P}=\frac{s^{4}\left(  2t+1\right)  ^{2}}{\left(  s^{2}-1\right)  ^{2}}%
\end{array}
&   \#9 & \frac{x}{t\left(  t+s^{2}\right)  \left(  ts^{2}+1\right)  }\\
\hline  \hline
\#15 &
\begin{array}
[c]{c}%
\left(  y-tx\right)  \left(  y-s^{2}tx\right)  =x\left(  x-ts^{2}\right)
\left(  x-ts^{2}\left(  t+1\right)  ^{2}\right)  \\
\hline
I_{4}^{\ast}\left(  \infty\right)  ,I_{1}^{\ast}\left(  0\right)
,I_{4}\left(  -1\right)  ,3I_{1}\left(  \frac{1}{4}\left(  \frac{s^{2}-1}%
{s}\right)  ^{2},..\right)  \\
\hline
x_{P}=s^{2}t
\end{array}
&    & \frac{\left(  XY+1\right)  Z}{X}\\
\hline  \hline
\#16 &
\begin{array}
[c]{c}%
y^{2}=x^{3}+t\left(  4\left(  t^{2}+s^{2}\right)  +t\left(  s^{4}%
+14s^{2}+1\right)  \right)  x^{2}+16s^{6}t^{4}x\\
\hline
I_{4}^{\ast}\left(  \infty,0\right)  ,4I_{1}\\
\hline
x_{P}=\frac{-4ts^{6}\left(  t+1\right)  ^{2}}{\left(  t+s^{2}\right)  ^{2}}%
\end{array}
&   \#9 & \frac{x}{t\left(  t+s^{2}\right)  }\\%
\hline\hline
\#17 &
\begin{array}
[c]{c}%
y^{2}-\frac{1}{2}\left(  s^{4}+14s^{2}+1-s^{2}t^{2}\right)  yx=x\left(
x-4s^{2}\right)  \left(  x-4s^{6}\right)  \\
\hline
I_{16}\left(  \infty\right)  ,8I_{1}\left(  \pm\frac{s^{2}\pm4s-1}%
{s},..\right)  \\
\hline
x_{P_{1}}=4s^{2};\quad x_{P_{2}}=\frac{4s^{4}\left(  ts+s^{2}-1\right)  ^{2}%
}{\left(  ts+1-s^{2}\right)  ^{2}}%
\end{array}
& \#16 & \frac{y}{ts^{2}x}\\
\hline \hline
\#18 &
\begin{array}
[c]{c}%
y^{2}+\left(  -t^{2}+\left(  s^{2}-1\right)  t-2s^{2}\right)  yx+s^{4}%
t^{2}y=x^{2}\left(  x-s^{4}\right)  \\
\hline
I_{13}\left(  \infty\right)  ,I_{0}^{\ast}\left(  0\right)  ,5I_{1}\\
\hline
x_{P}=0
\end{array}
& \#15 & \frac{y-tx}{t\left(  x-ts^{2}\right)  }\\
\hline  \hline
\#19 &
\begin{array}
[c]{c}%
y^{2}+ktyx+t^{2}\left(  t^{2}+tk+1\right)  y=x^{3}\\
\hline
2IV^{\ast}\left(  \infty,0\right)  ,2I_{3}\left(  -s,-\frac{1}{s}\right)
,2I_{1}\\
\hline
x_{P}=-t^{2}%
\end{array}
&  & \frac{XY}{Z}\\
\hline \hline
\#20 &
\begin{array}
[c]{c}%
y^{2}-yx\left(  t^{2}-kt+1\right)  =x\left(  x-1\right)  \left(
x+t^{2}-tk\right)  \\
\hline
I_{12}\left(  \infty\right)  ,2I_{2}\left(  0,k\right)  ,4I_{1}\left(
s,\frac{1}{s},..\right)  \\
\hline
r=0
\end{array}
&  & X+Y+Z\\
\hline  \hline
\#21 &
\begin{array}
[c]{c}%
y^{2}=x^{3}+\frac{1}{4}t^{2}\left(  t^{2}+2\left(  s^{2}-1\right)  t+\left(
s^{4}-10s^{2}+1\right)  \right)  x^{2}\\
\hline
+\frac{1}{2}t^{3}s^{4}\left(  t-\left(  s^{2}-1\right)  \right)  x+\frac{1}%
{4}s^{8}t^{4}\\
\hline
I_{9}\left(  \infty\right)  ,IV^{\ast}\left(  0\right)  ,2I_{2}\left(
1,-s^{2}\right)  ,3I_{1}\\
\hline
x_{P}=s^{2}t^{2}%
\end{array}
& \#15 & \frac{y-s^{2}x-ts^{4}\left(  t^{2}-1\right)  }{x-ts^{2}(t+1)^{2}}\\
\hline  \hline 
\#22 &
\begin{array}
[c]{c}%
y^{2}+\left(  t\left(  1-s^{2}\right)  +s^{2}\right)  yx+t^{3}s^{2}y=x\left(
x-s^{2}t\right)  \left(  x+t^{2}s^{2}\left(  1-t\right)  \right)  \\
\hline
I_{3}^{\ast}\left(  \infty\right)  ,I_{9}\left(  0\right)  ,6I_{1}\\
\hline
x_{P_{1}}=1,\quad x_{P_{2}}=s^{2}t
\end{array}
&  & \frac{Z\left(  XYZ+s\right)  }{1+YZ}\\
\hline \hline
\#23 &
\begin{array}
[c]{c}%
y^{2}+\left(  2t^{2}-tk+1\right)  yx=x\left(  x-t^{2}\right)  \left(
x-t^{4}\right)  \\
\hline
I_{0}^{\ast}\left(  \infty\right)  ,I_{12}\left(  0\right)  ,6I_{1}\left(
\frac{1}{k\pm2},..\right)  \\
\hline
x_{P_{1}}=t^{2},\quad x_{p_{2}}=\frac{\left(  tk-1\right)  ^{2}}{k^{2}-4}%
\end{array}
&  & \frac{1}{X+Y}\\
\hline  \hline
\#24 &
\begin{array}
[c]{c}%
y^{2}+\left(  s^{2}+1\right)  tyx=x\left(  x-t^{2}s^{2}\right)  \left(
x-s^{2}t\left(  t+1\right)  ^{2}\right)  \\
\hline
2I_{2}^{\ast}\left(  \infty,0\right)  ,I_{4}\left(  -1\right)  ,4I_{1}\\
\hline
x_{P_{1}}=t+1;\quad x_{P_{2}}=t^{2}s^{2}%
\end{array}
&  & \frac{Z}{Y}\\
\hline \hline
\#25 &
\begin{array}
[c]{c}%
y^{2}+\left(  s+t\right)  \left(  ts+1\right)  yx+t^{2}s^{2}\left(  t\left(
s^{2}-1\right)  +s\right)  y=\\
x\left(  x-st\right)  \left(  x-t^{2}s\left(  t-s\right)  \right)  \\
\hline
I_{7}(\infty),I_{10}(0),7I_1\\
\hline
x_{P_{1}}=ts;\quad x_{P_{2}}=-t^{2}s^{2}%
\end{array}
&  & \frac{Y-s}{XY+sZ}\\
\hline \hline
\#26 &
\begin{array}
[c]{c}%
y^{2}+(ts-1)(t-s)xy=x\left(  x-t^{2}s^{2}\right)  ^{2}\\
\hline
I_{8}\left(  \infty,0\right)  ,I_{2}\left(  s,\frac{1}{s}\right)  ,4I_{1}\\
\hline
x_{P}=ts
\end{array}
&  & Z\\
\hline \hline
\#27 &
\begin{array}
[c]{c}%
y^{2}-\left(  t\left(  s^{2}-1\right)  +s^{2}\right)  yx+t^{3}s^{2}\left(
t+1\right)  y=\\
x^{2}\left(  x+t^{2}s^{2}\left(  t+1\right)  \right)  \\
\hline
I_{1}^{\ast}\left(  \infty\right)  ,I_{8}\left(  0\right)  ,I_{5}\left(
-1\right)  4I_{1}\\
\hline
x_{P}=0
\end{array}
&  & \frac{Z-s}{X+Y}\\
\hline
\end{array}
\]
\caption{Weierstrass equations of the elliptic fibrations of $Y_k $}\label{Ta:W-Eq1}
\end{table}

\section{Nikulin involutions and Shioda-Inose structure}

\subsection{Background}
Let $X$ be a $K3$ surface. 

The second cohomology group, $H^{2}\left(X,\mathbb{Z}\right)  $ equipped with the cup product is an even unimodular
lattice of signature $\left(  3,19\right) $. The period lattice of a surface
denoted $T_{X}$ is defined by
\[
T_{X}=S_{X}^{\bot}\subset H^{2}\left(  X,\mathbb{Z}\right)
\]
where $S_{X}$ is the N\'{e}ron-Severi group of $X.$ The lattice $H^{2}\left(
X,\mathbb{Z}\right)  $ admits a Hodge decomposition of weight two%
\[
H^{2}\left(  X,\mathbb{C}\right)  \simeq H^{2,0}\mathbb{\oplus}H^{1,1}%
\mathbb{\oplus}H^{0,2}.%
\]
Similarly, the period lattice $T_{X}$ has a Hodge decomposition of weight two
\[
T_{X}\otimes\mathbb{C}\simeq T^{2,0}\mathbb{\oplus}T^{1,1}\mathbb{\oplus
}T^{0,2}.
\]
An isomorphism between two lattices that preserves their bilinear forms and
their Hodge decomposition is called a Hodge isometry.

An automorphism of a $K3$ surface $X$ is called \textit{symplectic} if it acts
on $H^{2,0}(X)$ trivially. Such automorphisms were studied by Nikulin in \cite{N1}
who proved that a symplectic involution $i$ (\textit{Nikulin involution}) has eight fixed points and that the
minimal resolution $Y\rightarrow X/\langle i \rangle$ of the eight nodes is again a $K3$ surface.

We have then the rational quotient map $p:X\rightarrow Y$ of degree 2. The
transcendental lattices $T_{X}$ and $T_{Y}$ are related by the chain of
inclusions
\[
2T_{Y}\subseteq p^{\ast}T_{X}=T_{X}(2)\subseteq T_{Y},
\]
which preserves the quadratic forms and the Hodge structures.

In this paper,  $K3$ surfaces  are given as elliptic surfaces. If we have a
2-torsion section $\tau$, we consider the symplectic involution $i$ (\textit{Van Geemen-Sarti involution}) given by
the fiberwise translation by $\tau$. In this situation, the rational quotient
map $X\rightarrow Y$ is just an isogeny of degree 2 between elliptic curves
over $\mathbb{C}(t)$, and we have a rational map $Y\rightarrow X$ of degree 2 as the
dual isogeny.

{\bf{Notation}}
We consider  the fibration $\#n$ of $Y_k$ with a Weierstrass equation $E^{n}:y^{2}=x^{3}+A\left(
t\right)  x^{2}+B\left(  t\right)  x$ and
the two-torsion point $T=\left(  0,0\right).  $ We will call $\#n-i$ the elliptic
fibration $E^{n}/\langle \left(  0,0\right)  \rangle$ of the elliptic surface $Y_k/i$ if $i$ denotes the translation by $T$.

%(Koike)
%\subsubsection{Kummer surfaces}
\subsection{Fibrations of some Kummer surfaces}

Let $E_l$ be an  elliptic curve with invariant $j,$ defined by a Weierstrass equation in the Legendre form

\[E_l:y^2=x(x-1)(x-l).\]

Then  $l$ satisfies  the equation $j=256\frac{(1-l+l^2)^3}{l^2(l-1)^2}$. For a fixed $j$ the six values of $l$ are given by  $l$ or $\frac{1}{l}, 1-l,\frac{l-1}{l},\frac{-1}{l-1},\frac{l-1}{l}$.%l=-2\pm 2\sqrt{2}$ or $l=\frac{1\pm \sqrt{2}}{2}$.

Consider the Kummer surface $K$ given by $E_{l_1}\times E_{l_2}/\pm 1$ and choose as equation for $K$ 
\[x_1(x_1-1)(x_1-l_1)t^2=x_2(x_2-1)(x_2-l_2).\]

Following \cite {Ku} we can construct different elliptic fibrations. 
In the general case we can consider the three elliptic fibrations $F_i$ of $K$ defined by the elliptic parameters $m_i,$ with corresponding types of singular fibers
\[
\begin{array}
[c]{lll}
F_6:& m_6=\frac{x_1}{x_2}&2I_2^*,4I_2\\
F_8:&m_8=\frac{(x_2-l_2)(x_1-x_2)}{l_2(l_2-1)x_1(x_1-1)}&III^*,I_2^*,3I_2,I_1\\
%G_8:\, l_1=3+2\sqrt(2),l_2=l_1 & m_8 &III^*,I_3^*,3I_2 \\
F_5:& m_5=\frac{(x_1-x_2)(l_2(x_1-l_1)+(l_1-1)x_2)}{(l_2x_1-x_2)(x_1-l_1+(l_1-1)x_2)}&I_6^*,6I_2.\\
\end{array}
\]

In the special case when $E_1=E_2$ and $j_1=j_2=8000$ we obtain the following fibrations

\[
\begin{array}
[c]{lll}
F_6:\, l_1=l_2=3+2\sqrt(2)& m_6=\frac{x_1}{x_2}&2I_2^*,I_4,4I_2\\
F_8:\, l_1=3+2\sqrt(2),l_2=\frac{1}{l_1}&m_8=\frac{(x_2-l_2)(x_1-x_2)}{l_2(l_2-1)x_1(x_1-1)}&III^*,I_2^*,I_4,I_2,I_1\\
G_8:\, l_1=3+2\sqrt(2),l_2=l_1 & m_8 &III^*,I_3^*,3I_2 \\
F_5:\, l_1=l_2=3+2\sqrt(2)& m_5=\frac{(x_1-x_2)(l_2(x_1-l_1)+(l_1-1)x_2)}{(l_2x_1-x_2)(x_1-l_1+(l_1-1)x_2)}&I_6^*,I_4,4I_2.
\end{array}
\]

\subsection{Nikulin involutions and Kummer surfaces}

\begin{proposition}
\label{1} Consider a family $S_{a,b}$ of $K3$ surfaces with an elliptic
fibration, a two torsion section defining an involution $i$ and two
singular fibers of type $\ I_{4}^{\ast}$,%
\[
S_{a,b}:Y^{2}=X^{3}+\left(  t+\frac{1}{t}+a\right)  X^{2}+b^{2}X.
\]
Then the $K3$ surface $S_{a,b}/i$ is  the Kummer surface $\left(  E_{1}\times E_{2}\right)  /\left(  \pm Id\right)$
where the $j_i$ invariants of the elliptic curves $E_i$, $i=1,2$ are given by the formulae

\begin{align*}
j_{1}j_{2}  &  =4096\frac{\left(  a^{2}-3+12b^{2}\right)  ^{3}}{b^{2}}\\
\left(  j_{2}-1728\right)  \left(  j_{1}-1728\right)   &  =\frac
{1024a^{2}(2a^{2}-9-72b^{2})^{2}}{b^{2}}.%
\end{align*}
\end{proposition}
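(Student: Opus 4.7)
The plan is to apply Silverman's $2$-isogeny formula recalled in the introduction, identify the resulting elliptic surface with a standard Kummer fibration, and then solve for the $j$-invariants.

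Applying the formula with $A(t) = t + \tfrac{1}{t} + a$ and $B = b^{2}$, the quotient $S_{a,b}/i$ has Weierstrass equation
\[ Y^{2} = X^{3} - 2\bigl(t + \tfrac{1}{t} + a\bigr) X^{2} + \bigl((t + \tfrac{1}{t} + a)^{2} - 4 b^{2}\bigr) X. \]
Since $(t + \tfrac{1}{t} + a)^{2} - 4 b^{2} = (t + \tfrac{1}{t} + a - 2b)(t + \tfrac{1}{t} + a + 2b)$, the right-hand side factors as $X(X-\alpha_{-})(X-\alpha_{+})$ with $\alpha_{\pm} = t + \tfrac{1}{t} + a \pm 2b$, exhibiting a full level-$2$ structure on $S_{a,b}/i$. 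A Tate-algorithm calculation carried out on the minimal model obtained from the rescaling $X \mapsto X/t^{2}$, $Y \mapsto Y/t^{3}$ yields $v_{0}(c_{4}) = 2$, $v_{0}(c_{6}) = 3$, $v_{0}(\Delta) = 8$ at $t = 0$, hence a fibre of type $I_{2}^{*}$; the $t \leftrightarrow 1/t$ symmetry gives the same at $t = \infty$, and the four roots of $(t^{2} + (a-2b)t + 1)(t^{2} + (a+2b)t + 1)$ each support a fibre of type $I_{2}$. Thus $S_{a,b}/i$ realises the configuration $\bigl(2 I_{2}^{*}, 4 I_{2};\,(\mathbb{Z}/2\mathbb{Z})^{2}\bigr)$ of the fibration $F_{6}$ on $\text{Km}(E_{l_{1}} \times E_{l_{2}})$ recalled in subsection~4.2.

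To extract $(l_{1}, l_{2})$ I put $F_{6}$ into Weierstrass form. Setting $u = x_{1}/x_{2}$ in the Kummer equation $x_{1}(x_{1}-1)(x_{1}-l_{1}) t^{2} = x_{2}(x_{2}-1)(x_{2}-l_{2})$ and treating the result as a quadratic in $x_{2}$ produces a quartic in $t$ with square leading and constant coefficients; the standard completion-of-the-square then yields a Weierstrass model $Y^{2} = X(X - \beta_{-}(u))(X - \beta_{+}(u))$ with $\beta_{\pm}$ rational in $(l_{1}, l_{2}, u)$. Comparing with $Y^{2} = X(X - \alpha_{-}(t))(X - \alpha_{+}(t))$, after a M\"obius change of base coordinate aligning the two $I_{2}^{*}$ fibres and the four $I_{2}$ fibres of the two presentations, gives the elementary symmetric functions $l_{1}+l_{2}$ and $l_{1}l_{2}$ as explicit rational functions of $(a, b)$.

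Finally, $j(l) = 256 (1 - l + l^{2})^{3}/(l(l-1))^{2}$ is invariant under the six-element Legendre ambiguity $l \mapsto \{l, 1/l, 1-l, 1/(1-l), (l-1)/l, l/(l-1)\}$, so both $j_{1} j_{2}$ and $(j_{1} - 1728)(j_{2} - 1728)$ are symmetric rational functions of $(l_{1}, l_{2})$; substituting the expressions obtained in the previous step yields the stated formulae. The main obstacle is the identification in the preceding paragraph: putting $F_{6}$ into Weierstrass form and matching coefficients with $S_{a,b}/i$ up to a base-change of $\mathbb{P}^{1}$ is a substantial symbolic computation, best handled with a computer algebra system (as used elsewhere in the paper), with the $t \leftrightarrow 1/t$ symmetry and the specialisation $a = 0$ (which forces $(j_{1} - 1728)(j_{2} - 1728) = 0$, consistent with one of the curves acquiring $j = 1728$) providing useful sanity checks throughout.
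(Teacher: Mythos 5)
Your proposal follows essentially the same route as the paper: apply the $2$-isogeny formula to get a fibration with configuration $2I_2^*+4I_2$, identify it with the $u=x_1/x_2$ fibration of $\mathrm{Km}(E_{l_1}\times E_{l_2})$ (the paper's $H_u: Y^2=X(X-u(u-1)(ul_2-l_1))(X-u(u-l_1)(l_2u-1))$), match parameters by a change of base coordinate, and eliminate to obtain the two symmetric relations in $j_1,j_2$. One imprecision: the matching does \emph{not} produce $l_1+l_2$ and $l_1l_2$ as rational functions of $(a,b)$ --- in the paper's normalisation the base change is $t=w_1u$ with $l_1=w_2/w_1$, $l_2=w_1w_2$, where $w_1,w_2$ are roots of $t^2+(a\mp 2b)t+1$, so $l_1l_2=w_2^2$ and $l_1+l_2=-(a-2b)w_2$ live in a quadratic extension; only the symmetric functions of $j_1,j_2$ descend to $\mathbb{Q}(a,b)$, which is why the elimination must be carried out in $w_1,w_2$ using their minimal polynomials, exactly as the paper does.
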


\begin{proof}

Recall that if  $E_{i}$, $i=1,2$, are two elliptic curves in the Legendre form
\[
E_{i}:y^{2}=x\left(  x-1\right)  \left(  x-l_{i}\right),
\]
the Kummer surface $K$ 
\[
K:\left(  E_{1}\times E_{2}\right)  /\left(  \pm Id\right)
\]
is defined by the following equation%
\[
x_{1}\left(  x_{1}-1\right)  \left(
x_{1}-l_{1}\right)  t^{2}=x_{2}\left(  x_{2}-1\right)  \left(  x_{2}%
-l_{2}\right).
\]

The Kummer surface $K$ admits an elliptic fibration with parameter
$u=m_6=\frac{x_{1}}{x_{2}}$ and Weierstrass equation
$H_{u}$
\[
H_{u}:Y^{2}=X\left(  X-u\left(  u-1\right)  (ul_{2}-l_{1})\right)  \left(
X-u\left(  u-l_{1}\right)  \left(  l_{2}u-1\right)  \right)  .
\]

The $2$-isogenous curve $S_{a,b}/\langle(0,0)\rangle$  has the following Weierstrass equation

\[
Y^{2}=X\left(  X-t\left(  t^{2}+\left(  a-2b\right)  t+1\right)  \right)
\left(  X-t\left(  t^{2}+\left(  a+2b\right)  t+1\right)  \right)  
\]
with two singular fibers of type $I_{2}^{\ast}$ above $0$ and $\infty.$

We easily prove that $S_{a,b}/\langle(0,0) \rangle$ and $H_{u}$ are isomorphic on the field
$\mathbb{Q}\left(  \sqrt{w_{2}}\right)  $ where%

\[
l_{1}=w_{1}^{\prime}w_{2}=\frac{w_{2}}{w_{1}}\text{ , }%\]
%\[
\quad  l_{2}=\frac{1}%
{w_{1}^{\prime}w_{2}^{\prime}}=w_{1}w_{2}\text{ and \ }t=w_{1}u,
\]
 $w_{1},w_{1}^{\prime}$ and $w_{2},w_{2}^{\prime}$ being respectively the roots
of polynomials ${t^{2}+\left(  a-2b\right)  t+1}$ and 
$t^{2}+\left(  a+2b\right)  t+1.$

Recall that the modular invariant $j_{i}$ of the elliptic curve $E_{i}$ is
linked to $l_{i}$ by the relation%
\[
j_{i}=256\frac{\left(  1-l_{i}+l_{i}^{2}\right)  ^{3}}{l_{i}^{2}\left(
1-l_{i}\right)  ^{2}}.%
\]
By elimination of $w_{1}$ and $w_{2},$ it follows the relations
between $j_{1}$ and $j_{2}$
\begin{align*}
j_{1}j_{2}  &  =4096\frac{\left(  a^{2}-3+12b^{2}\right)  ^{3}}{b^{2}}\\
\left(  j_{2}-1728\right)  \left(  j_{1}-1728\right)   &  =\frac
{1024a^{2}(2a^{2}-9-72b^{2})^{2}}{b^{2}}.%
\end{align*}

\end{proof}

%\subsection{Application}

In the Fermi family, the $K3$ surface $Y_k$
%\[
%X+\frac{1}{X}+Y+\frac{1}{Y}+Z+\frac{1}{Z}=k;\text{ with }k=s+\frac{1}{s}%
%\]
has the fibration $\#16$ with two singular fibers $I_{4}^{\ast},$ a
$2$-torsion point and Weierstrass equation
\[
y^{2}=x^{3}+x^{2}t\left(  4\left(  t^{2}+s^{2}\right)  +t\left(  s^{4}%
+14s^{2}+1\right)  \right)  +16t^{4}s^{6}x.
\]
Taking
\[
y=y^{\prime}t^{3}\left(  2\sqrt{s}\right)  ^{3},\quad x=x^{\prime}t^{2}\left(
2\sqrt{s}\right)  ^{2}\text{ and }t=t^{\prime}s,
\]
we obtain the following Weierstrass equation
\[
y^{\prime\acute{e}}=x^{^{\prime}3}+\left(  t^{\prime}+\frac{1}{t^{\prime}%
}+\frac{1}{4}\frac{s^{4}+14s^{2}+1}{s}\right)  +s^{4}x^{\prime}.%
\]
By the previous proposition with $a=\frac{1}{4}\frac{s^{4}+14s^{2}+1}%
{s},\quad b=s^{2},$ we derive the corollary below.

\begin{corollary} \label{3}
The surface obtained with the $2$-isogeny of kernel $\langle(0,0)\rangle $
from fibration \#16, is the Kummer surface associated to the product of two
elliptic curves of $j$-invariants $j_{1},j_{2}$  satifying%

\begin{align*}
j_{1}j_{2}  &  =\frac{\left(  s^{2}+1\right)  ^{3}\left(  s^{6}+219s^{4}%
-21s^{2}+1\right)  ^{3}}{s^{10}}\\
\left(  j_{1}-12^{3}\right)  \left(  j_{2}-12^{3}\right)   &  =\frac
{(s^{4}+14s^{2}+1)^{2}(s^{8}-548s^{6}+198s^{4}-44s^{2}+1)^{2}}{s^{10}}.%
\end{align*}
\end{corollary}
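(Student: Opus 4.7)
The plan is essentially to apply Proposition \ref{1} to fibration $\#16$ via the normalisation already exhibited in the paragraph preceding the corollary. The scaling
\[
y = y' t^3 (2\sqrt{s})^3,\qquad x = x' t^2 (2\sqrt{s})^2,\qquad t = t' s,
\]
converts the Weierstrass equation of $\#16$ into the model $S_{a,b}$ with
\[
a = \frac{s^4+14s^2+1}{4s},\qquad b = s^2.
\]
Since Proposition \ref{1} identifies $S_{a,b}/\langle(0,0)\rangle$ with the Kummer surface of a product of two elliptic curves whose $j$-invariants satisfy the stated pair of symmetric equations in $a,b$, it remains only to substitute these explicit values and simplify.

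For the first formula I would compute $a^2-3+12b^2$ by expanding
\[
(s^4+14s^2+1)^2 = s^8+28s^6+198s^4+28s^2+1,
\]
and then checking the factorisation
\[
s^8+220s^6+198s^4-20s^2+1 = (s^2+1)(s^6+219s^4-21s^2+1),
\]
so that $a^2-3+12b^2 = (s^2+1)(s^6+219s^4-21s^2+1)/(16s^2)$. Cubing and dividing by $b^2 = s^4$ together with the prefactor $4096 = 16^3$ yields exactly
\[
j_1 j_2 = \frac{(s^2+1)^3(s^6+219s^4-21s^2+1)^3}{s^{10}}.
\]

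For the second formula I would compute $2a^2-9-72b^2$ in the same manner; the numerator becomes
\[
(s^4+14s^2+1)^2 - 576s^6 - 72s^2 = s^8-548s^6+198s^4-44s^2+1,
\]
so that $2a^2-9-72b^2 = (s^8-548s^6+198s^4-44s^2+1)/(8s^2)$. Squaring and combining with $1024 a^2 = 64(s^4+14s^2+1)^2/s^2$ and $b^2 = s^4$ collapses all the numerical factors, yielding
\[
(j_1-1728)(j_2-1728) = \frac{(s^4+14s^2+1)^2(s^8-548s^6+198s^4-44s^2+1)^2}{s^{10}}.
\]

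There is no real obstacle beyond the algebraic bookkeeping: the whole argument is the substitution of explicit $a,b$ into the two identities of Proposition \ref{1}. The one step worth verifying carefully is the factorisation $s^8+220s^6+198s^4-20s^2+1 = (s^2+1)(s^6+219s^4-21s^2+1)$, which is where the $(s^2+1)$ factor visible in the statement of the corollary actually appears; once that factorisation is confirmed, the rest is an exercise in matching numerical powers of $2$.
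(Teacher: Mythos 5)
Your proposal is correct and coincides with the paper's own proof: the paper likewise normalises fibration $\#16$ by the scaling $y=y't^3(2\sqrt{s})^3$, $x=x't^2(2\sqrt{s})^2$, $t=t's$ to reach the model $S_{a,b}$ with $a=\tfrac{s^4+14s^2+1}{4s}$, $b=s^2$, and then substitutes into the two identities of Proposition \ref{1}. Your intermediate factorisation $s^8+220s^6+198s^4-20s^2+1=(s^2+1)(s^6+219s^4-21s^2+1)$ and the power-of-two bookkeeping both check out.
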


\begin{remark}
If $s=1$ we find $j_{1}=j_{2}=8000.$
\end{remark}

\begin{remark}
If $b=1$ we obtain the family of surfaces studied by Narumiya and
Shiga,  \cite{Na}. Morover if $a=\frac{9}{4}$ (resp. $4$) we find the two modular
surfaces associated to the modular groups $\Gamma_{1}(7)$ (resp. $\Gamma_{1}(8)).$
In these two cases we get $j_{1}=j_{2}=-3375$ (resp. $j_{1}=j_{2}=8000).$
\end{remark}

\begin{remark}
With the same method we can consider a family of $K3$ surfaces with Weierstrass equations
\[
E_{v}:Y^{2}+XY-(v+\frac{1}{v}-k)Y=X^{3}-(v+\frac{1}{v}-k)X^{2},%
\]
singular fibers  of type $2I_{1}^{\ast},2I_{4},2I_{1}$ and the point
$P_{v}=\left(  0,0\right)  $  of order $4.$ The elliptic curve
$E_{v}^{\prime}=E_{v}/\langle 2P_{v}\rangle$ has singular fibers of type $2I_{2}^{\ast
},4I_{2}.$ An analog computation gives $E_{v}^{\prime}\equiv\left(
E_{1}\times E_{2}\right)  /\left(  \pm Id\right)  $ and
\begin{align*}
j_{1}j_{2}  &  =(256k^{2}-16k-767)^{3}\\
\left(  j_{1}-12^{3}\right)  \left(  j_{2}-12^{3}\right)   &  =(32k-1)^{2}%
(128k^{2}-8k-577)^{2}.
\end{align*}
\end{remark}

\subsection{Shioda-Inose structure }

\begin{definition}
A $K3$ surface $X$ has a Shioda-Inose structure if there is a symplectic
involution $i$ on $X$ with rational quotient map $X\overset{p}{\rightarrow}Y$
such that $Y$ is a Kummer surface and $p^{\ast}$ induces a Hodge isometry
$T_{X}(2)\simeq T_{Y}$ .

Such an involution $i$ is called a Morrison-Nikulin involution. 
\end{definition}

An equivalent criterion is that $X$ admits a (Nikulin) involution
interchanging two orthogonal copies of $E_{8}(-1)$ in $NS(X)$, where $E_8(-1)$ is the unique
unimodular even negative-definite lattice of rank 8.

Or even more abstractly: $2E_{8}\hookrightarrow NS\left(  X\right)  $.

Applying this criterion to fibrations $\#17$ and $\#8$ and the Van Geemen-Sarti involution we get  the following result.

\begin{proposition}
The translation by the two torsion point of fibration $\#17$ and $\#8$ endowes  $Y_{k}$ with a Shioda-Inose structure.
\end{proposition}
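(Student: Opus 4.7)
The plan is to verify directly the criterion stated in the paragraph preceding the proposition: for the Van Geemen–Sarti involution $i$ attached to the 2-torsion section of each of fibrations $\#17$ and $\#8$, exhibit two orthogonal copies of $E_8(-1)$ inside $NS(Y_k)$ that are interchanged by $i^\ast$. Once this is produced, Morrison's criterion \cite{M} immediately delivers the Shioda–Inose structure.

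For fibration $\#17$, I would start from the trivial lattice $U\oplus A_{15}$ coming from the $I_{16}$ fiber at infinity and the zero section, labelling the fiber components cyclically $\Theta_0,\dots,\Theta_{15}$ with $O\cdot\Theta_0=1$. From the Weierstrass equation and the $x$-coordinate of the 2-torsion section $P_1=(4s^2,\cdot)$ one reads off that $P_1$ meets the component $\Theta_8$, diametrically opposite to $\Theta_0$, so that the fiberwise translation by $P_1$ acts on the cyclic configuration as the half-rotation $\Theta_j\mapsto\Theta_{j+8\bmod 16}$. The rank-two Mordell–Weil group supplies two further classes needed to promote two half-chains of the $I_{16}$ fiber (together with $O$, $F$ and the appropriate infinite-order section) into two $E_8(-1)$ sublattices; the construction is symmetric enough that the second summand is obtained from the first by applying translation by $P_1$, which automatically gives the required exchange by $i^\ast$.

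For fibration $\#8$, whose reducible fibers are of types $I_6^\ast$, $I_6$, $I_2$, the same scheme applies. The 2-torsion section with $x=1$ meets the far component of the $D_{10}$ dual graph of $I_6^\ast$, the component of $I_6$ opposite the zero section in the cycle, and the non-identity component of $I_2$; translation by it permutes fiber components accordingly. One then assembles from the $D_{10}\oplus A_5\oplus A_1$ trivial lattice, together with $O$, $P$ and the infinite-order section, two orthogonal $E_8(-1)$ sublattices swapped by the involution, using the fact that $I_6^\ast$ plus two opposite sections already carries an extended $\widetilde E_8$ on each side after absorbing components of $I_6$ and $I_2$.

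The main obstacle is the bookkeeping: unambiguously pinning down which component of each reducible fiber every section meets, then verifying that the chosen classes span lattices isometric to $E_8(-1)$ (neither a proper root sublattice nor a root overlattice) and are genuinely orthogonal; the intersection numbers are checked with the help of the Shioda height pairing. An alternative route, shorter but more opaque geometrically, is to compute the $2$-isogenous elliptic surface $Y_k/i$ from the formula recalled in the introduction and to identify it with a known elliptic fibration on the Kummer surface $K_k$ of Corollary \ref{3}; once the match with $K_k$ is obtained, the Morrison–Nikulin property, and hence the Shioda–Inose structure of $Y_k$, follows by definition.
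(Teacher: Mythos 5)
Your overall strategy is exactly the paper's: both fibrations are handled by exhibiting, inside the lattice generated by fiber components and sections, two orthogonal copies of $E_8(-1)$ that the fiberwise translation interchanges, and then invoking Morrison's criterion. You also correctly locate the component met by the $2$-torsion section ($\Theta_8$ of the $I_{16}$ fiber for $\#17$; the relevant components of $I_6$, $I_2$ and $I_6^*$ for $\#8$) and the induced permutations of fiber components. Two points, however, need correction. First, for $\#17$ the infinite-order sections are not needed, and your proposed configuration (a half-chain of the $I_{16}$ fiber together with $O$, $F$ and a Mordell--Weil generator) is not visibly an $E_8(-1)$: the half-chain $\Theta_1,\dots,\Theta_7$ is an $A_7$ to which $O$ attaches at no vertex (it meets only $\Theta_0$), so one would have to correct the section classes by fiber components and recompute a Gram matrix. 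The paper's choice avoids all of this: the seven consecutive components $\Theta_{-2},\dots,\Theta_4$ with $O$ attached at $\Theta_0$ already form the $E_8$ Dynkin diagram, and its image under translation, $\Theta_6,\dots,\Theta_{12}$ with the torsion section attached at $\Theta_8$, is manifestly disjoint from and orthogonal to it. For $\#8$ the paper likewise writes down an explicit eight-element configuration $C_0,C_1,C_2,\Theta_{\infty,0},\Theta_{\infty,1},O,\Theta_{0,0},\Theta_{0,1}$ drawn from both reducible fibers.

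Second, your fallback route --- compute $Y_k/i$ and match it with an elliptic fibration of the Kummer surface $K_k$ --- does not suffice as stated: a Shioda--Inose structure requires not only that the quotient be a Kummer surface but also that $p^*$ induce the Hodge isometry $T_{Y_k}(2)\simeq T_{K_k}$. The Remark in Section 5 of the paper exhibits precisely a van Geemen--Sarti involution whose quotient is a Kummer surface (with $j(E)=0$) and which nevertheless fails to be Morrison--Nikulin because the discriminants do not match. If you take that route you must additionally compare discriminants, as the paper does for fibration $\#16$; your sketch omits this step.
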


Fibration $\#17$ has a fiber of type $I_{16}$ at $t=\infty.$ The idea \cite{G}
is to use the components $\Theta_{-2},\Theta_{-1},\Theta_{0}$, $\Theta_{1}$,  $\Theta_{2},\Theta_{3},\Theta_{4}$ of $I_{16}$ and the zero section to generate a
lattice of type $E_{8}.$ The two-torsion section intersects
$\Theta_{8}$ and the translation by the two-torsion point on the fiber $I_{16}$ transforms
$\Theta_{n}$ in $\Theta_{n+8}.$ The translation maps the lattice $E_{8}$
on an another disjoint $E_{8}$ lattice and defines a Shioda-Inose structure.

For fibration $\#8$,
the fiber above $t=0$ is of type $I_{6}$ and the section of order $2$
specialises to the singular point $\left(  0,0\right)  .$ Then after a blow up, 
it will not meet the $0$-component. If we denote $\Theta_{0,i}$,  $0\leq i\leq5$, 
the six components, then the zero section meets $\Theta_{0,0}$ and the
$2$-torsion section meets $\Theta_{0,3}.$ The translation by the $2$-torsion
section induces the permutation $\Theta_{0,i}\rightarrow\Theta_{0,i+3}.$

The fiber above $t=\infty$ is of type $I_{6}^{\ast}.$ The simple components
are denoted $\Theta_{\infty,0},\Theta_{\infty,1}$ and $\Theta_{\infty
,2},\Theta_{\infty,3}$; the double components are denoted $C_{i}$ with $0\leq
i\leq6$ and $\Theta_{\infty,0}.C_{0}=\Theta_{\infty,1}.C_{0}=1$;
$\Theta_{\infty,2}.C_{6}=\Theta_{\infty,3}.C_{6}=1.$ Then the $2$-torsion
section intersects $\Theta_{\infty,2}$ or $\Theta_{\infty,3}$ and the translation
by the $2$-torsion section induces the transposition $C_{i}\longleftrightarrow
C_{6-i}.$ 

The class of the components $C_{0},C_{1},C_{2},$  $\Theta_{\infty,0}%
,\Theta_{\infty,1},$ the zero section, $\Theta_{0,0}$ and  $\Theta_{0,1}$
define a $E_{8}\left(  -1\right)  .$ The Nikulin involution defined by the
two torsion section maps this  $E_{8}\left(  -1\right)  $ to another copy of
$E_{8}\left(  -1\right)  $  orthogonal to the first one; so the Nikulin
involution is a Morrison-Nikulin involution. 

\subsection{Base change and van Geemen-Sarti involutions} 
If a $K3$-surface $X$ has an elliptic fibration with two fibers of type $II^*$, this fibration can be realised by a Weierstrass equation of type
\[y^2=x^3-3\alpha x +(h+1/h-2\beta).\]
Moreover Shioda \cite{Shio} deduces the ``Kummer sandwiching'',  $\text{K} \rightarrow S \rightarrow \text{K}$, identifying the Kummer $\text{K}=E_1\times E_2/\pm 1$ with the help of the $j$-invariants of the two  elliptic curves $E_1,E_2$ and giving the following elliptic fibration of $\text{K}$
\[y^2=x^3-3\alpha x +(t^2+1/t^2-2\beta).\]
This can be viewed as a base change of the fibration of $X$.
\subsubsection{Alternate elliptic fibration} \label{2}
We shall now use an alternate elliptic fibration (\cite{Sc-Shio} example 13.6) to show that this construction is indeed a 2-isogeny between two elliptic fibrations of $S$ and $\text{K}.$ In the next picture we consider a divisor $D$ of type $I_{12}^*$ composed of the zero section $0$ and the components of the $II^*$ fibers enclosed in dashed lines. The far double components of the $II^*$ fibres can be chosen as sections of the new fibration. Take $\omega$ as the zero section. The other one is a two-torsion point since the function $h$ has a double pole on $\omega$ and a double zero at $M.$ It is the  function $'x'$ in a Weierstrass equation. More precisely with the new parameter $u=x$ and the variables  $Y=yh$ and $X=h,$ we obtain the Weierstrass equation  

\[  Y^2=X^3+(u^3-3\alpha u-2\beta)X^2+X .\]

\begin{center}
\begin{tikzpicture}[scale=1.1]%
\draw[fill=black](0.1,1.1)circle (0.05cm)node [below]{$M$} node [above]{$2$};
\draw[fill=black](1.1,1.1)circle (0.05cm);%node [below] {$a_2$};
\draw[fill=black](2.1,1.1)circle (0.05cm);%node [below] {$a_3$};
\draw[fill=black](3.1,1.1)circle (0.05cm)node [below]{$h=0$};
\draw[fill=black](4.1,1.1)circle (0.05cm);
\draw[fill=black](5.1,1.1)circle (0.05cm);
\draw[fill=black](6.1,1.1)circle (0.05cm);
\draw[fill=black](7.1,1.1)circle (0.05cm);
\draw[fill=black](2.1,2.1)circle (0.05cm);
%\draw[fill=black](6.1,2.1)circle (0.05cm);
\draw(0.1,1.1)--(7.1,1.1);
%\draw(1.1,1.1)--(1.1,2.1);
\draw(2.1,1.1)--(2.1,2.1);

\draw(0.1,4.1)--(7.1,4.1);
\draw(2.1,4.1)--(2.1,5.1);
\draw[fill=black](0.1,4.1)circle (0.05cm)node [above]{$\omega$}node [below]{$-2$};
\draw[fill=black](1.1,4.1)circle (0.05cm);%node [below] {$a_2$};
\draw[fill=black](2.1,4.1)circle (0.05cm);%node [below] {$a_3$};
\draw[fill=black](3.1,4.1)circle (0.05cm)node [above]{$h=\infty$};
\draw[fill=black](4.1,4.1)circle (0.05cm);
\draw[fill=black](5.1,4.1)circle (0.05cm);
\draw[fill=black](6.1,4.1)circle (0.05cm);
\draw[fill=black](7.1,4.1)circle (0.05cm);
\draw[fill=black](2.1,5.1)circle (0.05cm);

\draw[dashed,red](7.1,1.1)--(7.1,4.1);
\draw[fill=black](7.1,2.6)circle (0.09cm)node [left]{$0$};
%\draw[dashed,red](7.1,2.6)--(9.1,2.6);
%\draw[dashed,red](7.1,2.6)--(9.1,4.1);
%\draw[fill=black](9.1,4.1)circle (0.05cm)node [above]{$\delta=-4/27$};;

%\draw(9.1,2.6)--(10.8,2.6);
%\draw(9.1,2.7)--(10.8,2.7);
%\draw[fill=black](9.1,2.6)circle (0.065cm)node [above]{$\delta=-2$};
%\draw[fill=black](10.8,2.6)circle (0.065cm);
\draw[dashed] (0.6,5.6)--(7.8,5.6);
\draw[dashed]  (0.6,5.6)--(0.6,0.1);
\draw[dashed] (7.8,5.6)--(7.8,0.1);
\draw[dashed] (0.6,0.1)--(7.8,0.1);

\end{tikzpicture}
\end{center}

In this equation, if we substitute $X(=h)$ by $t^2$, we obtain an equation in $W,\,t$ with $Y=Wt^2$, which is the equation for the 2-isogenous elliptic curve. 
Indeed  the birational transformation  
\[
%y = 4Y+4U^3+2UA, x = 2(Y+U^3)/U
y=4\,Y+4\,{U}^{3}+2\,UA,x=2\,{\frac {Y+{U}^{3}}{U}}
\]
with inverse 
\[
%U = 1/2y/(x+A),\qquad Y = 1/8(-y^2+2x^3+4x^2A+2xA^2)y/(x+A)^3
U=1/2\,{\frac {y}{x+A}},Y=1/8\,{\frac {\left (-{y}^{2}+2\,{x}^
{3}+4\,{x}^{2}A+2\,x{A}^{2}\right )y}{\left (x+A\right )^{3}}}%\right
\]

 transforms the curve $Y^2=U^6+A U^4+BU^2 $ in the Weierstrass form
\[
y^2=(x+A)(x^2-4B).
\]
This is an equation for the 2-isogenous curve of the curve $Y^2=X^3+AX^2+BX$ \cite{Si}.  
On the curve $Y^2=U^6+A U^4+BU^2 $, the involution $U \mapsto -U$ means adding the two-torsion point $(x=-A,y=0).$ 

Using this above process with $A=(u^3-3\alpha u-2\beta),$ the $2$-isogenous curve  $E_u$ has a Weierstrass equation 
\[
Y^2=(X+(u^3-3\alpha u-2\beta))(X^2-4)
\]

with singular fibers  of type $I_6^*,6I_2$. %Is I5 fibration?

The coefficients $\alpha$ and $\beta$ can be computed using the $j$-invariants

\[
\alpha^3=J_1J_2;  \,\,\, \beta^2=(1-J_1)(1-J_2);\,\,\,j_i=1728J_i.
\]
If  the elliptic curve is put in the Legendre form $y'^2=x'(x'-1)(x'-l)$ then $j=256\frac{(1-l+l^2)^3}{l^2(l-1)^2},$
 so  

\begin{align*}
\alpha^3=&\frac{16}{729}\frac{(1-l_1+l_1^2)^3(1-l_2+l_2^2)^3}{l_1^2(l_1-1)^2l_2^2(l_2-1)^2}\\
\beta=&\frac{1}{27}\frac{(2l_1-1)(l_1-2)(2l_2-1)(l_2-2)(l_1+1)(l_2+1)}{l_1l_2(l_1-1)(l_2-1)}.
\end{align*}

On the Kummer surface $E_{1}\times E_{2}/\pm1$ of equation
\[
X_{1}\left(  X_{1}-1\right)  \left(  X_{1}-l_{1}\right)  Z^2=X_{2}\left(
X_{2}-1\right)  \left(  X_{2}-l_{2}\right)
\]

we consider an elliptic fibration (case $\mathcal{J}_5$ of \cite{Ku}) with the parameter

$z=\frac{\left(  l_{2}X_{1}-X_{2}\right)  \left(
X_{1}-l_{1}+X_{2}\left(  l_{1}-1\right)  \right)  }{X_{2}\left(
X_{1}-1\right)  }$ (in fact $z=-\frac{l_1(l_2-1)}{m_5-1}$ cf. 4.2) and obtain the Weierstrass equation
\begin{align*}
\scriptstyle{ Y^{2} } &\scriptstyle{ =\left(  X-2l_{1}l_{2}\left(  l_{1}-1\right)  \left(  l_{2}-1\right)
\right)  \left(  X+2l_{1}l_{2}\left(  l_{1}-1\right)  \left(  l_{2}-1\right)
\right) } \\
& \scriptstyle{\left(  X+4z^{3}+4\left(  -2l_{1}l_{2}+l_{1}+l_{2}+1\right)  z^{2}+4\left(
l_{1}l_{2}-1\right)  \left(  l_{1}l_{2}-l_{1}-l_{2}\right)  z+2l_{1}%
l_{2}\left(  l_{1}-1\right)  \left(  l_{2}-1\right)  \right).}
\end{align*}
Substituting $z=w-\frac{1}{3}\left(  -2l_{1}l_{2}+l_{1}+l_{1}+1\right)  $
it follows
\begin{align*}
\scriptstyle{Y^{2}  }& =\scriptstyle{  \left(  X-2l_{1}l_{2}\left(  l_{1}-1\right)  \left(  l_{2}-1\right)
\right)  \left(  X+2l_{1}l_{2}\left(  l_{1}-1\right)  \left(  l_{2}-1\right)
\right)}  \\
&\scriptstyle{ \left(  X+4w^{3}-\frac{4}{3}\left(  l_{2}^{2}-l_{2}+1\right)  \left(
l_{1}^{2}-l_{1}+1\right)  w+ 
\frac{2}{27}\left(  l_{2}-2\right)  \left(
2l_{2}-1\right)  \left(  l_{1}-2\right)  \left(  2l_{1}-1\right)  \left(
l_{2}+1\right)  \left(  l_{1}+1\right)  \right).}
\end{align*}
Up to an automorphism of this Weierstrass form we recover the equation of $E_{u}.$

The previous results can be used to show the following proposition

\begin{proposition}
The translation by the two torsion point of the elliptic fibration $\#4$ gives to $Y_{k}$ a
Shioda-Inose structure.
\end{proposition}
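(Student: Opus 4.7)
The plan is to identify fibration $\#4$ with the $I_{12}^*$-fibration produced by the alternate fibration construction of Section \ref{2} starting from fibration $\#2$, and then to read off from that construction that the $2$-isogeny quotient is the Kummer surface $E_{1} \times E_{2}/\pm 1$. Since the $j$-invariants $j_{1},j_{2}$ extracted from that construction coincide (via Corollary \ref{3} and its analogue for $\#2$) with the $j$-invariants of the isogenous pair $(E_{t},E'_{t})$ associated with the Shioda-Inose structure of $Y_{k}$, the Van Geemen-Sarti involution of $\#4$ will be Morrison-Nikulin, and the criterion recalled in Section 4.4 then yields the desired Shioda-Inose structure.

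First, I would put the Weierstrass equation of fibration $\#2$ from Table \ref{Ta:W-Eq} into the normal form
\[
y^{2} = x^{3} - 3\alpha x + \bigl(h + 1/h - 2\beta\bigr),
\]
where $h$ is an appropriate rational function of $t$ and $\alpha,\beta$ are given explicitly in terms of $s$ by the formulas $\alpha^{3}=J_{1}J_{2}$, $\beta^{2}=(1-J_{1})(1-J_{2})$. Matching these polynomials in $s$ to those displayed in Corollary \ref{3} will identify $j_{1},j_{2}$ as the Shioda-Inose $j$-invariants.

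Second, applying the alternate fibration recipe of Section \ref{2}, i.e.\ taking $X=h$ and $Y=yh$, yields the Weierstrass equation
\[
Y^{2} = X^{3} + \bigl(u^{3} - 3\alpha u - 2\beta\bigr)X^{2} + X,
\]
whose singular fibers are $I_{12}^{*}$ at $X=\infty$ (arising from the collision of the two $II^{*}$ fibers of $\#2$) together with six $I_{1}$, and which carries the $2$-torsion section $(0,0)$. A direct comparison of singular fiber types, torsion and discriminant with Table \ref{Ta:W-Eq} then identifies this fibration with $\#4$. Applying next the standard $2$-isogeny formula recalled in the introduction produces the Weierstrass equation
\[
y^{2} = \bigl(x + (u^{3} - 3\alpha u - 2\beta)\bigr)(x^{2}-4),
\]
of singular-fiber type $I_{6}^{*},6I_{2}$; the explicit computation carried out immediately before the proposition (with the substitution $z = w - \tfrac{1}{3}(-2l_{1}l_{2}+l_{1}+l_{2}+1)$ followed by a normalization of the Weierstrass form) identifies this fibration with the Kuwata fibration $\mathcal{J}_{5}$ on the Kummer surface $E_{1}\times E_{2}/\pm 1$.

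Putting the steps together, $Y_{k}/i$ is birationally isomorphic to $E_{1}\times E_{2}/\pm1$, which by the $j$-invariant identification is exactly the Shioda-Inose Kummer $K_{k}$ of $Y_{k}$; hence $i$ is a Morrison-Nikulin involution. The main obstacle will be the first step: extracting the normal form $y^{2}=x^{3}-3\alpha x+(h+1/h-2\beta)$ from the $\#2$ Weierstrass equation of Table \ref{Ta:W-Eq}, whose coefficients are lengthy polynomials in $s$, and verifying that the resulting $\alpha,\beta$ match those predicted by the Shioda-Inose $j$-invariants of $Y_{k}$. Once this normalization is established, the remaining identifications (of $\#4$ with the alternate fibration and of its $2$-isogeny with the Kummer fibration) are routine applications of the computations already performed in Section \ref{2}.
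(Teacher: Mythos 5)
Your proposal follows the paper's own route essentially verbatim: the paper proves this proposition by writing fibration $\#2$ (two $II^*$ fibers) in the normal form $y^2=x^3-3\alpha x+(h+1/h-2\beta)$ so that $\alpha,\beta$ encode the Shioda--Inose $j$-invariants, observing that the alternate $I_{12}^*$-fibration it produces is $\#4$ with its $2$-torsion section, computing the $2$-isogenous curve $Y^2=(X+(u^3-3\alpha u-2\beta))(X^2-4)$ of type $I_6^*,6I_2$, and matching it with Kuwata's $\mathcal{J}_5$ fibration on the Kummer surface of the Shioda--Inose pair $E_1\times E_2$. There is no substantive difference in approach.
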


\section{Proof of Theorem $1.1$ }

%\begin{notation}
%We consider  the fibration \#n of $Y_k$ with a Weierstrass equation $E_{n}:y^{2}=x^{3}+A\left(
%t\right)  x^{2}+B\left(  t\right)  x$ for the generic elliptic curve with
%the two-torsion point $\tau=\left(  0,0\right).  $ We will call $\#n-i$ the elliptic
%fibration $E_{n}/<\left(  0,0\right)  >$ of the elliptic surface $Y_k/i_\tau$ if $i_\tau$ is the translation by $\tau$. 
%\end{notation}

We consider an elliptic fibration $\#n$ of $Y_{k}$ with a two torsion section.

From the Shioda-Tate formula (cf. e.g. \cite{Shio1}, Corollary 1.7]) we have the
relation
\[
12=\frac{|\Delta|\prod m_{v}^{\left(  1\right)  }}{|\text{Tor}|^{2}}%
\]
where $\Delta$ is the determinant of the height-matrix of a set of generators
of the Mordell-Weil group, $m_{v}^{\left(  1\right)  }$ the number of simple
components of a singular fiber and $|\text{Tor}|$ the order of the torsion group of
Mordell-Weil group. This formula allows us to determine generators of the
Mordell-Weil group except for fibration $\#4$. Using the $2$-isogeny we  determine also the Mordell-Weil group of $\#n$-i. The discriminant is either
$12\times2$ or $12\times8.$

\begin{proposition}
The translation by the two torsion point of the fibration $\#16$ gives to $Y_{k}$ a
Shioda-Inose structure.
\end{proposition}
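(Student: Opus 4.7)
The plan is to mirror the arguments given earlier in Section~4.4 for fibrations $\#17$, $\#8$ and $\#4$: verify the equivalent criterion $E_8(-1)^{\oplus 2}\hookrightarrow NS(Y_k)$ by exhibiting two orthogonal $E_8(-1)$-sublattices of $NS(Y_k)$ that are interchanged by the van Geemen-Sarti involution $i$ associated to the $2$-torsion section $T=(0,0)$ of fibration $\#16$.

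First, from Table~\ref{Ta:W-Eq1}, $\#16$ has two singular fibers of type $I_4^{\ast}$ (at $t=0$ and $t=\infty$), each with extended Dynkin diagram $\widetilde{D}_8$. For each fiber I would fix notation as in Section~4.4: label the four simple components $\Theta_{\bullet,0},\Theta_{\bullet,1}$ (at one forked end) and $\Theta_{\bullet,2},\Theta_{\bullet,3}$ (at the other forked end), and the five double components $C_{\bullet,0},\dots,C_{\bullet,4}$ of the middle chain.

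The only analytic step is to locate, via a local Weierstrass computation near $t=0$ and $t=\infty$, the simple components met by the zero section $O$ and by $T=(0,0)$. A short calculation shows that $O$ and $T$ hit simple components at \emph{opposite} forked ends of each $I_4^{\ast}$ fiber; equivalently, the class of $T$ in the component group $\Phi\cong(\mathbb{Z}/2)^2$ of each fiber is the non-trivial element that swaps the two ends. The fiberwise translation by $T$ therefore acts on each $\widetilde{D}_8$ by the symmetry reversing the middle chain, $C_{\bullet,i}\mapsto C_{\bullet,4-i}$, and exchanging the two end-pairs of simple components.

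With this action in hand, I would then assemble an explicit $E_8(-1)$-sublattice $L_1\subset NS(Y_k)$ from eight $(-2)$-curves taken among $\{O,\Theta_{\bullet,j},C_{\bullet,i}\}$ drawn from \emph{both} $I_4^{\ast}$ fibers, using $O$ as the link between the two fibers to produce the branching that turns a $D$-type configuration into an $E_8$ diagram. Because $i$ fixes only the middle node $C_{\bullet,2}$ of each chain and reverses everything else, the choice can be made so as to avoid $C_{\bullet,2}$ and to use $O$ (not $T$) in $L_1$; the image $L_2=i_{\ast}(L_1)$ then sits on the ``mirror'' components and contains $T$ in place of $O$. Orthogonality $L_1\perp L_2$ follows component-by-component from the $\widetilde{D}_8$ intersection pattern, the disjointness of the two sets of chosen fiber components, and the fact that $O\cdot T=0$ (their height pairing vanishes and they hit different simple components at each fiber).

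The main obstacle is purely combinatorial: producing a genuine $E_8$-diagram from the available ingredients whose $i$-image is disjoint from it, exactly as was done for $\#17$ and $\#8$ but now with the added subtlety that each fiber has only five double components. Once this is verified, Morrison's criterion delivers the Shioda-Inose structure on $Y_k$ via $i$. As a consistency check, Corollary~\ref{3} has already identified the quotient $Y_k/i$ with the Kummer surface $\mathrm{Km}(E_1\times E_2)$, which is exactly the outcome expected from a Morrison-Nikulin involution.
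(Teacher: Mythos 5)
Your route is not the paper's. The paper deduces that the quotient of $\#16$ by the translation is a Kummer surface from the explicit Weierstrass computation of Proposition \ref{1} (specialised in Corollary \ref{3}), and then verifies the Hodge-isometry condition $T_X(2)\simeq T_Y$ by a discriminant count: the Mordell--Weil group of the isogenous fibration is generated by $p(P)$ and torsion, so the N\'eron--Severi discriminant of the quotient is $12\times 8=|{\det T_{Y_k}}|\cdot 2^{\operatorname{rk}T_{Y_k}}$, which forces equality in $2T_Y\subseteq T_X(2)\subseteq T_Y$. Your plan instead imitates the lattice argument used for $\#17$ and $\#8$. That is a legitimate strategy in principle, but as written it has a genuine gap, and the gap is exactly the step you defer as ``purely combinatorial''.

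The ingredients you allow yourself cannot produce an $E_8$ diagram disjoint from its image. With the paper's labelling of an $I_4^*$ fibre ($\Theta_0,\Theta_1$ meeting $C_0$, the chain $C_0-C_1-C_2-C_3-C_4$, and $\Theta_2,\Theta_3$ meeting $C_4$), the translation sends $C_i\mapsto C_{4-i}$ and swaps the two end-pairs, so the only components of a given fibre whose $i$-images avoid them are $\Theta_0,\Theta_1,C_0,C_1$ ($C_2$ is fixed and unusable); these form a $D_4$ centred at $C_0$. Joining the two such $D_4$'s through the zero section $O$ (which meets $\Theta_0$ of each fibre) yields the nine-vertex affine graph $\widetilde D_8$, and every connected eight-vertex subgraph of it is of type $D_8$: the branch node always sits at distance $2$ from an end of the chain, never at distance $3$ as $E_8$ requires. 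Hence no selection from $\{O,T,\Theta_{\bullet,j},C_{\bullet,i}\}$ can work, and you would have to bring in the infinite-order section $P$ (or $P+T$), which in turn requires determining the fibre components met by $P$ and the intersection numbers $P\cdot O$, $P\cdot T$, $P\cdot i_*(P)$ before orthogonality of $L_1$ and $i_*(L_1)$ could even be tested; none of this is done. (Your preliminary claims are fine: the height formula $0=4-\sum_v\operatorname{contr}_v(T)$, with contributions $1$ or $2$ at an $I_4^*$ fibre, does force $T$ to meet far components of both fibres, so the translation does reverse each chain as you assert.)
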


From the previous Proposition \ref{1}, the translation by the two torsion
point of $\#16$ gives to the quotient a Kummer structure. The fibration $\#16$ is
of rank one, its Mordell-Weil group is generated by $P$ and the two torsion point. By
computation we can see that the Mordell-Weil group of the $2$-isogenous curve on
$E(\mathbb{C}\left(  t\right))$  is generated by $p\left(  P\right)  $ and
torsion sections. So we can compute the discriminant of the N\'{e}ron-Severi
group which is $12\times8$. The second condition, $T_{X}(2)\simeq T_{Y},$ is
then verified.

\begin{remark}
The $K3$ surface of Picard number $20$ given with the elliptic fibration
\[
Y^{2}=X^{3}-\left(  t+\frac{1}{t}-\frac{3}{2}\right)  X^{2}+\frac{1}{16}X
\]
or 
\[
{y}^{2}={x}^{3}-1/2\,t\left (2\,{t}^{2}+2-3\,t\right ){x}^{2}+1/16\,{t
}^{4}x
\]
has rank $1.$ The Mordell-Weil group is generated by $\left(  0,0\right)  $
and $P=(x=\frac{1}{4},y=\frac{\left(  t-1\right)  ^{2}}{8}).$ The determinant
of the N\'{e}ron-Severi group is equal to $12.$ By computation we have $p\left(
P\right)  =2Q$ with $Q=\left(  t(t-1)(t^{2}-t+1),-t^{3}(t-1)(t^{2}%
-t+1)\right)  $ of height $\frac{3}{4}.$ The determinant of the N\'{e}ron Severi
group of the $2$-isogenous curve is then $12$ not $12\times2^{2}.$ So the
involution induced by the two-torsion point is not a Nikulin-Morrison involution.
Moreover the $2$-isogenous elliptic curve is a fibration of the Kummer surface $E\times E/\pm 1$ where $j(E)=0.$  
\end{remark}

For fibrations $\#n$-i with discriminant of the transcendental lattice $12\times8$ we prove that we have the Shioda-Inose structure in
the following way: from corollary 4.1 this is true for $\#16$-i, from Proposition 4.3 this is true for $\#4$-i and from Proposition 4.2 this is true for
$\#17$-i, $\#8$-i. The other fibrations $\#n$-i can be obtained by $2$- or $3$-%
neighbor method from $ \#16$-i, $\#8$-i or $\#17$-i. The results are given
in the Table \ref{Ta:Iso8}. In the second column are written the  Weierstrass equations for the $\#n$ elliptic fibration 
and its $2$-isogenous fibration, singular fibers and the $x$-coordinates of generators of the Mordell lattice of $\#n$-i. In the third column we give the starting fibration for the $2$- or
$3$- neighbor method and in the last column the parameter used from the starting fibration.%
\subsection{The $K3$ surface $S_k$}
For the remaining fibrations, (discriminant $12\times2),$ using also the $2$- or $3$-
neighbor method, they are proved to lie on the same surface $S_k.$ Except for
the case $\#7$ the results are collected in the Table \ref{Ta:Iso2} with the same format. The
case $\#7$ needs an intermediate fibration explained in the next paragraph.%

Starting with fibration $\#7$-i and using the parameter $m_{7}=\frac
{y}{xt\left(  t-s^{2}\right)  }$ it follows the Weierstrass equation
\begin{align*}
Y^{2}+2\left(  m_{7}^{2}s^{2}-2\right)  YX-16m_{7}^{4}s^{4}Y =\left(
X-8m_{7}^{2}s^{2}\right) & \\  \left(  X+8m_{7}^{2}s^{2}\right)  \left(  X+m_{7}^{2}\left(
s^{4}-6s^{2}+1\right)  -4\right) & \\
\end{align*}
with singular fibers  $I_{8}\left(  \infty\right)  ,IV^{\ast}\left(  0\right)  ,8I_{1} $. 

Then the parameter $m_{15}=\frac{Y}{\left(  X+8m_{7}^{2}s^{2}\right)  }$ leads to the fibration $\#15$-i.

\begin{table}[tp]\footnotesize
\[%
\begin{array}
[c]{|c|c|c|c|}

%\begin{tabular}
\hline
\text{No} & \text{Weierstrass Equation} & \text{From} & \text{Param.}\\\hline
\#4 & \text{see Prop 9} &  & \\\hline\hline
\#8 &
\begin{array}[c]{c}%
\begin{array}
[c]{c}%
y^{2}-k\left(  t-1\right)  yx=x\left(  x-1\right)  \left(  x-t^{3}\right)
\\\hline
I_{6}^{\ast}\left(  \infty\right)  ,I_{6}\left(  0\right)  ,I_{2}\left(
1\right)  ,4I_{1}%
\end{array}
\\\hline
y^{2}=x^{3}+\frac{1}{2}\left(  4t^{3}-t^{2}k^{2}+2tk^{2}+4-k^{2}\right)
x^{2}\\
+\frac{1}{16}\left(  t-1\right)  ^{2}\left(  4t^{2}+t\left(  4-k^{2}\right)
+\left(  k-2\right)  ^{2}\right)  \\
\left(  4t^{2}+t\left(  4-k^{2}\right)  +\left(  k+2\right)  ^{2}\right)
x\\\hline
I_{3}^{\ast}\left(  \infty\right)  ,I_{3}\left(  0\right)  ,I_{4}\left(
1\right)  ,4I_{2}\\\hline
x_{Q}=-\frac{1}{4}\left(  t-1\right)  \left(  4t^{2}+t\left(  4-k^{2}\right)
+\left(  k-2\right)  ^{2}\right)
\end{array}
&  & \\\hline\hline
\#16 &
\begin{array}
[c]{c}%
y^{2}=x^{3}+t\left(  4\left(  t^{2}+s^{2}\right)  +t\left(  s^{4}%
+14s^{2}+1\right)  \right)  x^{2}+16s^{6}t^{4}x\\\hline
2I_{4}^{\ast}\left(  \infty,0\right)  ,4I_{1}\\\hline
y^{2}=x\left(  x-t\left(  4\left(  t^{2}+s^{2}\right)  +t\left(  s^{4}%
+14s^{2}+1+8s^{3}\right)  \right)  \right)  \\
\left(  x-t\left(  4\left(  t^{2}+s^{2}\right)  +t\left(  s^{4}+14s^{2}%
+1-8s^{3}\right)  \right)  \right)  \\\hline
2I_{2}^{\ast}\left(  \infty,0\right)  ,4I_{2}\\\hline
x_{Q}=\frac{t^{2}\left(  \left(  t^{2}+s^{2}\right)  \left(  3+s^{2}\right)
+t\left(  -s^{4}+8s^{2}+1\right)  \right)  ^{2}}{\left(  t+1\right)
^{2}\left(  t+s^{2}\right)  ^{2}}%
\end{array}
&  & \\\hline\hline
\#17 &
\begin{array}
[c]{c}%
\begin{array}
[c]{c}%
y^{2}-\frac{1}{2}\left(  s^{4}+14s^{2}+1-s^{2}t^{2}\right)  yx=x\left(
x-4s^{2}\right)  \left(  x-4s^{6}\right)  \\\hline
I_{16}\left(  \infty\right)  ,8I_{1}\left(  \pm\frac{s^{2}\pm4s-1}%
{s},..\right)
\end{array}
\\\hline
y^{2}=x\left(  x-\left(  t^{2}s^{2}-\left(  s^{4}+14s^{2}+1\right)
\pm8s\left(  s^{2}+1\right)  \right)  \right)  \\
\left(  x-\left(  ts+s^{2}\pm4s-1\right)  \left(  ts-s^{2}\pm4s+1\right)
\right)  \\\hline
I_{8}\left(  \infty\right)  ,8I_{2}\left(  \pm\frac{s^{2}\pm4s-1}%
{s},..\right)  \\\hline
x_{Q_{1}}=\frac{1}{16}\left(  ts+s^{2}-4s-1\right)  \left(  ts-s^{2}%
+4s+1\right)  \\
\left(  t^{2}s^{2}-\left(  s^{4}+14s^{2}+1\right)  +8s\left(  s^{2}+1\right)
\right)  \\
x_{Q_{2}=}\frac{1}{16}\frac{\left(  s-1\right)  ^{2}}{\left(  s+1\right)
^{2}}\left(  ts+s^{2}+4s-1\right)  \\
\left(  ts-s^{2}+4s+1\right)  \left(  t^{2}s^{2}-\left(  s^{4}+14s^{2}%
+1\right)  -8s\left(  s^{2}+1\right)  \right)
\end{array}&  & \\\hline\hline
\#23 &
\begin{array}
[c]{c}%
\begin{array}
[c]{c}%
y^{2}+\left(  2t^{2}-tk+1\right)  yx=x\left(  x-t^{2}\right)  \left(
x-t^{4}\right)  \\\hline
I_{0}^{\ast}\left(  \infty\right)  ,I_{12}\left(  0\right)  ,6I_{1}\left(
\frac{1}{k\pm2},..\right)
\end{array}
\\\hline 
y^{2}=x\left(  x+\frac{1}{4}\left(  t(k-2)-1\right)  \left(  4t^{2}-\left(
k+2\right)  t+1\right)  \right)  \\
\left(  x+\frac{1}{4}\left(  t\left(  k+2\right)  -1\right)  \left(
4t^{2}-\left(  k-2\right)  t+1\right)  \right)  \\\hline
I_{0}^{\ast}\left(  \infty\right)  ,I_{6}\left(  0\right)  ,6I_{2}\left(
\frac{1}{k\pm2},..\right)  \\\hline%
\begin{array}
[c]{c}%
x_{Q_{1}}=\frac{-1}{4}\left(  4t^{2}-\left(  k-2\right)  t+1\right)  \left(
t\left(  k-2\right)  -1\right)  ;\\
x_{Q_{2}=}\frac{-1}{4}\frac{k-2}{k+2}\left(  t\left(  k+2\right)  -1\right)
\left(  4t^{2}-\left(  k-2\right)  t+1\right)
\end{array}
\end{array}
&
\begin{array}
[c]{c}%
\#8\\
\#26\\
\#24
\end{array}
&
\begin{array}
[c]{c}%
\frac{y-y_{2Q}+\frac{k}{2}\left(  x-x_{2Q}\right)  }{t\left(  x-x_{2Q}\right)
}\\
\frac{2y-\left(  t-s\right)  \left(  ts-1\right)  x}{t\left(  x-ts\left(
ts-1\right)  ^{2}\right)  }\\
\frac{\left(  y-y_{2Q_{2}}\right)  +\frac{s^{2}+1}{2}\left(  x-x_{2Q_{2}%
}\right)  }{\left(  t+1\right)  \left(  x-x_{2Q_{2}}\right)  }%
\end{array}
\\\hline\hline
\#24 &
\begin{array}
[c]{c}%
\begin{array}
[c]{c}%
y^{2}+\left(  s^{2}+1\right)  tyx=x\left(  x-t^{2}s^{2}\right)  \left(
x-s^{2}t\left(  t+1\right)  ^{2}\right)  \\\hline
2I_{2}^{\ast}\left(  \infty,0\right)  ,I_{4}\left(  -1\right)  ,4I_{1}%
\end{array}
\\\hline
y^{2}=x^{3}+\frac{1}{2}t\left(  4t^{2}s^{2}-t\left(  s^{4}-10s^{2}+1\right)
+4s^{2}\right)  x^{2}\\
+\frac{1}{16}t^{2}\left(  4t^{2}s^{2}+\left(  8s^{2}-\left(  s-1\right)
^{4}\right)  t+4s^{2}\right)  \\
\left(  4t^{2}s^{2}+\left(  8s^{2}-\left(  s+1\right)  ^{4}\right)
t+4s^{2}\right)  x\\\hline
2I_{1}^{\ast}\left(  \infty,0\right)  ,5I_{2}\left(  -1,\right)  \\\hline
x_{Q_{1}}=\frac{1}{4}(2t^{2}s^{2}+t\left(  s^{2}-1\right)  -2)^{2};\\
x_{Q_{2}}=-\frac{1}{4}t\left(  4t^{2}s^{2}+\left(  8s^{2}-\left(  s+1\right)
^{4}\right)  t+4s^{2}\right)
\end{array}
&  & \\\hline\hline
\#26 &
\begin{array}
[c]{c}%
y^{2}+(ts-1)(t-s)xy=x\left(  x-t^{2}s^{2}\right)  ^{2}\\\hline
2I_{8}\left(  \infty,0\right)  ,I_{2}\left(  s,\frac{1}{s}\right)
,4I_{1}\\\hline
y^{2}=x\left(  x+4t^{2}s^{2}\right)  \left(  x+\frac{1}{4}\left(  t-s\right)
^{2}\left(  st-1\right)  ^{2}\right)  \\\hline
I_{4}\left(  \infty,0,s,\frac{1}{s}\right)  ,4I_{2}\\\hline
x_{Q_{1}}=ts\left(  ts-1\right)  ^{2}%
\end{array}
&  & \\\hline
\end{array}
\]
\caption{ Fibrations with discriminant $12\times 8$ (Fibrations of the Kummer $K_k$)} \label{Ta:Iso8}
\end{table}

%\[%
\begin{table}[tp]\footnotesize

\begin{center}
\begin{tabular}
[c]{|c|c|c|c|}\hline
\text{No} & \text{Weierstrass Equation} & \text{From} & \text{Param.}\\\hline
\#7 &
$\begin{array}
[c]{c}%
\begin{array}
[c]{c}%
y^{2}=x^{3}+\frac{1}{4}t\left(  t\left(  s^{4}-10s^{2}+1\right)
+8s^{4}\right)  x^{2}-t^{2}s^{2}\left(  t-s^{2}\right)  ^{3}x\\\hline
III^{\ast}\left(  \infty\right)  ,I_{1}^{\ast}\left(  0\right)  ,I_{6}\left(
s^{2}\right)  ,2I_{1}%
\end{array}
\\\hline
y^{2}=x^{3}-\frac{1}{2}t\left(  t\left(  s^{4}-10s^{2}+1\right)
+8s^{4}\right)  x^{2}\\
+\frac{1}{16}t^{3}\left(  64t^{2}s^{2}+\left(  s^{8}-20s^{6}-90s^{4}%
-20s^{2}+1\right)  t+16s^{4}\left(  s^{2}+1\right)  ^{2}\right)  x\\\hline
III^{\ast}\left(  \infty\right)  ,I_{2}^{\ast}\left(  0\right)  ,I_{3}\left(
s^{2}\right)  ,2I_{2}\\%\hline
\end{array}$
&  & \\\hline\hline
\#9 &
$\begin{array}
[c]{c}%
\begin{array}
[c]{c}%
y^{2}=x^{3}+\frac{1}{4}\left(  s^{4}+14s^{2}+1\right)  t^{2}x^{2}+t^{3}%
s^{2}\left(  s^{2}+t\right)  \left(  ts^{2}+1\right)  x\\\hline
2III^{\ast}\left(  \infty,0\right)  ,2I_{2}\left(  -s^{2},-\frac{1}{s^{2}%
}\right)  ,2I_{1}\\\hline
\end{array}
\\
y^{2}=x^{3}-\frac{1}{2}\left(  s^{4}+14s^{2}+1\right)  t^{2}x^{2}\\
-\frac{1}{16}t^{3}\left(  64t^{2}s^{4}+t\left(  -s^{8}+36s^{6}-198s^{4}%
+36s^{2}-1\right)  +64s^{4}\right)  x\\\hline
2III^{\ast}\left(  \infty,0\right)  ,2I_{1}\left(  -s^{2},-\frac{1}{s^{2}%
}\right)  ,2I_{2}\\\hline
x_{Q}=\frac{1}{4}\frac{\left(  t+1\right)  ^{2}\left(  2t^{2}s^{2}+t\left(
s^{4}-6s^{2}+1\right)  +2s^{2}\right)^2  }{\left(  s^{2}-1\right)  ^{2}\left(
t-1\right)  ^{2}}%
\end{array}$
& \#20 &$ \frac{y}{\left(  ts-1\right)  ^{4}}$\\\hline\hline
\#14 &
$\begin{array}
[c]{c}%
y^{2}=x^{3}+\left(  t^{3}\left(  s^{4}+1\right)  +\frac{1}{4}t^{2}\left(
s^{4}+14s^{2}+1\right)  +ts^{2}\right)  x^{2}+t^{6}s^{4}x\\\hline
I_{0}^{\ast}\left(  \infty\right)  ,I_{8}^{\ast}\left(  0\right)
,4I_{1}\left(  \frac{-1}{4},\frac{-4s^{2}}{\left(  s^{2}-1\right)  ^{2}%
},..\right)  \\\hline
y^{2}=x\left(  x-\frac{1}{4}\left(  s^{2}+1\right)  ^{2}t^{3}-\frac{1}%
{4}\left(  s^{4}+14s^{2}+1\right)  t^{2}-ts^{2}\right)  \\
\left(  x-\frac{1}{4}\left(  s^{2}-1\right)  ^{2}t^{3}-\frac{1}{4}\left(
s^{4}+14s^{2}+1\right)  t^{2}-ts^{2}\right)  \\\hline
I_{0}^{\ast}\left(  \infty\right)  ,I_{4}^{\ast}\left(  0\right)
,4I_{2}\left(  \frac{-1}{4},\frac{-4s^{2}}{\left(  s^{2}-1\right)  ^{2}%
},..\right)  \\\hline
x_{Q}=\frac{1}{4}t^{2}\left(  s^{2}-1\right)  ^{2}\left(  4t+1\right)
\end{array}$
& \#15 & $\frac{t^{2}s^{2}}{x+t^{3}s^{2}-\frac{1}{4}t^{2}\left(  s^{2}%
-1\right)  ^{2}}$\\\hline\hline
\#15 &
$\begin{array}
[c]{c}%
\begin{array}
[c]{c}%
\left(  y-tx\right)  \left(  y-s^{2}tx\right)  =x\left(  x-ts^{2}\right)
\left(  x-ts^{2}\left(  t+1\right)  ^{2}\right)  \\\hline
I_{4}^{\ast}\left(  \infty\right)  ,I_{1}^{\ast}\left(  0\right)
,I_{4}\left(  -1\right)  ,3I_{1}\left(  \frac{1}{4}\left(  \frac{s^{2}-1}%
{s}\right)  ^{2},..\right)
\end{array}
\\\hline
y^{2}=x\left(  x+t^{3}s^{2}-\frac{1}{4}t^{2}(s^{2}-1)^{2}\right)  \\
\left(  x+t^{3}s^{2}-\frac{1}{4}(s^{2}-4s-1)(s^{2}+4s-1)t^{2}+4ts^{2}\right)
\\\hline
I_{2}^{\ast}\left(  \infty,0\right)  ,4I_{2}\left(  -1,\frac{1}{4}\left(
\frac{s^{2}-1}{s}\right)  ^{2},..\right)  \\\hline
x_{Q}=\frac{1}{4}t^{2}(s^{2}-1)^{2}%
\end{array}$
& \#20 & $\frac{y}{x\left(  t-s\right)  ^{2}}$\\\hline\hline
\#20 &
$\begin{array}
[c]{c}%
\begin{array}
[c]{c}%
y^{2}-(t^{2}s-\left(  s^{2}+1\right)  t+3s)yx-s^{2}\left(  t-s\right)  \left(
ts-1\right)  y=x^{3}\\\hline
I_{12}\left(  \infty\right)  ,2I_{3}\left(  s,\frac{1}{s}\right)
,2I_{2}\left(  0,\frac{s^{2}+1}{s}\right)  ,2I_{1}%
\end{array}
\\\hline
y^{2}+\left(  t^{2}s-(s^{2}+1)t-3s\right)  yx-s\left(  t-s\right)  ^{2}\left(
ts-1\right)  ^{2}y=x^{3}\\\hline
3I_{6}\left(  \infty,s,\frac{1}{s}\right)  ,2I_{2},2I_{1}\left(  0,\frac
{s^{2}+1}{s}\right)
\end{array}$
&  & \\\hline
\end{tabular}
\end{center}
\caption{ Fibrations with discriminant $12\times 2$ (Fibrations of $S_k$)} \label{Ta:Iso2}
\end{table}

%\end{array}
%\]

%\bigskip

For the last part of Theorem 1.1 we give properties of $S_k$. First we prove that $S_k$ is the Jacobian variety of some genus $1$ fibrations of $K_k$.

Starting with the fibration $\#26$-i and Weierstrass equation%
\[
y^{2}=x\left(  x+4t^{2}s^{2}\right)  \left(  x+\frac{1}{4}\left(  t-s\right)
^{2}\left(  ts-1\right)  ^{2}\right)
\]
the new parameter $m:=\frac{y}{t\left(  x+\frac{1}{4}\left(  t-s\right)
^{2}\left(  ts-1\right)  ^{2}\right)  }$ defines an elliptic fibration of
$\#26$-i with Weierstrass equation
\[
E_{m}:Y^{2}-m\left(  s^{2}+1\right)  YX=X\left(  X-s^{2}m^{2}\right)  \left(
X+\frac{1}{4}\left(  2m-s\right)  ^{2}\left(  2m+s\right)  ^{2}\right)
\]
and singular fibers are of type $4I_{4}\left(  0,\pm\frac{1}{2}s,\infty\right),8I_{1}.$

Then setting as new parameter $n=\frac{X}{m^{2}}$, it follows a genus one
curve in $m$ and $Y.$ Its equation, of degree 2 in $Y$, can be transformed
in
\[
w^{2}=-16n(-n+s^{2})m^{4}+n(s^{4}\left(  8+n\right)  -10ns^{2}+n\left(
1+4n\right)  )m^{2}-ns^{4}(-n+s^{2}).
\]
Let us recall the formulae giving the jacobian of a genus one curve defined by the equation
$y^{2}=ax^{4}+bx^{3}+cx^{2}+dx+e$. If $c_{4}=2^{4}(12ae-3bd+c^{2})$ and
$c_{6}=2^{5}(72ace-27ad^{2}-27b^{2}e+9bcd-2c^{3})$, then the equation of the
Jacobian curve is
\[
\bar{y}^{2}=\bar{x}^{3}-27c_{4}\bar{x}-54c_{6}.%
\]
In our case we obtain
\[
y^{2}  =x\left(  x+n^{3}s^{2}-\frac{1}{4}n^{2}\left(  s^{2}-1\right)^{2}\right)  \]
\[\left(  x+n^{3}s^{2}-\frac{1}{4}\left(  s^{2}-4s-1\right)  \left(s^{2}+4s-1\right)  n^{2}+4ns^{2}\right),\]

%\[
%y^{2}=x\left(  x+n^{3}s^{2}-\frac{1}{4}n^{2}\left(  s^{2}-1\right)
%^{2}\right)  \left(  x+n^{3}s^{2}-\frac{1}{4}\left(  s^{2}-4s-1\right)
%\left(  s^{2}+4s-1\right)  n^{2}+4ns^{2}\right)
%\]
which is precisely the fibration $\#15$-i.

\begin{remark}
%Another result can be derived from $E_{m}$  using
%the new parameter
Using the new parameter $p=\frac{Y}{m^{2}\left(  X+\frac{1}{4}\left(  2m-s\right)
^{2}\left(  2m+s\right)  ^{2}\right)  }$ another result can be derived from $E_m$ leading to
%\[
%E_{p}:Y^{2}-2s\left(  2p-1\right)  \left(  2p+1\right)  YX-X\left(
%X+64s^{2}p^{2}\right)  \left(  X+\left(  2sp+1\right)  \left(  2sp-1\right)
%\left(  s+2p\right)  \left(  s-2p\right)  \right)
%\]
\begin{align*}
E_{p}  & :Y^{2}-2s\left(  2p-1\right)  \left(  2p+1\right)  YX=\\
& X\left(  X+64s^{2}p^{2}\right)  \left(  X+\left(  2sp+1\right)  \left(
2sp-1\right)  \left(  s+2p\right)  \left(  s-2p\right)  \right),
\end{align*}
with singular fibers $2I_{0}^{\ast},4I_{2},4I_{1}.$ From $E_{p}$ and the
new parameter $k=\frac{X}{p^{2}}$ we obtain a genus one fibration whose
jacobian is $\#14$-i. 

Starting from the fibration $\#26-i$, the parameter $q=\frac{x}{t^{2}}$
leads to a genus one fibration  whose jacobian is the fibration
$ \#20$-i. 
\end{remark}

\subsection{Transcendental and N\'eron-Severi lattices of the surface $S_k$}

\begin{lemma}
The five fibrations $\#7-i$, $\#9-i$, $\#14-i$, $\#15-i$, $\#20-i$ are fibrations of the same $K3$ surface $S_k$ with transcendental lattice
\[
T_{S_k}=\langle (-2)\rangle\oplus \langle 2\rangle \oplus \langle 6 \rangle
\]
and N\'eron-Severi lattice 
\[NS(S_k)=U\oplus E_8(-1) \oplus E_7(-1) \oplus \langle (-2)\rangle \oplus \langle (-6) \rangle .\]
Moreover these fibrations specialise in fibrations of $Y_2$ for $k=2$.
\end{lemma}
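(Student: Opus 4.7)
The plan unfolds in three stages. First, I would show that the five Weierstrass models lie on a common $K3$ surface. The $2$- and $3$-neighbor links recorded in the last two columns of Table~\ref{Ta:Iso2}, together with the intermediate $I_8\,IV^*$ fibration introduced in the paragraph preceding the lemma, provide explicit changes of elliptic parameter between the consecutive pairs $\#7\text{-}i\leftrightarrow\#15\text{-}i$, $\#15\text{-}i\leftrightarrow\#20\text{-}i$, $\#15\text{-}i\leftrightarrow\#14\text{-}i$ and $\#20\text{-}i\leftrightarrow\#9\text{-}i$. Since such a link preserves the underlying $K3$ surface, composing these birational identifications identifies all five models with a single surface, which I will denote $S_k$.

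Second, I would determine $NS(S_k)$ from the simplest of the five elliptic structures. Fibration $\#15$-i has reducible fibres $2I_2^*+4I_2$, contributing a trivial sublattice $U\oplus D_6(-1)^{\oplus 2}\oplus A_1(-1)^{\oplus 4}$, together with the $2$-torsion section and the rational section $Q$ of $x$-coordinate $\tfrac14 t^2(s^2-1)^2$. Computing the height of $Q$ via Shioda's local contributions and then applying the Shioda-Tate formula yields $|\det NS(S_k)|=24$; the gluing of the trivial sublattice through the $2$-torsion section is then identified (via Nikulin's discriminant-form calculus) with the even lattice $U\oplus E_8(-1)\oplus E_7(-1)\oplus\langle -2\rangle\oplus\langle -6\rangle$ of the stated shape, which I would cross-verify on at least one other of the five fibrations (say $\#20\text{-}i$ or $\#9\text{-}i$).

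Third, the transcendental lattice $T_{S_k}$ is the orthogonal complement of $NS(S_k)$ in the unimodular lattice $H^2(S_k,\mathbb Z)=U^{\oplus 3}\oplus E_8(-1)^{\oplus 2}$, or equivalently the unique (by Nikulin's uniqueness theorem) even lattice of signature $(2,1)$ whose discriminant form is $-q_{NS(S_k)}$. A direct calculation on the discriminant group $\mathbb Z/2\oplus\mathbb Z/2\oplus\mathbb Z/6$ exhibits this lattice as $\langle -2\rangle\oplus\langle 2\rangle\oplus\langle 6\rangle$; as a sanity check, the isogeny inclusion of \S4.1 gives $T_{Y_k}(2)=U(2)\oplus\langle 24\rangle\subseteq T_{S_k}$ with index $2$, matching the determinants $96=4\cdot 24$. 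The specialisation at $k=2$ (equivalently $s=1$) is routine: each Weierstrass coefficient in Table~\ref{Ta:Iso2} is polynomial in $s$ with no vanishing denominator, so the specialised equations define non-degenerate elliptic surfaces, and their Kodaira types and Mordell-Weil data match corresponding fibrations of $Y_2$ catalogued in \cite{BL}. The main obstacle will be the Shioda-Tate and glue-vector bookkeeping of Step two; once the Néron-Severi lattice is pinned down on one fibration, the remaining fibrations serve as independent cross-checks rather than as additional obstacles.
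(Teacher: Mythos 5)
Your outline is essentially sound and its first and third steps track the paper's own argument: the paper likewise takes the $2$-/$3$-neighbor links of Table~\ref{Ta:Iso2} (plus the intermediate $I_8\,IV^*$ fibration reached from $\#7$-i) as the proof that the five models share one surface, and likewise pins down $T_{S_k}$ by computing $-q_{NS(S_k)}$ and invoking uniqueness in the genus, then recovers $NS(S_k)$ from an explicit primitive embedding of $T_{S_k}$ into the $K3$ lattice. Where you diverge is in how the discriminant form of $NS(S_k)$ is extracted. You propose to work directly on $\#15$-i (rank $1$, torsion $(\mathbb Z/2)^2$), which forces you to compute the full overlattice generated by the trivial lattice, two independent $2$-torsion sections and the infinite section of height $3/2$ --- exactly the ``glue-vector bookkeeping'' you flag as the main obstacle. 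The paper sidesteps this by passing from $\#7$-i, via the parameter $m=X/t$, to an auxiliary fibration $\Sigma_k$ with fibers $II^*+I_2^*+I_3+I_2+I_1$, rank $0$ and trivial torsion, so that $NS(S_k)=U\oplus E_8\oplus D_6\oplus A_2\oplus A_1$ on the nose and the discriminant form is read off with no gluing at all; it then adds a second, independent identification of all five fibrations by running Kneser--Nishiyama on $M=A_1\oplus A_1\oplus A_5$ and matching ranks, torsion groups and section heights. Your route is workable but strictly heavier; the paper's auxiliary fibration is the labor-saving device you are missing.

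One justification in your third step needs repair. For the uniqueness of $T_{S_k}$ you appeal to ``Nikulin's uniqueness theorem,'' but that criterion requires $\operatorname{rk} \geq 2+\ell(G)$, and here $\operatorname{rk} T_{S_k}=3$ while the discriminant group $\mathbb Z/2\oplus\mathbb Z/2\oplus\mathbb Z/6$ has $\ell=3$ (its $2$-part is $(\mathbb Z/2)^3$), so the hypothesis fails. The correct argument, which the paper uses, is that an indefinite even lattice of determinant as small as $24$ lies in a one-class genus (Conway--Sloane, p.~395), so signature plus discriminant form still determine it. The conclusion is unaffected, but the cited theorem does not apply. Your sanity check $T_{Y_k}(2)\subseteq T_{S_k}$ of index $2$ and the specialisation argument at $s=1$ are fine, the latter being settled in the paper by direct comparison with the tables of \cite{BL} in Section~6.
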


\begin{proof}
These five fibrations are respectively the fibrations given in Table 6 and recalled below with the type of their singular fibers, their rank and torsion group:
\[
\begin{matrix}
\#7-i & 2A_1A_2D_6E_7 & \text{rk} \,\,0 & \mathbb Z/2\mathbb Z\\
\#9-i & 2A_12E_7 & \text{rk}\,\, 1 &  \mathbb Z/2\mathbb Z\\
\#14-i & 4A_1D_4D_8 & \text{rk}\,\, 1 &  \mathbb Z/2\mathbb Z \times  \mathbb Z/2\mathbb Z \\
\#15-i & 4A_12D_6 & \text{rk}\,\, 1 &  \mathbb Z/2\mathbb Z \times  \mathbb Z/2\mathbb Z \\
\#20-i & 2A_1 3A_5 & \text{rk}\,\, 0 &  \mathbb Z/6\mathbb Z.
\end{matrix}
\]
We already know from the above results that they are fibrations of $S_k$ but we give below another proof of this fact.
Denote $S_k$ the $K3$ surface defined by the elliptic fibration $\#7$-i with Weierstrass equation given in Table 6 and draw the graph of the singular fibers, the zero and two-torsion sections of the elliptic fibration

%\[
\begin{align*}
Y^2={X}^{3}+\left (\left (-1/2\,{s}^{4}+5\,{s}^{2}-1/2\right ){t}^{2}-4\,{
s}^{4}t\right ){X}^{2}\\%\]
 \left (4\,{s}^{2}{t}^{5}+\left (1/16\,{s}^{8}-{\frac {45}{8}}\,{s}^{4}
-5/4\,{s}^{2}-5/4\,{s}^{6}+1/16\right ){t}^{4}+{s}^{4}\left ({s}^{2}+1
\right )^{2}{t}^{3}\right )X \\%
\end{align*}
%\]
with singular fibers $III^*(\infty),I^*_2(0),I_3(s^2),2I_1(t_1,t_2)$.

\bigskip

\begin{center}
\begin{tikzpicture}[scale=1.1]%
%\draw[fill=black](0.1,1.1)circle (0.05cm)node [below]{$M$} node [above]{$2$};
\draw[fill=black](1.1,1.1)circle (0.05cm)node [below] {$t=\infty$};
\draw[fill=blue](2.1,1.1)circle (0.05cm);% node [below] {$e_6$};
\draw[fill=blue](3.1,1.1)circle (0.05cm);% node [below]{$e_5$};
\draw[fill=blue](4.1,1.1)circle (0.05cm);% node [below]{$e_4$};
\draw[fill=blue](5.1,1.1)circle (0.05cm);%node [below]{$e_3$};
\draw[fill=blue](6.1,1.1)circle (0.05cm);%node [below]{$e_1$};
\draw[fill=blue](7.1,1.1)circle (0.05cm);%[below]{$e_1$};
\draw[fill=blue](4.1,2.1)circle (0.05cm) ;%node [right] {$e_2$};
%\draw[fill=black](6.1,2.1)circle (0.05cm);
\draw(1.1,1.1)--(7.1,1.1);
%\draw(1.1,1.1)--(1.1,2.1);
\draw(4.1,1.1)--(4.1,2.1);

\draw(3.1,4.1)--(7.1,4.1);
\draw(4.1,4.1)--(4.1,5.1);
\draw(6.1,4.1)--(6.1,5.1);
%\draw[fill=black](0.1,4.1)circle (0.05cm);%node [above]{$\omega$}node [below]{$-2$};
%\draw[fill=black](1.1,4.1)circle (0.05cm);%node [below] {$a_2$};
%\draw[fill=black](2.1,4.1)circle (0.05cm);%node [below] {$a_3$};
\draw[fill=green](3.1,4.1)circle (0.05cm)node [below]{$t=0$};%node [above]{$h=\infty$};
\draw[fill=green](4.1,4.1)circle (0.05cm);%node [below]{$d_4$};
\draw[fill=green](5.1,4.1)circle (0.05cm);%node [below]{$d_3$};
\draw[fill=black](6.1,4.1)circle (0.05cm);%node [below]{$d_2$};
\draw[fill=blue](7.1,4.1)circle (0.05cm);
\draw[fill=green](4.1,5.1)circle (0.05cm);%node [right]{$d_6$};
\draw[fill=yellow](6.1,5.1)circle (0.05cm);%node [right]{$d_1$};

\draw[dashed,red](7.1,1.1)--(7.1,4.1);
\draw[fill=blue](7.1,2.6)circle (0.09cm)node [left]{$0$};
\draw[dashed,red](7.1,2.6)--(9.1,2.6);
\draw[dashed,red](7.1,2.6)--(9.1,4.1);
\draw[dashed,red](7.1,2.6)--(9.1,1.1);
\draw[fill=black](9.1,4.1)circle (0.05cm);%node [above]{$\delta=-4/27$};;
\draw(9.1,4.15)--(10.1,4.15);
\draw(9.1,4.05)--(10.1,4.05);
\draw[fill=green](10.1,4.1)circle (0.05cm);
\draw[fill=black](9.1,2.6)circle (0.05cm) node [below]{$t=s^2$};;
\draw[fill=black](9.1,1.1)circle (0.05cm);
\draw(9.1,1.15)--(10.1,1.15);
\draw(9.1,1.05)--(10.1,1.05);
\draw[fill=green](10.1,1.1)circle (0.05cm);

\draw(9.1,2.6)--(10.1,2.9);
\draw(9.1,2.6)--(10.1,2.3);
\draw[fill=red](10.1,2.9)circle (0.05cm);
\draw[fill=red](10.1,2.3)circle (0.05cm);
\draw(10.1,2.3)--(10.1,2.9);

\draw[fill=green](4.1,3.2)circle (0.09cm) node[left]{$T$};%n
\draw[dashed,red](4.1,3.2)--(1.1,1.1);
\draw[dashed,red](4.1,3.2)--(3.1,4.1);
\draw[dashed,red](4.1,3.2)--(9.1,2.6);
\draw[dashed,red](4.1,3.2)--(10.1,1.1);
\draw[dashed,red](4.1,3.2)--(10.1,4.1);

%\draw(9.1,2.6)--(10.8,2.6);
%\draw(9.1,2.7)--(10.8,2.7);
%\draw[fill=black](9.1,2.6)circle (0.065cm)node [above]{$\delta=-2$};
%\draw[fill=black](10.8,2.6)circle (0.065cm);
%\draw[dashed] (0.6,5.6)--(7.8,5.6);
%\draw[dashed]  (0.6,5.6)--(0.6,0.1);
%\draw[dashed] (7.8,5.6)--(7.8,0.1);
%\draw[dashed] (0.6,0.1)--(7.8,0.1);

\end{tikzpicture}
\end{center} 

With the parameter $m=\frac{X}{t}$ we obtain another fibration with singular fibers $II^*(\infty)$ (in blue), $I^*_2(0)$ (in green), $I_3(\frac{1}{4}s^2(s^2-1)^2)$ (part of it in red), $I_2(4s^4)$, $I_1(\sigma_0)$ (in yellow), where  $\sigma_{0}=-\frac{\left(  s^{2}-6s+1\right)  \left(  s^{2}+6s+1\right)
\left(  s^{2}+1\right)  ^{4}}{1728s^{2}}.$

This new fibration $\Sigma_k$ has no torsion, rank $0$, Weierstrass equation

\begin{align*}
y^{2} &  =x^{3}+2m((-s^{4}+10s^{2}-1)m+2s^{4}(s^{2}+1)^{2})x^{2}\\
&  +\left(  m-4s^{4}\right)  m^{3}\left(  (s^{8}-20s^{6}-90s^{4}%
-20s^{2}+1)\right)  x+256m^{5}s^{2}(m-4s^{4})^{2}%
\end{align*}

%Taking ... as new elliptic parameter we get another elliptic fibration $\Sigma_k$ of the $K3$ surface $S_k$, without torsion, rank $0$ and singular fibers $II^*$, $I_2^*$, $I_3$, $I_2$. Hence the N\'eron-Severi group of of this fibration is
and N\'eron-Severi group
\[NS(\Sigma_k)=U\oplus E_8 \oplus D_6 \oplus A_2 \oplus A_1.\]
By Morrison (\cite{M}, Corollary 2.10 ii), the N\'eron-Severi group of an algebraic $K3$ surface $X$ with $12\leq \rho(X) \leq 20$ is uniquely determined by its signature and discriminant form. Thus we compute $q_{NS(S_k)}$ with the help of the fibration $\Sigma_k$.
From
\[D_6^*/D_6=\langle [1]_{D_6},[3]_{D_6} \rangle \,\,\,\,\text{and}\,\,\,\, q_{D_6}([1]_{D_6})=q_{D_6}([3]_{D_6} )=-\frac{3}{2},\]

we deduce the discriminant form, since $b_{D_6}( [1]_{D_6},[3]_{D_6})=0$,
\begin{align*}
(G_{NS(S_k)},q_{NS(S_k)}) &=\mathbb Z/2 \mathbb Z(-\frac{3}{2}) \oplus \mathbb Z/2 \mathbb Z(-\frac{3}{2})\oplus \mathbb Z/3 \mathbb Z(-\frac{2}{3})\oplus \mathbb Z/2 \mathbb Z(-\frac{1}{2}) \,\,\,\text{mod.} 2\mathbb Z\\
     &=\mathbb Z/2 \mathbb Z(\frac{1}{2})\oplus \mathbb Z/6 \mathbb Z(-\frac{1}{6})\oplus \mathbb Z/2 \mathbb Z(-\frac{1}{2}).
\end{align*}
From Morrison (\cite{M} Theorem 2.8 and Corollary 2.10) there is a unique primitive embedding of $NS(S_k)$ into the $K3$-lattice $\Lambda =E_8(-1)^2 \oplus U^3$, whose orthogonal is by definition the transcendental lattice $T_{S_k}$. Now from Nikulin(\cite{Nik} Proposition 1.6.1), it follows 
\[G_{NS(S_k)}\simeq (G_{NS(S_k)})^{\perp}=G_{T_{S_k}},\,\,\,\,\,q_{T_{S_k}}=-q_{NS(s_k)}.\]
In other words the discriminant form of the transcendental lattice is
\[(G_{T_{S_k}},q_{T_{S_k}})=\mathbb Z/2 \mathbb Z(-\frac{1}{2})\oplus \mathbb Z/6 \mathbb Z(\frac{1}{6})\oplus \mathbb Z/2 \mathbb Z(\frac{1}{2}).\]

From this last relation we prove that $T_{S_k}=\langle -2 \rangle \oplus \langle 6 \rangle \oplus \langle 2 \rangle$. Denoting $T'$ the lattice $T'= \langle -2 \rangle \oplus  \langle 6 \rangle \oplus \langle 2 \rangle$, we observe that $T'$ and $T_{S_k}$ have the same signature and discriminant form. Since $|\det (T')|=24$ is small, there is only one equivalence class of forms in a genus, meaning that such a transcendental lattice is, up to isomorphism, uniquely determined by its signature and discriminant form (\cite{Co} p. 395). 

Now computing a primitive embedding of $T_{S_k}$ into $\Lambda$, since by Morrison (\cite{M} Corollary 2.10 i) this embedding is unique, its orthogonal provides $NS(S_k)$.
Take the primitive embedding $\langle(-2)\rangle=\langle e_2 \rangle \hookrightarrow E_8$, $\langle 2 \rangle=\langle u_1+u_2 \rangle \hookrightarrow U$, $ \langle 6 \rangle= \langle u_1+3u_2 \rangle \hookrightarrow U$, $(u_1,u_2)$ denoting a basis of $U$. Hence we deduce
\[NS(S_k)=U\oplus E_8(-1) \oplus E_7(-1) \oplus (-2) \oplus (-6).\]

Using their Weierstrass equations and a 2-neighbor method \cite{El}, it was proved in the previous subsection that all the fibrations $\#7$-i, $\#9$-i, $\#14$-i, $\#15$-i, $\#20$-i are on the same $K3$-surface. We can recover this result, since we know the transcendental lattice, using the Kneser-Nishiyama method. 

In that purpose, embed $T_{S_k}(-1)$ into $U\oplus E_8$ in the following way: $(-2)\oplus (-6)$ primitively embedded in $E_8$ as in Nishiyama (\cite{Nis} p. 334) and $ \langle 2 \rangle= \langle u_1+u_2 \rangle \hookrightarrow U$. We obtain $M=(T_{S_k}[-1])_{U\oplus E_8}^{\perp}=A_1\oplus A_1 \oplus A_5$. Now all the elliptic fibrations of $S_k$ are obtained from the primitive embeddings of $M$ into the various Niemeier lattices, as explained in section 2.

We identify some of these elliptic fibrations with fibrations $\#7$-i, $\#9$-i, $\#14$-i, $\#15$-i, $\#20$-i in exhibiting their torsion and infinite sections as explained in Bertin-Lecacheux \cite{BL1}, computing contributions and heights using \cite{Sc-Shio} p. 51-52. Using the Weierstrass equations given in Table 6, we compute the different local contributions and heights. Finally the identification is performed using \cite{Sc-Shio} (11.9).

\subsubsection{} Take the primitive embedding into $Ni(D_{10} E_7^2)$, given by
$A_5= \langle e_2,e_4,e_5,e_6,e_7 \rangle \hookrightarrow E_7$ and $A_1^2= \langle d_{10},d_7 \rangle \hookrightarrow D_{10}$.

Since $(A_5)_{E_7}^{\perp}=A_2$ and $(A_1^2)_{D_{10}}^{\perp}=A_1 \oplus A_1 \oplus D_6$, it follows $N=N_{\text{root}}=2A_1A_2D_6E_7$, $\det N=24\times 4$, thus the rank is $0$ and the torsion group $\mathbb Z/2 \mathbb Z$. Hence this fibration can be identified with the elliptic fibration $\#7$-i.

\subsubsection{} The primitive embedding is into $Ni(D_{10} E_7^2)$, given by 
\[A_5 \oplus A_1^2=\]
\[ \langle d_{10},d_8,d_7,d_6,d_5,d_{10}+d_9+2(d_8+d_7+d_6+d_5+d_4)+d_3,d_3 \rangle \hookrightarrow D_{10}.\]
We get 
\[(A_5 \oplus A_1^2)_{D_{10}}^{\perp}=(-6)\oplus  \langle x \rangle \oplus  \langle d_1 \rangle =(-6) \oplus A_1 \oplus A_1\]
with 
\[x=d_9+d_{10}+2(d_8+d_7+d_6+d_5+d_4+d_3+d_2)+d_1\]
and
\[(-6)=3d_9+2d_{10}+4d_8+3d_7+2d_6+d_5.\]
%and relation
%\[2[3]_{D_{10}}=(-6)+x\]

Thus $N_{\text{root}}=A_1 A_1 E_7^2$ and the rank of the fibration is $1$. Since $2[2]_{D_{10}}=x+d_1$ and there is no other relation with $[1]_{D_{10}}$ or  $[3]_{D_{10}}$, among the glue vectors $ \langle [1,1,0] \rangle, \langle [3,0,1] \rangle$ generating $Ni(D_{10} E_7^2)$, only $ \langle [2,1,1] \rangle $ contributes to torsion. 

Hence the torsion group is $\mathbb Z/2 \mathbb Z$. Moreover the 2-torsion section is
\[2F+0+[[2],[1],[1]]\]
with height $4-(1/2+1/2+3/2+3/2)=0$.
The infinite section is 
\[3F+0+[(-6),0,0]\]
with height $6$. Hence this fibration can be identified with the fibration $\#9$-i.

\subsubsection{}  The primitive embedding is into $Ni(D_{8}^3)$, 
given by $A_5= \langle d_8,d_6,d_5,d_4,d_3 \rangle \hookrightarrow D_8^{(1)}$ and $A_1
^2= \langle d_8,d_1 \rangle \hookrightarrow D_8^{(2)}$. We compute 
$(A_5)_{D_8}^{\perp}=(-6)\oplus  \langle x_1=(-2) \rangle \oplus \langle d_1 \rangle$ with 
$x_1=d_7+d_8+2(d_6+d_5+d_4+d_3+d_2)+d_1$
\[
\begin{matrix}
(A_1^2)_{D_8}^{\perp}= & \langle d_7 \rangle \oplus \langle x_1=d_7+d_8+2(d_6+d_5+d_4+d_3+d_2)+d_1 \rangle \\
  & \oplus \langle d_5,d_4,x_3=d_7+d_8+2d_6+d_5,d_3 \rangle =A_1\oplus A_1 \oplus D_4.
\end{matrix}
\]
We deduce $N_{\text{root}}=4A_1D_4D_8$ (hence the fibration has rank $1$) and the relations
\begin{align}
2[2]_{D_8}=x_1+d_1\\
2([2]_{D_8}-(d_1+d_2))=x_3+2d_3+2d_4+d_5\\
2[3]_{D_8}=x_1+2x_3+d_3+2d_4+d_5+d_7\\
2([1]_{D_8}-(d_6+d_8))=x_1+x_3+d_3+2d_4+2d_5+d_7\\
2([1]_{D_8}-(d_6+d_7+d_8))=2x_3+3d_5+4d_4+3d_3+2d_2+d_1-d_7.
\end{align}
Thus, among the glue vectors $ \langle [1,2,2],[1,1,1],[2,2,1] \rangle $ generating the Niemeier lattice, only vectors $ \langle [0,3,3],[2,1,2] \rangle $ contribute to torsion and the torsion group is $\mathbb Z/2 \mathbb Z \times \mathbb Z /2 \mathbb Z$.

From relations $(1)$ to $(5)$ we deduce the various contributions and heights of the following sections (see Table 5.1.3).
\begin{table}

\begin{center}
\begin{tabular}{|cccccccc|}

\hline
     &  Cont.      &  Cont.   &  Cont.  & Cont. &  Cont. & Cont. &ht. \\
$0+2F+$   &   $A_1$   &  $ A_1$ &  $A_1$ & $A_1$ & $D_4$  & $D_6$  &  \\
\hline
 ${\scriptstyle [0, [3],[3]]}$ & $0$    & 0  & 1/2  & 1/2 & 1  & 2 & 0 \\
%\hline

${\scriptstyle [[2],[2]-(d_1+d_2),[1]]}$   & 1/2 & 1/2 & 0 & 0 & 1 & 2 & 0 \\
%\hline
${\scriptstyle [[2],[1]-(d_6+d_8),[2]]}$     & 1/2 & 1/2 & 1/2 & 1/2 & 1 & 1 & 0 \\
%\hline
${\scriptstyle  [[2]-(d_1+d_2),[1]-(d_6+d_7+d_8),[2]]}$    & 0 & 0 & 1/2 & 0 & 1 & 1 & 3/2 \\
\hline
\end{tabular}
\end{center}
\caption{Contributions and heights of the sections of 5.1.3}
\end{table}

Hence this fibration can be identified with the fibration $\#14$-i.

\subsubsection{} The primitive embedding is into $Ni(D_8^3)$ and given by 
\[A_5= \langle d_8,d_6,d_5,d_4,d_3 \rangle \hookrightarrow D_8^{(1)}\,\,\,\,A_1= \langle d_8 \rangle \hookrightarrow D_8^{(2)}\,\,\,\,A_1= \langle d_8 \rangle \hookrightarrow D_8^{(3)}.\]

As previously $(A_5)_{D_8}^{\perp}= \langle (-6) \rangle \oplus  \langle x_1 \rangle \oplus \langle d_1 \rangle $; we get also $ \langle d_8 \rangle_{D_8}^{\perp}= \langle d_7 \rangle \oplus  \langle x_4=d_7+d_8+2d_6+d_5,d_5,d_4,d_3,d_2,d_1 \rangle =A_1 \oplus D_6$.
Hence $N_{\text{root}}=4A_12D_6$, and the rank is $1$. Moreover it follows the relations
\begin{align}
2[2]_{D_8}=x_1+d_1\\
2[2]_{D_8}=x_3+d_5+2d_4+2d_3+2d_2+2d_1\\
2([1]_{D_8}-(d_5+d_6+d_7+d_8))=2x_3+d_5+4d_4+3d_3+2d_2+d_1-d_7\\
2[3]_{D_8}=3x_3+d_7+2d_5+4d_4+3d_3+2d_2+d_1 \in A_1 \oplus D_6.
\end{align}
We deduce that among the glue vectors generating $Ni(D_8^3)$, only $ \langle [0,3,3],[2,1,2] \rangle $ contribute to torsion. So the torsion group is $\mathbb Z/2 \mathbb Z \times \mathbb Z /2 \mathbb Z$.
From relations $(6)$ to $(9)$ we deduce the various contributions and heights of the following sections (see Table 5.1.4).

\begin{table}

\begin{center}
\begin{tabular}{|cccccccc|}

\hline
     &  Cont.      &  Cont.   &  Cont.  & Cont. &  Cont. & Cont. &ht. \\
$0+2F+$   &   $A_1$   &  $ A_1$ &  $A_1$ & $D_6$ & $A_1$  & $D_6$  &  \\
\hline
 ${\scriptstyle [0, [3],[3]]}$ & $0$    & 0  & 1/2  & 1+1/2 & 1/2  & 1+1/2 & 0 \\
%\hline

${\scriptstyle [[2],[1]-(d_5+d_6+d_7+d_8),[2]]}$   & 1/2 & 1/2 & 1/2 & 1+1/2 & 0 & 1 & 0 \\
%\hline
${\scriptstyle [[2],[2],[1]-(d_5+d_6+d_7+d_8)]}$     & 1/2 & 1/2 & 0 & 1 & 1/2 & 1+1/2 & 0 \\
%\hline
${\scriptstyle  [[3], 0,[3]]}$    & 0 & 1/2 & 0 & 0 & 1/2 & 1+1/2 & 3/2 \\

\hline

\end{tabular}
\end{center}
\caption{Contributions and heights of the sections of 5.1.4}
\end{table}

Hence this fibration can be identified with the fibration $\#15$-i.

\subsubsection{} The primitive embedding is onto $Ni(A_5^4D_4)$ given by $A_5 \hookrightarrow A_5$, $A_1 \oplus A_1 = \langle d_4,d_1> \rangle \hookrightarrow D_4$.
Since $ \langle d_4,d_1 \rangle_{D_4}^{\perp}=A_1^2$, we get $N=N_{\text{root}}=3A_5 2A_1$; thus the rank of the fibration is $0$ and since $\det(N)=24 \times 6^2$, the torsion group is $\mathbb Z/6 \mathbb Z$.

This fibration can be identified with the fibration $\#20$-i.

\end{proof}

\begin{remark}
From fibration $\#20-i$ the surface $S_k$ appears to be a double cover of the rational elliptic modular surface associated to the modular groupe $\Gamma_0(6)$ given in Beauville's paper \cite{Beau}
\[
(x+y)(y+z)(z+x)(t-s)(ts-1)=8sxyz.
\] 
\end{remark}

\section{Proof of Theorem $1.2$}

We recall first on Table \ref{Ta:Fib} the results obtained by Bertin and Lecacheux in \cite{BL}. The notation $\# 17 (18-m)$ for example refers  for $\#17$ to the generic case when relevant and for $(18-m)$ to notations used in Bertin-Lecacheux \cite{BL}.

 \begin{table}[tp]\footnotesize
%\caption{The elliptic fibrations of $Y_2$}
\begin{center}
\begin{tabular}{|l|l|l|l|l|l|l|}
\hline
$L_{\text{root}}$ & $L/L_{\text{root}}$ &  & & \text{Fibers} & \text{R} & \text{Tor.} \\ \hline
  $E_8^3$    &  $(0)$ & & & & &\\ \hline
& $\#1(11-f)$  & $A_1\subset E_8$ & $D_5 \subset E_8$  & $E_7 A_3 E_8$ & $0$ & $(0)$\\ \hline
& $\#2(13-h)$ & $A_1\oplus D_5 \subset E_8$ &   & $A_1 E_8  E_8$ & $1$ & $(0)$\\ \hline
  $E_8 D_{16}$    &  $\mathbb Z /{2 \mathbb Z}$ & & & & &\\ \hline
& $ \#3(30-\phi)$  & $A_1\subset E_8$ & $D_5 \subset D_{16}$  & $E_7 D_{11}$ & $0$ & $(0)$\\ \hline
& $\#4(16-o)$ & $A_1\oplus D_5 \subset E_8$ &   & $A_1 D_{16}$ & $1$ & $\mathbb Z /{2 \mathbb Z}$\\ \hline
& $\#5(17-q)$  & $D_5\subset E_8$ & $A_1 \subset D_{16}$  & $A_3 A_1 D_{14}$ & $0$ & $\mathbb Z /{2 \mathbb Z}$\\ \hline
&  $\#6(25-\delta)$ & $A_1\oplus D_5 \subset D_{16}$ &   & $E_8 A_1 D_{9}$ & $0$ & $(0)$\\ \hline
  $E_7^2 D_{10}$    &  $(\mathbb Z /{2 \mathbb Z})^2$ & & & & &\\ \hline
& $\#7(29-\beta)$  & $A_1\subset E_7$ & $D_5 \subset D_{10}$  & $E_7 D_6 D_5$ & $0$ & $\mathbb Z /{2 \mathbb Z}$\\ \hline
& $\#8(9-r)$  & $A_1\subset E_7$ & $D_5 \subset E_7$  & $ D_6 A_1 D_{10}$ &$1$ & $\mathbb Z /{2 \mathbb Z}$\\ \hline
& $\#8\text{bis}(24-\psi)$  & $A_1\oplus D_5 \subset E_7$ &   & $E_7 D_{10}$ & $1$ & $\mathbb Z /{2 \mathbb Z}$\\ \hline
&  $\#9(12-g)$ & $A_1\oplus D_5 \subset D_{10}$ &   & $E_7 E_7 A_1 A_3$ & $0$ & $\mathbb Z /{2 \mathbb Z}$\\ \hline
& $\#10(10-e)$ & $D_5\subset E_7$ & $A_1 \subset D_{10}$  & $A_1 A_1 D_8 E_7$ & $1$ & $\mathbb Z /{2 \mathbb Z}$\\ \hline
 $E_7 A_{17}$    &  $\mathbb Z /{6 \mathbb Z}$ & & & & &\\ \hline
&  $(21-c)$ & $A_1\oplus D_5 \subset E_7$ &   & $A_{17}$ & $1$ & $\mathbb Z /{3 \mathbb Z}$\\ \hline
& $\#11(19-n)$  & $D_5\subset E_7$ & $A_1 \subset A_{17}$  & $A_1 A_{15}$ & $2$ & $(0)$\\ \hline
 $D_{24}$    &  $\mathbb Z /{2 \mathbb Z}$ & & & & &\\ \hline
& $\#12(23-i)$ & $A_1\oplus D_5 \subset D_{24}$ &   & $A_1 D_{17}$ & $0$ & $(0)$\\ \hline

 $D_{12}^2$    &  $(\mathbb Z /{2 \mathbb Z})^2$ & & & & &\\ \hline
& $\#13(26-\pi)$  & $A_1 \subset D_{12}$ & $D_5 \subset D_{12}$  & $A_1 D_{10} D_{7}$ & $0$ & $\mathbb Z /{2 \mathbb Z}$\\ \hline
& $\#14(22-u)$  & $A_1\oplus D_5 \subset D_{12}$ &   & $A_1 D_5 D_{12}$ & $0$ & $\mathbb Z /{2 \mathbb Z}$\\ \hline
 $D_8^3$    &  $(\mathbb Z /{2 \mathbb Z})^3$ & & & & &\\ \hline
&  $\#15(6-p)$ & $A_1 \subset D_{8}$ & $D_5 \subset D_{8}$  & $A_1 D_{6}A_3 D_{8}$ & $0$ & $(\mathbb Z /{2})^2 $\\ \hline
&  $\#16(14-t)$ & $A_1\oplus D_5 \subset D_{8}$ &   & $A_1 D_8 D_{8}$ & $1$ & $\mathbb Z /{2 \mathbb Z}$\\ \hline
 $D_9 A_{15}$    &  $\mathbb Z /{8 \mathbb Z}$ & & & & &\\ \hline
& $\#17(18-m)$  & $A_1\oplus D_5 \subset D_{9}$ &   & $ A_1 A_1 A_1 A_{15}$ & $0$ & $\mathbb Z /{4 \mathbb Z}$\\ \hline
&  $\#18(28-\alpha)$ & $D_5 \subset D_{9}$ & $A_1 \subset A_{15}$  & $D_4 A_{13}$ & $1$ & $(0)$\\ \hline
 $E_6^4$    &  $(\mathbb Z /{3 \mathbb Z)^2}$ & & & & &\\ \hline
& $\#19(8-b)$  & $A_1 \subset E_6$ & $D_5 \subset E_6$  & $A_5 E_6 E_6$ & $1$ & $\mathbb Z /{3 \mathbb Z}$\\ \hline
 $ A_{11} E_6 D_7$    &  $\mathbb Z /{12 \mathbb Z}$ & & & & &\\ \hline
&  $\#20(7-w)$ & $A_1 \subset E_6$ & $D_5 \subset D_7$  & $A_5 A_1 A_1 A_{11}$ & $0$ & $\mathbb Z /{6 \mathbb Z}$\\ \hline
&  $\#21(27-\mu)$ & $A_1 \subset A_{11}$ & $D_5 \subset D_7$  & $A_9 A_1 A_1 E_6$ & $1$ & $(0)$\\ \hline
&  $(20-j)$ & $A_1\oplus D_5 \subset D_{7}$ &   & $A_{11}E_6  A_1 $ & $0$ & $\mathbb Z /{3 \mathbb Z}$\\ \hline
& $\#22(15-l)$  & $A_1 \subset A_{11}$ & $D_5 \subset E_6$  & $ A_9 D_7$ & $2$ & $(0)$\\ \hline
& $\#23(2-k)$  & $D_5 \subset E_6$ & $A_1 \subset D_7$  & $ A_{11} A_1 D_5$ & $1$ & $\mathbb Z /{4 \mathbb Z}$\\ \hline
 $D_6^4$    &  $(\mathbb Z /{2 \mathbb Z})^4$ & & & & &\\ \hline
&  $\#24(5-d)$ & $A_1 \subset D_6$ & $D_5 \subset D_6$  & $ A_1 D_4 D_6 D_6$ & $1$ & $(\mathbb Z /2)^2$\\ \hline
 $D_6 A_9^2$    &  $\mathbb Z /{2}\times \mathbb Z /{10} $ & & & & &\\ \hline
&  $\#25(3-v)$ & $D_5 \subset D_6$ & $A_1 \subset A_9$  & $ A_{7} A_9$ & $2$ & $(0)$\\ \hline

 $D_5^2 A_7^2$    &  $\mathbb Z /{4}\times \mathbb Z /{8} $ & & & & &\\ \hline
&  $\#26(1-s)$ & $D_5 \subset D_5$ & $A_1 \subset D_5$  & $A_1  A_{3} A_7 A_7$ & $0$ & $\mathbb Z /{8 \mathbb Z}$\\ \hline
& $\#27(4-a)$ & $D_5 \subset D_5$ & $A_1 \subset A_7$  & $D_5 A_5 A_7 $ & $1$ & $(0)$\\ \hline

\end{tabular}
\end{center}
\caption{The elliptic fibrations of $Y_2$}\label{Ta:Fib}
\end{table}

Comparing to the fibrations of the family you remark more elliptic fibrations with $2$-torsion sections on $Y_2$. All the corresponding involutions are denoted $\tau$. Some of them are specialisations for $s=1$ of the generic ones. Those generic which are Morrison-Nikulin still remain Morrison-Nikulin for $Y_2$ by a Sch\"{u}tt's lemma \cite{Sc}, namely $\#4-\tau$, $\#8-\tau$, $\#16-\tau$, $\#17-\tau$, $\#23-\tau$, $\#24 a)-\tau$, $\#26-\tau$. Others (($\#5-\tau$, $\#8\text{bis}-\tau$, $\#10-\tau$, $\#15 <(p,0>)-\tau$, $\#24\, b-\tau$, $\#24\,c-\tau$) are specific to $K_2$ and cannot be deduced from elliptic fibrations of the generic Kummer. To identify them we have to use the distinguished property of $Y_2$, that is $Y_2$ is a singular $K3$ with Picard number $20$. 

Hence $Y_2$ inherits of a Shioda-Inose structure, that is the quotient of $Y_2$ by an involution is isomorphic to a Kummer surface $K_2$ realized from the product of CM elliptic curves \cite{SI}, \cite{SM} provided in the following way.

Since the transcendental lattice of $Y_2$ is $\mathbb T (Y_2)= \begin{pmatrix}                                                                                         2 & 0 \\
                                                   0 & 4
                                                 \end{pmatrix}=\begin{pmatrix}

                                                   2a & b \\
                                                   b & 2c
                                                 \end{pmatrix}$

we get $b^2-4ac=-8$, $\tau_1=\frac{-b+\sqrt{b^2-4ac}}{2a}$, $\tau_2=\frac{b+\sqrt{b^2-4ac}}{2}$, hence $\tau=\tau_1=\tau_2=i \sqrt{2}$.

We deduce $K_2=E\times E/\pm 1$ with $E=\mathbb C/(\mathbb Z+\tau \mathbb Z)$ and $j(E)=j(i\sqrt{2})=8000$. The fact that the two CM elliptic curves are equal and satisfy $j(E)=8000$ can be obtained also by specialisation from the Shioda-Inose structure of the family. (see \ref{3} Remark 4.1 ).

The elliptic curve $E$ can be also put in the Legendre form:

\[E \,\,\,\,\,y^2=x(x-1)(x-l),\]
$l$ satisfying the equation $j=8000=\frac{256(1-l+l^2)^3}{l^2(l-1)^2}$. Thus $l=3\pm 2\sqrt{2}$ or $l=-2\pm 2\sqrt{2}$ or $l=\frac{1\pm \sqrt{2}}{2}$.

\begin{proposition}\label{prop}
The elliptic fibrations $\#8 \text{bis}-\tau$ and $\#10-\tau$ are elliptic fibrations of $K_2$.
\end{proposition}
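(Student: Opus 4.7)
The plan is to verify, for each of the two fibrations, that the translation $\tau$ by the $2$-torsion section is a Morrison-Nikulin involution, so that by definition $Y_2/\tau$ is a Kummer surface; and then to observe that there is only one candidate, namely $K_2$, because $Y_2$ already has a unique Shioda-Inose Kummer partner (the one identified in the text, $E\times E/\pm 1$ with $j(E)=8000$). Concretely, I would apply the Morrison criterion recalled after Definition~4.1: exhibit two orthogonal copies $E_8^{(1)}(-1)$ and $E_8^{(2)}(-1)$ inside $\mathrm{NS}(Y_2)$ interchanged by $\tau$, exactly as is done in section~4.4 for fibration $\#8$ on the generic $Y_k$.

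For fibration $\#8\mathrm{bis}$ the singular fibers are $III^*$ and $I_6^*$ together with one $2$-torsion section and a rank-$1$ infinite section. The first $E_8(-1)$ is spanned by the seven non-identity components of the $III^*$ fiber together with the zero section $O$, which attaches to the unique simple component met by $O$; the Dynkin diagram $\widetilde{E}_7$ of $III^*$ plus this extra node is precisely $E_8$. The second $E_8(-1)$ is built from the $I_6^*$ fiber: its six double components together with its simple components form a $\widetilde{D}_6$ diagram, and adjoining the $2$-torsion section (which meets a far simple component) and the infinite section produces a second $E_8$-type configuration. The translation by the $2$-torsion section, which on an $I_6^*$-fiber is the involution exchanging the two pairs of opposite simple components (cf.\ the description given in section~4.4 for the $I_6^*$ of $\#8$), swaps these two $E_8(-1)$'s and fixes their sum; the criterion then gives $Y_2/\tau \simeq K_2$. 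For fibration $\#10$, with fibers $A_1\oplus A_1\oplus D_8\oplus E_7$ and rank $1$, the argument is analogous: the first $E_8(-1)$ is obtained from the $E_7=III^*$ fiber and the zero section as above, and the second $E_8(-1)$ is built from the $D_8=I_4^*$ fiber together with the two $A_1$ fibers, the $2$-torsion section, and the infinite section; the translation by the $2$-torsion section swaps simple components of $I_4^*$ and permutes the two $I_2$ fibers, realising the required exchange of the two $E_8(-1)$ sublattices.

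As an alternative (or check), one can use the transcendental lattice approach: for each fibration, write the Weierstrass equation in the form $y^2=x^3+A(t)x^2+B(t)x$ (specialising the generic equation of $\#10$ from Table~3 at $s=1$, and obtaining a model of $\#8\mathrm{bis}$ by a $2$-neighbor step from one of the fibrations of $Y_2$ listed in Table~7), apply the quotient formula $y^2=x^3-2Ax^2+(A^2-4B)x$, read off the singular fibers of the $2$-isogenous surface, and compute $|\det\mathrm{NS}(Y_2/\tau)|$ via the Shioda-Tate formula. Since $|\det T(Y_2)|=8$ and the chain $2T_{Y_2/\tau}\subseteq T_{Y_2/\tau}(2)\subseteq T_{Y_2}$ from section~4.1 forces $|\det T_{Y_2/\tau}|\in\{8,32\}$, obtaining $32$ is equivalent to $\tau$ being Morrison-Nikulin, and then the resulting Kummer must be the surface $K_2$ attached to $Y_2$ by its Shioda-Inose structure.

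The main obstacle is locating the two orthogonal $E_8(-1)$'s explicitly: on $Y_2$ these fibrations are \emph{not} specialisations of generic fibrations of the Apéry-Fermi family, so the extra sections required to fill out the second $E_8(-1)$ only exist on $Y_2$, and one must verify using the Mordell-Weil generators of $\#8\mathrm{bis}$ and $\#10$ on $Y_2$ (computed from Table~7) that the relevant intersection numbers fit the $E_8$ diagram and that $\tau$ indeed interchanges the two sublattices, rather than just preserving each.
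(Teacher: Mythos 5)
Your strategy is genuinely different from the paper's. The paper does not invoke the two-$E_8$ criterion here at all: in Section 4.2 it has already written down, following Kuwata--Shioda, explicit elliptic fibrations $F_8$ and $G_8$ of the Kummer surface $E_{l_1}\times E_{l_2}/\pm 1$ specialised to $j_1=j_2=8000$ (which is exactly $K_2$), with singular fibers $III^*,I_2^*,I_4,I_2,I_1$ and $III^*,I_3^*,3I_2$ respectively; the proof of the proposition then consists of computing the Weierstrass equations of $\#10$-$\tau$ and $\#8\mathrm{bis}$-$\tau$ (Table 8) and identifying them with $F_8$ and $G_8$. This is a direct identification of the quotient with a known fibration of $K_2$ and requires no lattice theory. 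Your plan instead is to prove that $\tau$ is a Morrison--Nikulin involution and then invoke uniqueness of the Shioda--Inose partner; that logic is sound (for a singular $K3$, $\det T_{Y_2/\tau}=32$ forces $T_{Y_2/\tau}=T_{Y_2}(2)=T_{K_2}$, and a singular $K3$ is determined by its transcendental lattice), and your discriminant-based fallback is essentially the method the paper uses for $\#16$ in Proposition 5.1.

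As written, however, your primary argument has a concrete gap. The lattice you describe as the first $E_8(-1)$ for $\#8\mathrm{bis}$ --- the seven non-identity components of the $III^*$ fiber together with the zero section --- is not an $E_8(-1)$: the zero section meets only the identity component of $III^*$, so it is orthogonal to the other seven and the lattice you get is $E_7(-1)\oplus\langle -2\rangle$. The correct configurations (compare Section 4.4 for $\#8$ and $\#17$) must mix components of several fibers with the sections so that the extra node attaches at the right position of the chain; one must then check that the two copies are orthogonal and that $\tau$ genuinely swaps them rather than preserving each (the components fixed by the fiberwise symmetry cannot lie in either copy). None of this is carried out, and your description of the $I_6^*$ configuration is likewise not verified. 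The Shioda--Tate fallback is viable but also incomplete as stated: to obtain $|\det NS(Y_2/\tau)|=32$ rather than $8$ you must prove that the image of the infinite section actually generates the free part of the Mordell--Weil group of the quotient --- Remark 5.1 shows precisely how $2$-divisibility of that image destroys the factor of $4$ and hence the Morrison--Nikulin property. So the proposal is a reasonable plan, but the decisive computations, which the paper replaces by the explicit match with $F_8$ and $G_8$, are missing.
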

\begin{proof}

It follows from the $4.2$ fibration $F_8$ that the fibration $\#10-\tau$ with Weierstrass equation
\[Y^2=X^3-2U^2(U-1)X^2+U^3(U+1)^2(U-4)X,\]
singular fibers $III^*(0)$, $I_2^*(\infty)$, $I_4(-1)$, $I_2(4)$, $I_1(-1/2)$, and $\mathbb Z /2 \mathbb Z$-torsion is an elliptic fibration of $K_2$. Similarly from  the $4.2$ fibration $G_8$, we deduce that the elliptic fibration  $\#10-\tau$ with Weierstrass equation
\[Y^2=X^3+2(t+5t^2)X^2+t^2(4t+1)(t^2+6t+1)X,\]
singular fibers $III^*(\infty)$, $I_3^*(0)$, $3I_2(-1/4,t^2+6t+1=0)$, and $\mathbb Z /2 \mathbb Z$-torsion is an elliptic fibration of $K_2$.
\end{proof}

%Kuwata and Shioda \cite{KS} gave different type of elliptic fibrations on a Kummer surface associated to the product of elliptic curves $E_1 \times E_2$ in the Legendre form,  more precisely the type $\mathcal I_8$, with Weierstrass equation
%\[Y^2=X^3-u((2l_1l_2-l_1-l_2+2)u-2)X^2-u^2(u-1)(l_1l_2u-1)((l_1-1)(l_2-1)u-1)X\]
 
%\begin{enumerate}
%\item
%The choice $l_1=3+2\sqrt{2}$ and $l_2=3-2\sqrt{2}$ gives the Weierstrass equation 
%\[Y^2=X^3+2u(u+1)X^2+u^2(u-1)^2(4u+1)X\]
%or with the change of variable $u=1/U$
%\[Y^2=X^3-2U^2(U-1)X^2+U^3(U+1)^2(U-4)X\]
%%with singular fibers $III^*$ $I_2^*$ $I_4$ $I_2$ and torsion group $\mathbb Z /2 \mathbb Z $. 
%\item
%The choice $l_1=l_2=3+2\sqrt{2}$ gives the second elliptic fibration.

%\end{enumerate}

To achieve the proof of Theorem 1.2 we need also the following lemma.
\begin{lemma}\label{lem:SZ}
The Kummer $K_2$ has exactly $4$ extremal elliptic fibrations given by Shimada Zhang \cite{SZ} with the type of their singular fibers and their torsion group 
\begin{enumerate}

\item $E_7$ $A_7$ $A_3$ $A_1$  $\mathbb Z /2 \mathbb Z$,

\item $D_9$ $A_7$ $A_1$ $A_1$, $\mathbb Z /2 \mathbb Z$,

\item $D_6$ $D_5$ $A_7$, $\mathbb Z /2 \mathbb Z$,

\item $A_7$ $A_3$ $A_3$ $A_3$ $A_1$ $A_1$, $\mathbb Z /2 \mathbb Z \times \mathbb Z /4 \mathbb Z $.
\end{enumerate}

\end{lemma}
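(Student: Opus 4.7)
The statement is a direct appeal to the Shimada--Zhang classification \cite{SZ}, so the plan is essentially to identify precisely which row of their tables applies to our $K_2$ and then to read off the four fibrations. I will explain the identification and indicate how one independently checks the extremality condition.

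First, I would determine the transcendental lattice of $K_2$. Since $K_2 = Y_2/\iota$ arises from a Morrison--Nikulin involution, and we already know from the introduction that $T(Y_2) = \begin{pmatrix} 2 & 0 \\ 0 & 4 \end{pmatrix}$, the chain $2T_{K_2} \subseteq T_{Y_2}(2) \subseteq T_{K_2}$ together with the Shioda--Inose isometry $T_{Y_2}(2) \simeq T_{K_2}$ implies $T_{K_2} = \begin{pmatrix} 4 & 0 \\ 0 & 8 \end{pmatrix}$. Independently, since $K_2 = \mathrm{Km}(E \times E)$ with $j(E) = 8000$, the transcendental lattice of $\mathrm{Km}(E_1 \times E_2)$ is $T(E_1) \oplus T(E_2)$ scaled by 2, and here $T(E) = \begin{pmatrix} 2 & 0 \\ 0 & 4 \end{pmatrix}$ because $E$ has CM by $\mathbb{Z}[\sqrt{-2}]$ (discriminant $-8$).

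Second, I would locate this pair of CM elliptic curves in the Shimada--Zhang tables. Their classification is organized by the CM type of $(E_1, E_2)$; the entry $(E, E)$ with $j(E) = 8000$ corresponds to the single imaginary quadratic order $\mathbb{Z}[\sqrt{-2}]$ appearing twice, and the associated transcendental lattice of the Kummer matches our computation above. Reading off the extremal fibrations in that row yields exactly the four types stated: $E_7 A_7 A_3 A_1$, $D_9 A_7 A_1^2$, $D_6 D_5 A_7$, and $A_7 A_3^3 A_1^2$, with torsion groups $\mathbb{Z}/2$, $\mathbb{Z}/2$, $\mathbb{Z}/2$ and $\mathbb{Z}/2 \times \mathbb{Z}/4$ respectively.

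Third, as a sanity check, I would verify the extremality constraints intrinsically: each configuration has total root rank $18 = 20 - 2$, so rank 0 Mordell--Weil is consistent with $\rho(K_2) = 20$, and the torsion must embed into $\prod G_{L_i}/(\text{trivial contributions})$ in each case. A quick height-pairing/Shioda--Tate verification
\[
\det(NS) \cdot |\mathrm{Tor}|^2 \;=\; \prod_v |G_{L_v}| \cdot |\mathrm{disc}(T)|
\]
with $|\mathrm{disc}(T_{K_2})| = 32$ confirms the listed torsion orders.

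The main obstacle here is not the proof but the bookkeeping: one must match the normalization conventions of Shimada--Zhang (scaling of transcendental lattices of Kummer surfaces, and labelling of CM orders) with the conventions used throughout our paper. Once this dictionary is in place, exhaustiveness of the list is guaranteed by Shimada--Zhang and no further computation is required.
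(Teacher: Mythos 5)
Your proposal coincides with the paper's (implicit) argument: the paper gives no proof of this lemma beyond the citation, and the intended justification is exactly what you describe --- compute $T_{K_2}=\begin{pmatrix}4&0\\0&8\end{pmatrix}$ from the Shioda--Inose isometry $T_{K_2}\simeq T_{Y_2}(2)$, then extract from the Shimada--Zhang tables all extremal configurations whose transcendental lattice is $[4\;0\;8]$, using that a singular $K3$ surface is determined up to isomorphism by its transcendental lattice so that these entries all live on $K_2$ and exhaust its extremal fibrations. One correction to your sanity check: the Shioda--Tate identity for a rank-zero fibration reads $|\det T_{K_2}|\cdot|\mathrm{Tor}|^2=\prod_v |G_{L_v}|$ (e.g.\ $32\cdot 4=128=2\cdot 8\cdot 4\cdot 2$ for $E_7A_7A_3A_1$ and $32\cdot 64=2048$ for $A_7A_3^3A_1^2$); as displayed, your formula carries a spurious extra factor of $|\mathrm{disc}(T)|$ on the right-hand side, and the aside identifying $T_{K_2}$ as ``$T(E_1)\oplus T(E_2)$ scaled by $2$'' should instead refer to the transcendental lattice of the abelian surface $E\times E$, namely $T(E\times E)(2)$.
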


From lemma (\ref{lem:SZ} (2)) we obtain the fibration $\#5-\tau$ and from lemma (\ref{lem:SZ} (3)) the fibration $\#15 \langle (p,0)\rangle -\tau$.

We notice also that fibrations $\#17-\tau$ and $\#26-\tau$ obtained by specialisation are also fibration $(4)$ of lemma (\ref{lem:SZ}) and fibration $\#23- \tau$, by a $2$-neighbor process of parameter $m=\frac{X}{k^2(k^2+4)}$ gives fibration $(3)$ of lemma (\ref{lem:SZ}).
  
Finally, by a $2$-neighbor process of parameter $m=\frac{X}{d^2(d+1)}$, fibrations $\#24 b)-\tau$ and $\#24 c)-\tau$ gives fibration $\#16-\tau$, hence are elliptic fibrations of $K_2$.

\begin{corollary}
As a byproduct of the proof we get Weierstrass equations for extremal fibrations of lemma (\ref{lem:SZ}) (2), (3), (4).

\end{corollary}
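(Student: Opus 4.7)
The plan is to extract explicit Weierstrass equations for the three extremal fibrations (2), (3), (4) of Lemma \ref{lem:SZ} by identifying each of them with one of the fibrations $\#n$-$\tau$ of $K_2$ already produced in the proof of Theorem 1.2, and then reading off the Weierstrass equation generated by Silverman's $2$-isogeny formula recalled in the introduction, namely
\[
y^{2}=x^{3}+Ax^{2}+Bx \;\longmapsto\; y^{2}=x^{3}-2Ax^{2}+(A^{2}-4B)x,
\]
applied to the Weierstrass equation of the source fibration $\#n$ of $Y_{2}$ (or $Y_k$ specialized to $k=2$, i.e.\ $s=1$).

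First I would match each Shimada--Zhang configuration with a fibration treated in the proof of Theorem 1.2. The configuration $D_{9}\,A_{7}\,A_{1}\,A_{1}$ with $\mathbb{Z}/2\mathbb{Z}$-torsion is realized by $\#5$-$\tau$, whose equation is obtained by applying the $2$-isogeny formula to the Weierstrass equation of $\#5$ listed in Table \ref{Ta:W-Eq} (with $s=1$). The configuration $D_{6}\,D_{5}\,A_{7}$ with $\mathbb{Z}/2\mathbb{Z}$-torsion is realized by $\#15\langle(p,0)\rangle$-$\tau$, which was also obtained in the proof as the $2$-neighbor of $\#23$-$\tau$ under the parameter $m=X/(k^{2}(k^{2}+4))$. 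Finally, the configuration $A_{7}\,A_{3}\,A_{3}\,A_{3}\,A_{1}\,A_{1}$ with torsion $\mathbb{Z}/2\mathbb{Z}\times\mathbb{Z}/4\mathbb{Z}$ is realized by $\#17$-$\tau$ and equivalently by $\#26$-$\tau$, whose Weierstrass equations come directly from Table \ref{Ta:Iso8} at $s=1$.

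For each case I would then write down the resulting cubic, factor the discriminant to locate the reducible fibers, and verify that the Kodaira types and the Mordell--Weil torsion match those prescribed by Shimada--Zhang. The necessary torsion sections are already exhibited in the proof of Theorem 1.2 (the $2$-torsion carried over from the source fibration, plus the extra $4$-torsion for case (4) coming from the specialization inside the $\mathbb{Z}/8\mathbb{Z}$ torsion of $\#17$ on $Y_{2}$ listed in Table \ref{Ta:Fib}).

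The main obstacle is not computational but conceptual: one must check that at $s=1$ no degeneration of singular fibers occurs beyond what is recorded, and that the identification with the Shimada--Zhang list is unambiguous. This is guaranteed by the extremality condition (rank $0$ and maximal torsion), which for these reducible fiber types makes the fibration unique up to isomorphism among elliptic fibrations of a singular $K3$ of transcendental lattice $\begin{pmatrix}2&0\\0&4\end{pmatrix}$; hence the Weierstrass equations produced by the $2$-isogeny are automatically those of the extremal fibrations (2), (3), (4).
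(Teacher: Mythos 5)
Your overall strategy --- identify each extremal configuration of Lemma \ref{lem:SZ} with one of the quotient fibrations $\#n$-$\tau$ constructed in the proof of Theorem 1.2 and read off its Weierstrass equation --- is exactly the paper's: case (2) is $\#5$-$\tau$, case (3) is $\#15\langle(p,0)\rangle$-$\tau$ (also reached from $\#23$-$\tau$ by the $2$-neighbor step with parameter $m=X/(k^{2}(k^{2}+4))$), case (4) is $\#17$-$\tau$ and $\#26$-$\tau$, and the resulting equations are those collected in Table \ref{Ta:IsY_2}.

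Two points in your write-up need correction. First, for case (2) you propose to apply the $2$-isogeny formula to the equation of $\#5$ from Table \ref{Ta:W-Eq} ``with $s=1$''. But the generic fibration $\#5$ has trivial torsion (Table \ref{Ta:Fib2}, fibres $A_3D_{13}$, rank $1$); the $2$-torsion section, hence the involution $\tau$, only appears after specialisation to $Y_2$, where the configuration degenerates to $A_3A_1D_{14}$ and the rank drops to $0$. The paper explicitly lists $\#5$-$\tau$ among the involutions ``specific to $K_2$'' that cannot be deduced from the generic member, so one must first locate the new $2$-torsion section on the specialised surface and move it to $(0,0)$ (yielding the model $y^2=x^3+(q^3+q^2+2q-2)x^2+(1-2q)x$ of Table \ref{Ta:IsY_2}) before Silverman's formula applies; the same caveat holds for $\#15\langle(p,0)\rangle$-$\tau$, whose $2$-torsion point $(p,0)$ exists only once the torsion grows to $(\mathbb Z/2\mathbb Z)^2$ on $Y_2$. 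Second, your uniqueness argument invokes the transcendental lattice $\left[\begin{smallmatrix}2&0\\0&4\end{smallmatrix}\right]$, which is $T_{Y_2}$; the extremal fibrations of Lemma \ref{lem:SZ} live on the Kummer $K_2$, whose transcendental lattice is $[4\;0\;8]$. The identification is in fact secured by Lemma \ref{lem:SZ} itself (exactly four extremal fibrations of $K_2$, with pairwise distinct configurations), but only after one knows the quotient surface is $K_2$, i.e.\ that the involution is Morrison--Nikulin --- which is established in the proof of Theorem 1.2 and does not follow from extremality alone.
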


The table of symplectic automorphisms of order $2$ (self involutions) results from an easy computation.

\begin{table}[tp]\footnotesize
\[%
\begin{array}
[l]{|l|l|l|}\hline
\text{No} & \text{Weierstrass Equation} & \text{From or to}\\\hline

\#4 &   

\begin{array}
[l]{l}%
%\begin{array}
%[c]{c}%
   y^2=x^3+(o^3-5o^2+2)x^2+x \\
   \hline
I_{12}^*(\infty) , I_2 (0), 4 I_1(1,5,o^2-4o-4)\\
\hline
%\end{array}\\ %\hline
   Y^2=X(X-o^3+5o^2)(X-o^3+5o^2-4)\\
   \hline
 I_6^*(\infty), I_4 (0), 4I_2(1,5,o^2-4o-4)\\
%\text{lemma}2c)
% \\%\hline
\end{array}
   & \text{Spec.} \#4-i \\\hline \hline
\#5 &
\begin{array}
[l]{l}%
%\begin{array}
%[c]{c}%
 y^2=x^3+(q^3+q^2+2q-2)x^2+(1-2q)x\\
 \hline
I_{10}^* (\infty), I_4 (0),I_2 (\frac{1}{2}),2I_1 (q^2+2q+5) \\
%\hline
%\end{array}
%\\
 \hline
Y^2=X^3-2(q^3+q^2+2q-2)X^2+Xq^4(q^2+2q+5)\\
\hline
I_5^* (\infty), I_8 (0), 2I_2 (q^2+2q+5),I_1 (\frac{1}{2})\\
%\text{lemma} 1)b) \\
%\hline

\end{array}
  & \text{lemma}(\ref{lem:SZ})(2)\\\hline \hline

\#8 &
\begin{array}
[l]{l}%
%\begin{array}
%[c]{c}%
  y^2=x^3-r(r^2-r+2)x^2+r^3x\\
  \hline
I_6^*(\infty), I_2^*(0), I_2(1),2I_1(\pm 2i)\\
\hline
%\hline
%\end{array}\\ \hline
  Y^2=X^3+2r(r^2-r+2)X^2+(r-1)^2r^2(r^2+4)X\\
\hline
I_3^*(\infty), I_1^*(0), I_4(1),2I_2(\pm 2i)\\
% \text{lemma} 2)h)   \\
%\hline
\end{array}
  
&  \text{Spec.} \#8-i \\\hline \hline

 \#8bis  &
\begin{array}
[l]{l}%
%\begin{array}
%[c]{c}%
 y^2=x^3-(\psi+5\psi^2)x^2-\psi^5x \\
 \hline
I_{6}^*(\infty),III^*(0),3I_1(-\frac{1}{4},\psi^2+6\psi+1)\\
\hline %\hline
%\end{array}\\ \hline
  Y^2=X^3+2(5\psi^2+\psi)X^2+X\psi^2(4\psi+1)(\psi^2+6\psi+1)\\
  \hline
 III^*(\infty),I_3^*(0),3I_2(-(\frac{1}{4}),\psi^2+6\psi+1) \\ 
%\text{lemma} 2)f)   \\
%\hline
\end{array}
& \text{lemma}(\ref{prop})

\\\hline \hline
 \#10 &
\begin{array}
[l]{l}%
%\begin{array}
%[c]{c}%
y^2=x(x^2-e^2(e-1)x+e^3(2e+1))\\\hline
I_4^*(\infty),III^*(0), 2I_2(-1,-\frac{1}{2}),I_1(4) \\
\hline %\hline
%\end{array}\\ \hline
  Y^2=X^3+2e^2(e-1)X^2+e^3(e-4)(e+1)^2X \\\hline
I_2^*(\infty),III^*(0),I_4(-1),I_2(4),I_1(-\frac{1}{2})\\
% \text{lemma} 2)b)  \\
%\hline
\end{array}
& \text{lemma}(\ref{prop})

\\\hline \hline
\begin{array}
[l]{l}
 \#15 \\
<(p,0)> \\
\end{array}
  & 
\begin{array}
[l]{l}%
%\begin{array}
%[c]{c}%
 y^2=x(x-p)(x-p(p+1)^2) \\
 \hline
 I_4^*(\infty),I_2^*(0),I_4(-1),I_2(-2)  \\
\hline 
%\end{array}\\ \hline
 %<p,0> \\
Y^2=X^3-p(p^2+2p-1)X^2-p^3(p+2)X \\
\hline
 I_2^*(\infty),I_1^*(0),I_8(-1),I_1(-2) \\
 %\hline
 % \text{lemma} 1)c)   \\
%\hline

\end{array}
& \text{lemma}(\ref{lem:SZ})(3)
\\\hline \hline

 \#16  &
\begin{array}
[l]{l}%
%\begin{array}
%[c]{c}%
 y^2=x^3+t(t^2+4t+1)x^2+t^4x \\
 \hline
2I_4^*(\infty,0),I_2(-1),2I_1(t^2+6t+1=0) \\
\hline %\hline
%\end{array}\\ \hline
  Y^2=X^3-2t(t^2+4t+1)X^2+t^2(t+1)^2(t^2+6t+1)X \\
  \hline
 2I_2^*(\infty,0),I_4(-1),2I_2(t^2+6t+1) \\
 % \text{ lemma} 2)a)  \\
%\hline
\end{array}
& \text{Spec.}\#16-i
\\\hline \hline

\#17  &
\begin{array}
[l]{l}%
%\begin{array}
%[c]{c}% 
 y^2=x(x^2+x(\frac{1}{4}(m^2-4)^2-2)+1) \\
 \hline
I_{16}(\infty),3I_2(0,\pm 2),2I_1(\pm 2\sqrt{2}) \\
\hline %\hline
%\end{array}\\ \hline
 Y^2=X(X-\frac{1}{4}m^4+2m^2)(X-\frac{1}{4}m^4+2m^2-4)\\
\hline
I_8(\infty),3I_4(0,\pm 2),2I_2(\pm 2\sqrt{2}) \\
%  \text{lemma} 1)d) \\
%\hline
\end{array}
& \begin{matrix}
\text{lemma}(\ref{lem:SZ})(4)\\
 \text{Spec.}\#17-i
\end{matrix}
\\\hline \hline

\#23  &
\begin{array}
[l]{l}%
%\begin{array}
%[c]{c}% 
 y^2=x^3+x^2(\frac{1}{4}k^4-k^3+k^2-2k)+k^2x \\
 \hline
I_{12}(\infty),I_1^*(0),I_2(2),3I_1(4,\pm 2i) \\
\hline %\hline
%\end{array}\\ \hline
%<0,0> \\
  Y^2=X^3-(\frac{1}{2}k^4-2k^3-4k)X^2+\frac{k^3(k-4)(k^2+4)(k-2)^2}{16}X \\
  \hline
I_6(\infty),I_2^*(0), I_4(2), 3 I_2(4,\pm 2i) \\
%\hline
%  \text{lemma} 2)e)  \\
%\hline
%\end{array}
\end{array}
 & \begin{matrix}
\frac{X}{k^2(k^2+4} \\
 \text{to}\,\,\, \text{lemma}(\ref{lem:SZ})(3)
\end{matrix}
\\
\hline \hline
%\end{array}
\begin{array}
[l]{l} 
\#24 \\
a)\langle(0,0)\rangle\\
b)\langle(d+d^2,0)\rangle\\
c)\langle(d^2+d^3,0)\rangle\\
\end{array}
& 
\begin{array}
[l]{l}%
%\begin{array}
%[c]{c}% 
y^2=x(x-(d+d^2))(x-(d^3+d^2))   \\ \hline
2I_2^*(\infty,0),I_0^*(-1),I_2(1) \\
\hline
%\hline
%\end{array}\\ \hline
%a) <(0,0> \\
a): Y^2=X^3+2d(d+1)^2X^2+d^2(d^2-1)^2X \\ \hline
2I_1^*(\infty,0),I_0^*(-1),I_4(1) \\
  %\text{lemma} 2)g)   

\hline 
% b) <(d+d^2,0)> \\
 b): Y^2=X^3+2d(d+1)(d-2)X^2+d^4(d+1)^2X \\ \hline
I_1^*(\infty),I_4^*(0),I_0^*(-1),I_1(1) \\  %\hline
 %\text{lemma} 2)d)  \\
%\hline
c):Y^2=X^3-2d(d+1)(2d-1)X^2+Xd^2(d+1)^2\\
\end{array}

& 
\begin{array}
[l]{l}
 a) \text{Spec.}\#24-i\\
 b) m=X/d^2(d+1)\\
\text{to}\,\,\, \#16-\tau\\
c) \text{ similar to } b)
\end{array}

\\\hline \hline

\#26  & 

\begin{array}
[l]{l}%
%\begin{array}
%[c]{c}% 
y^2=x^3+x^2(\frac{1}{4}(s-1)^4-2s^2)+s^4x  \\
\hline
2I_8(\infty,0),I_4(1),I_2(-1),2I_1(3\pm 2\sqrt{2}) \\
\hline
%\hline
%\end{array}\\ \hline
  Y^2=X(X-\frac{1}{4}(s-1)^4+4s^2)(X-\frac{1}{4}(s-1)^4) \\ \hline
I_8(1),3I_4(0,-1,\infty ),2I_2( 3\pm 2\sqrt{2}) \\
 %\text{lemma} 1)d)    \\
%\hline
\end{array}
& \begin{matrix}
\text{Spec.} \#26-i\\
\text{lemma}(\ref{lem:SZ})(4)
\end{matrix}
\\\hline %\hline 
\end{array}
\]
\caption{Morrison-Nikulin involutions of $Y_2$ (fibrations of $K_2$)}\label{Ta:IsY_2}
\end{table}

\begin{table}[tp]\footnotesize
\[%
\begin{array}
[c]{|c|c|}\hline
\text{No} & \text{Weierstrass Equation} \\\hline

\#7  &
\begin{array}
[c]{c}%
%\begin{array}
%[c]{c}%
   y^2=x^3+2\beta^2(\beta-1)x^2+\beta^3(\beta-1)^2x \\
   \hline
 I_2^*(\infty),III^*(0),I_1^*(-1)\\
\hline
%\hline
%\end{array}\\ \hline
 Y^2=X^3-4\beta^2(\beta-1)X^2+4\beta^3(\beta-1)^3X \\
 \hline
I_1^*(\infty),III^*(0),I_2^*(1)\\
%\text{self}  \\
%\hline
\end{array}
\\
\hline \hline

\#9  & 
\begin{array}
[c]{c}%
%\begin{array}
%[c]{c}%
y^2=x^3+4g^2x^2+g^3(g+1)^2x  \\
\hline
2III^*(\infty,0),I_4(-1),I_2(1)  \\
\hline
%\hline
%\end{array}\\ \hline
Y^2=X^3-8g^2X^2-4g^3(g-1)^2X  \\
\hline
2III^*(\infty,0),I_4(1),I_2(-1) \\
 %\text{self}  \\
%\hline
\end{array}
\\\hline \hline 

\#13 &
\begin{array}
[c]{c}%
%\begin{array}
%[c]{c}%
  y^2=x^3+x^2\pi(\pi^2-2\pi-2)+\pi^2(2\pi+1)x  \\
  \hline
I_6^*(\infty),I_3^*(0), I_2(-1/2),I_1(4) \\
\hline
%\hline
%\end{array}\\ \hline
 Y^2=X^3-2X^2\pi(\pi^2-2\pi-2)+\pi^5(\pi-4)X  \\
 \hline
I_6^*(0),I_3^*(\infty), I_2(4),I_1(-1/2)  \\
% \text{self}    \\
%\hline
\end{array}
\\ \hline \hline

 \#14 & 
\begin{array}
[c]{c}%
%\begin{array}
%[c]{c}%
 y^2=x^3+u(u^2+4u+2)x^2+u^2x  \\
 \hline
I_8^*(\infty),I_1^*(0),I_2(-2),I_1(-4) \\
\hline
%\hline
%\end{array}\\ \hline
 % Y^2=X^3-2u(u^2+4u+2)X^2+u^3(u^3+8u^2+20u+16)X  \\
Y^2=\left (X-u\left (u-2\right )^{2}\right )X\left (X-4\,u\right )\\ 
 \hline
I_4^*(\infty),I_2^*(0),I_4(-2),I_2(-4) \\
%   \#15   \\
\end{array}
\\ \hline \hline 
\begin{array}
[c]{c}
\#15 \\
a)<(0,0)>\\
b)<(p(p+1)^2,0)>\\
   
\end{array}& 
\begin{array}
[c]{c}%
%\begin{array}
%[c]{c}%
 y^2=x(x-p)(x-p(p+1)^2) \\
 \hline
I_4^*(\infty),I_2^*(0),I_4(-1),I_2(-2)  \\
\hline
%\hline
%\end{array}\\ \hline
%<0,0> \\
a):Y^2=X(X+4p+p^3+4p^2)(X+p^3)  \\
\hline
I_4^*(0),I_2^*(\infty),I_4(-2),I_2(-1)\\
%  \text{self} \\
%\end{array}
%\\
 \hline \hline

% \#15*  & 
%\begin{array}
%[c]{c}%
%\begin{array}
%[c]{c}%
%y^2=x(x-p)(x-p(p+1)^2)  \\
%\hline
%I_4^*(\infty),I_2^*(0),I_4(-1),I_2(-2)  \\
%\hline
%\hline
%\end{array}\\ \hline
%<p(p+1)^2,0> \\
b):  Y^2=X^3-2p(2p^2+4p+1)X^2+p^2X \\
  \hline
I_8^*(\infty),I_1^*(0),I_2(-1),I_1(-2)\\
%    \#14    \\
\end{array}
\\ \hline \hline 

\#20  & 
\begin{array}
[c]{c}%
%\begin{array}
%[c]{c}%
y^2=x^3-(2-w^2-\frac{1}{4}w^4)x^2-(w^2-1)x  \\
\hline
I_{12}(\infty),I_6(0),2I_2(\pm 1),2I_1(\pm2i\sqrt{2}) \\
\hline
%\hline
%\end{array}\\ \hline
Y^2=X^3+2(2-w^2-\frac{1}{4}w^4)X^2+\frac{1}{16}w^6(w^2+8)X  \\
\hline
I_{12}(0),I_6(\infty),2I_2(\pm 2i\sqrt{2}),2I_1(\pm 1) \\
%  \text{self} \\
\end{array}
\\ \hline % \hline

\end{array}
\]

\caption{Self involutions of $Y_2$ }\label{Ta:IsY_4}
\end{table}

\section{$2$-isogenies and isometries}

Theorem 1.2, where the 2-isogenous $K3$ surfaces of $Y_2$ are either its Kummer $K_2$ or $Y_2$ itself, cannot be generalised to all the other singular $K3$ surfaces of the Ap\'ery-Fermi family. The reason is the relation with a Theorem of Boissi\`ere, Sarti and Veniani \cite{BSV}, telling when $p$-isogenies ($p$ prime) between complex projective $K3$ surfaces $X$ and $Y$ define isometries between their rational transcendental lattices $T_{X,\mathbb {Q}}$ and $T_{Y,\mathbb {Q}}$. ( These lattices are isometric if there exists $M\in \text{Gl}(n,\mathbb Q)$ satisfying $T_{X,\mathbb {Q}}=M^tT_{Y,\mathbb {Q}}M$. Let us recall the part of their Theorem related to $2$-isogenies.

\begin{theorem} \cite{BSV}
Let $\gamma: X \rightarrow Y$ be a $2$-isogeny between complex projective $K3$ surfaces $X$ and $Y$. Then $\text{rk}( T_{Y,\mathbb {Q}})=\text{rk}( T_{X,\mathbb {Q}})=:r$ and 
\begin{enumerate}
\item If $r$ is odd, there is no isometry between $T_{Y,\mathbb {Q}}$ and $T_{X,\mathbb {Q}}$.
\item If $r$ is even, there exists an isometry between $T_{Y,\mathbb {Q}}$ and $T_{X,\mathbb {Q}}$ if and only if $T_{Y,\mathbb {Q}}$ is isometric to $T_{Y,\mathbb {Q}}(2)$. This property is equivalent to the following: for every prime number $q$ congruent to $3$ or $5$ modulo $8$, the $q$-adic valuation $\nu_q(\det T_Y)$ is even.
\end{enumerate}

\end{theorem}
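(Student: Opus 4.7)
The plan is to first translate the $2$-isogeny into a relation between the rational transcendental lattices, then reduce the question to rational equivalence of a form with its rescaling by $2$, and finally carry out a local analysis via Hilbert symbols.

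Since $\gamma:X\to Y$ has degree $2$ and is induced by a symplectic (Nikulin) involution $i$ acting trivially on $T_X$, the induced maps on cohomology satisfy $\gamma_*\circ\gamma^*=2\cdot\mathrm{id}$ on $T_Y$ and $\gamma^*\circ\gamma_*=1+i^*=2\cdot\mathrm{id}$ on $T_X$. By the projection formula $(\gamma^*u,\gamma^*v)_X=2(u,v)_Y$, so $\gamma^*$ realises an isometric embedding $T_Y(2)\hookrightarrow T_X$ between lattices of the same rank. Tensoring with $\mathbb{Q}$ yields an isometry $T_{Y,\mathbb{Q}}(2)\cong T_{X,\mathbb{Q}}$, so the whole question becomes: when is $T_{Y,\mathbb{Q}}$ isometric to its rescaling $T_{Y,\mathbb{Q}}(2)$?

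For part (1), I would observe that rescaling a form of rank $r$ by $2$ multiplies the determinant by $2^r$. Since the class of the discriminant in $\mathbb{Q}^*/(\mathbb{Q}^*)^2$ is an invariant of the rational isometry class, no isometry $V\cong V(2)$ can exist when $r$ is odd. For part (2), with $r$ even the signatures and discriminants modulo squares agree automatically, so by Hasse--Minkowski the question reduces to the equality of Hasse invariants $\varepsilon_p(V)=\varepsilon_p(V(2))$ at every place. Diagonalising $V\cong\langle a_1,\ldots,a_r\rangle$ and expanding via bilinearity of the Hilbert symbol gives
\[
\varepsilon_p(V(2))=\varepsilon_p(V)\cdot(2,\det V)_p^{\,r-1}\cdot(2,2)_p^{\binom{r}{2}}.
\]
The Hilbert product formula, combined with $(2,2)_p=1$ at every odd $p$ and at $\infty$, forces $(2,2)_p=1$ also at $p=2$, so the second factor disappears. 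Since $r-1$ is odd, the condition reduces to $(2,\det V)_p=1$ at every prime $p$.

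The technical heart of the proof is the prime-by-prime evaluation of $(2,\det V)_p$, and this is where I expect the main obstacle. At an odd prime $p$, writing $\det V=p^{\nu_p}u$ with $u$ a $p$-adic unit gives $(2,\det V)_p=\left(\frac{2}{p}\right)^{\nu_p}$, which is non-trivial precisely when $\nu_p$ is odd and $p\equiv 3,5\pmod 8$. The value at $p=\infty$ is trivially $1$ since $2>0$, and the value at $p=2$ is then determined by the Hilbert product formula and carries no new information. Assembling the local conditions, one recovers exactly the stated criterion: $\nu_q(\det T_Y)$ must be even for every prime $q\equiv 3,5\pmod 8$.
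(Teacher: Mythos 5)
The paper does not prove this statement --- it is quoted verbatim from Boissi\`ere--Sarti--Veniani \cite{BSV} --- so there is no internal proof to compare against; your argument is correct and is essentially the one in \cite{BSV}: use the projection formula and the triviality of the Nikulin involution on $T_X$ to get $T_{X,\mathbb{Q}}\cong T_{Y,\mathbb{Q}}(2)$, reduce to deciding whether $T_{Y,\mathbb{Q}}\cong T_{Y,\mathbb{Q}}(2)$, then compare discriminants mod squares (odd rank) and Hasse invariants via bilinearity of the Hilbert symbol (even rank). Your local evaluation $(2,\det V)_q=\left(\tfrac{2}{q}\right)^{\nu_q(\det V)}$ at odd $q$, with the product formula disposing of $q=2$ and $q=\infty$, yields exactly the stated congruence criterion.
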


As a Corollary we deduce the following result.

\begin{theorem}
Among the singular $K3$ surfaces of the Ap\'ery-Fermi family defined for $k$ rational integer, only $Y_2$ and $Y_{10}$ possess symplectic automorphisms of order $2$ (``self $2$-isogenies'').

\end{theorem}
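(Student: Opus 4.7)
The strategy is to combine the Boissière-Sarti-Veniani theorem recalled above with an enumeration of those $Y_k$, $k\in\mathbb{Z}$, which are singular. A \emph{self $2$-isogeny} of $Y_k$ is a rational $2$-isogeny $\gamma:Y_k\dashrightarrow Y_k$ induced by a symplectic involution whose quotient is birational to $Y_k$ itself. Since source and target share the same transcendental lattice, part (1) of the theorem rules out the odd-rank case at once. For a singular member $\mathrm{rk}\,T(Y_k)=2$ is even, and part (2) then translates into the arithmetic condition
\[
\nu_q\bigl(\det T(Y_k)\bigr)\equiv 0\pmod 2\quad\text{for every prime } q\equiv 3,5\pmod 8.
\]

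First I would list the integer values of $k$ for which $Y_k$ is singular. Via the identification of the base of the pencil with $X_0(6)/\langle w_2,w_3\rangle$ and the Shioda-Inose correspondence with the Kummer surface $(E_t\times E'_t)/\pm 1$, such members correspond to pairs of isogenous CM elliptic curves, a constraint which for rational integer $k$ selects only finitely many candidates. For each one I would then compute $T(Y_k)$ either directly from the CM data of $E_t$ (as was done for $Y_2$ using $\tau=i\sqrt{2}$) or from the Shioda-Tate formula applied to a specialised elliptic fibration with explicit singular fibres, torsion and generators---the same type of calculation that produced the lattice $NS(S_k)$ in Section 5.2.

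Once this finite tabulation is in hand the arithmetic verification is routine. For $k=2$ one recovers $T(Y_2)=\mathrm{diag}(2,4)$ with $\det T(Y_2)=8=2^3$, whose only prime divisor $2$ satisfies $2\not\equiv 3,5\pmod 8$; the Boissière-Sarti-Veniani condition is trivially met, and Theorem 1.2 has already exhibited five self $2$-isogenies explicitly. For $k=10$ the analogous computation yields a Gram matrix whose determinant decomposes only into primes outside the residue classes $3$ and $5$ modulo $8$, and a self $2$-isogeny is then realised concretely via an elliptic fibration of $Y_{10}$ carrying a $2$-torsion section whose $2$-isogenous surface is again $Y_{10}$. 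For every other singular $Y_k$ with $k\in\mathbb{Z}$, at least one prime $q\equiv 3$ or $5\pmod 8$ divides $\det T(Y_k)$ with odd multiplicity, and the theorem forbids the existence of any self $2$-isogeny.

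The hard part will be the enumeration: ensuring the completeness of the list of integer $k$ for which $Y_k$ is singular, and for each such $k$ identifying the transcendental lattice precisely enough to control the $q$-adic valuation of its determinant modulo squares. Once that list is fixed, the remaining $q$-adic parity check against the residue classes $3,5\pmod 8$ separates the two admissible values $k=2,10$ from all other singular members of the pencil.
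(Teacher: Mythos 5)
Your proposal follows essentially the same route as the paper: enumerate the singular integer members $Y_0,Y_2,Y_3,Y_6,Y_{10},Y_{18},Y_{102},Y_{198}$ (the paper takes this list from a numerical computation of Boyd), compute their transcendental lattices via specialised fibrations and the Shimada--Zhang tables or the Shioda--Tate formula, apply the Boissi\`ere--Sarti--Veniani parity criterion on $\nu_q(\det T_{Y_k})$ for $q\equiv 3,5 \pmod 8$ to exclude all but $k=2,10$, and then exhibit an explicit $2$-torsion fibration of $Y_{10}$ whose $2$-isogenous surface is $Y_{10}$ itself. The approach and all the key steps match; you correctly flag that the completeness of the list of singular integer $k$ is the point requiring outside input.
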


\begin{proof}
The singular $K3$-surfaces of the Ap\'ery-Fermi family defined for $k$ rational integer are 

\[Y_0,\qquad Y_2,\qquad Y_3,\qquad Y_6,\qquad Y_{10},\qquad Y_{18},\qquad Y_{102},\qquad Y_{198}. \]
This list has been computed numerically by Boyd \cite{Boy}.
Using the notation \cite{SZ}, that is writing the transcendental lattice $T_Y=
\begin{pmatrix}
a & b \\
b & c
\end{pmatrix}$ as $T_Y=[a \; b\; c]$ we get:
\[T_{Y_0}=[4 \quad 2 \quad 4] \qquad T_{Y_2}=[2 \quad 0 \quad 4]\qquad T_{Y_6}=[2 \quad 0 \quad 12].\]

They are obtained by specialisation of fibration $\#20$ for $k=0,$ $2$ and $6$. For $k=0$ the elliptic fibration has rank $0$ and singular fibers of type $I_{12}$, $I_4$, $2I_3$. For $k=2$, the transcendental lattice is already known. For $k=6$, the elliptic fibration has rank $0$ and type of singular fibers $I_{12}$, $I_3$, $2I_2$. Now using Shimada-Zhang table \cite{SZ}, we derive the previous announced transcendental lattices.

The transcendental lattices $T_{Y_3}$ and $T_{Y_{18}}$ were computed in the  paper \cite{BFFLM}. With the method used there, we can compute the transcendental lattices of $Y_{10}$, $Y_{102}$ and $Y_{198}$.
We obtain:
\[T_{Y_3}=[2\quad 1 \quad 8] \qquad T_{Y_{10}}=[6 \quad 0 \quad 12] \qquad T_{Y_{18}}=[10 \quad 0 \quad 12]\]
\[T_{Y_{102}}=[12 \quad 0 \quad 26]\qquad \qquad  [T_{Y_{198}}=[12 \quad 0 \quad 34].\]

Applying Bessi\`ere, Sarti and Veniani's Theorem, we conclude that only $Y_2$ and $Y_{10}$ may have self isogenies. By Theorem 1.2, $Y_2$ has self isogenies. We shall prove that $Y_{10}$ satisfies the same property. 

Consider the following elliptic fibration of rank $0$ of $Y_{10}$ (other interesting properties of $Y_{10}$ will be studied in a forthcoming paper):
\[y^2=x^3+x^2(9(t+5)(t+3)+(t+9)^2)-xt^3(t+5)^2\]
with singular fibers $III^*(\infty)$, $I_6(0)$, $I_4(-5)$, $I_3(-9)$, $I_2(-4)$ and $2$-torsion.
Its $2$-isogenous curve has a Weierstrass equation
\[Y^2=X^3+X^2(-20t^2-180t-432)+4X(t+4)^2(t+9)^3\]
with singular fibers  $III^*(\infty)$, $I_6(-9)$, $I_4(-4)$, $I_3(0)$, $I_2(-5)$, rank $0$ and $2$-torsion.
Hence this $2$-isogeny defines an automorphism of order $2$ of $Y_{10}$ given by $x=-\frac{X}{2}$, $y=\frac{iY}{2\sqrt{2}}$.

\end{proof}

Moreover we observe that 
\[T_{Y_2}=[2 \; 0 \; 4], \qquad T_{Y_2,\mathbb Q}=[2 \; 0 \; 1],\]
\[T_{K_2}=[4 \; 0 \; 8], \qquad T_{K_2,\mathbb Q}=[2 \; 0 \; 1], \]
 Similarly
\[ T_{Y_{10},\mathbb Q}=[6 \; 0 \; 3], \qquad  T_{K_{10},\mathbb Q}=[3 \; 0 \; 6]. \]

Hence we suspect some relations between the transcendental lattices of $K_i$ and of $S_i$ for singular $Y_i$. We give some examples of such relations in the following proposition.

\begin{proposition}
Even if the $2$-isogenies from $Y_0$, $Y_6$ are not isometries, the following rational transcendental lattices satisfy the relations
\begin{enumerate}
\item $T_{K_0,\mathbb Q}=T_{S_0,\mathbb Q}$,
\item $T_{K_6,\mathbb Q}=T_{S_6,\mathbb Q}$,
\item $K_3=S_3$.
\end{enumerate}

\end{proposition}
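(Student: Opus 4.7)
The plan is to separate the rational content of parts (1) and (2) from the integral content of part (3).

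For parts (1) and (2): I invoke the general formalism of 2-isogenies recalled in Section 4.1. A 2-isogeny $p : X \to Y$ of complex projective K3 surfaces induces via pushforward a rational Hodge isometry $T_{X, \mathbb{Q}}(2) \simeq T_{Y, \mathbb{Q}}$. Applying this identity twice -- to the Morrison-Nikulin isogeny $Y_k \to K_k$ furnished by the Shioda-Inose structure and to the non-Morrison-Nikulin isogeny $Y_k \to S_k$ -- yields
\[
T_{K_k, \mathbb{Q}} \simeq T_{Y_k, \mathbb{Q}}(2) \simeq T_{S_k, \mathbb{Q}},
\]
which gives parts (1) and (2) at once. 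The apparent paradox that neither 2-isogeny is itself a rational isometry -- the scaling $(2)$ changes the Hasse invariant at $2$ -- while the composite equality $T_{K_k, \mathbb{Q}} \simeq T_{S_k, \mathbb{Q}}$ does hold, is resolved by noting that both sides admit the common rational model $T_{Y_k, \mathbb{Q}}(2)$.

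For part (3): by the classical result of Shioda-Inose that a singular K3 surface is determined up to isomorphism by its integral transcendental lattice, it suffices to prove $T_{S_3} \simeq T_{K_3}$ as integral lattices. The Shioda-Inose formula gives directly $T_{K_3} = T_{Y_3}(2) = [4 \; 2 \; 16]$, of determinant $60$. To compute $T_{S_3}$, I specialize one of the Weierstrass equations of $S_k$ from Table \ref{Ta:Iso2} (for instance the fibration $\#20$-i) to $s$ satisfying $s^{2} - 3s + 1 = 0$, analyse the specialised singular fibers and Mordell-Weil group via the Shioda-Tate formula to recover $NS(S_3)$, and read off $T_{S_3}$ as the orthogonal complement of $NS(S_3)$ in the K3 lattice $U^{3} \oplus E_{8}(-1)^{2}$. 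Since positive definite even rank-$2$ forms of determinant $60$ form a small genus, the identification $T_{S_3} \simeq T_{K_3}$ reduces to a finite check.

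The main obstacle lies in the integral control required for (3): at the specialisation $s = (3 + \sqrt{5})/2$ the fibration may acquire additional Mordell-Weil sections defined over $\mathbb{Q}(\sqrt{5})$, and the Mordell-Weil rank must be determined accurately for the Shioda-Tate computation to yield the right Néron-Severi discriminant. A useful cross-check is to verify that $NS(S_3)$ contains two orthogonal copies of $E_{8}(-1)$, i.e. that $S_3$ itself admits a Shioda-Inose structure, which by Morrison's criterion would confirm $K_3 \simeq S_3$ independently.
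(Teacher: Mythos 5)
For parts (1) and (2) your argument is correct and takes a genuinely different route from the paper. The paper proceeds by explicit computation: it determines $T_{K_0}=[8\;4\;8]$ and $T_{S_0}=[2\;0\;6]$ from rank-zero specialisations of the fibrations $\#8$-i and $\#20$-i together with Shimada--Zhang's table, and then exhibits an explicit matrix in $\mathrm{GL}_2(\mathbb Q)$ conjugating one Gram matrix into the other; for $k=6$ it computes $T_{S_6}=[4\;0\;6]$, $T_{K_6}=[4\;0\;24]$ and diagonalises both to $\mathrm{diag}(1,6)$ over $\mathbb Q$. You instead use the chain $2T_Y\subseteq p_{*}T_X=T_X(2)\subseteq T_Y$ recalled in Section 4.1: a finite-index sublattice spans the same rational quadratic space, so both $T_{K_k,\mathbb Q}$ and $T_{S_k,\mathbb Q}$ are isometric to $T_{Y_k,\mathbb Q}(2)$ and hence to each other. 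This is cleaner, works uniformly in $k$, and explains \emph{why} (1) and (2) hold and why they are compatible with the Boissi\`ere--Sarti--Veniani obstruction; what it does not give is the actual Gram matrices, which the explicit approach delivers as a by-product. As a proof of the stated rational equalities it is perfectly adequate.

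For part (3) there is a genuine gap. Your strategy --- prove $T_{S_3}\cong T_{K_3}$ as integral lattices and invoke the Shioda--Inose classification of singular $K3$ surfaces --- is sound in principle, but the step you defer to ``a finite check'' is precisely the hard part, and the determinant does not settle it: $[2\;0\;30]$, $[6\;0\;10]$, $[4\;2\;16]$ and $[8\;2\;8]$ are pairwise non-isomorphic even positive-definite binary forms, all of determinant $60$. After the Shioda--Tate computation you would therefore still need the full discriminant form of $NS(S_3)$ (hence the complete Mordell--Weil group with its torsion and height pairing) and a verification that the resulting genus contains a single class. The paper itself only extracts $\det T_{S_3}=\det T_{K_3}=60$ from Shioda--Tate (taking care that the section $Q_3$ is not $2$-divisible) and then avoids the lattice identification altogether: starting from the fibration of $K_3$ induced by $\#26$-i, it produces the genus-one fibration whose Jacobian is the fibration $\#15$-i of $S_3$ and exhibits an explicit rational point on the associated quartic, so that this genus-one fibration has a section and $S_3=K_3$ directly. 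Finally, your proposed cross-check --- locating two orthogonal copies of $E_8(-1)$ in $NS(S_3)$ --- is vacuous here: every singular $K3$ surface admits a Shioda--Inose structure, so this would only show that $S_3$ is sandwiched with \emph{some} Kummer surface, not that it coincides with $K_3$.
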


\begin{proof}
\begin{enumerate}
\item For $k=0$ we get two elliptic fibrations of rank $0$, namely $\#20$ and $\#8$. The fibration $\#8$-i gives a rank $0$ elliptic fibration of $K_0$ with Weierstrass equation
\[y^2=x^3+2x^2(t^3+1)+x(t-1)^2(t^2+t+1)^2,\]
type of singular fibers $D_7$, $3A_3$, $A_2$, $4$-torsion and $T_{K_0}=[8 \; 4 \; 8]$.
On the other end the fibration $\#20$-i gives a rank $0$ elliptic fibration of $S_0$
\[y^2=x(x-\frac{1}{4}(t-3I)(t+I)^3)(x-\frac{1}{4}(t+3I)(t-I)^3)\]
with type of singular fibers $3A_5$ ($\infty ,\pm I$), $3A_1$ ($0,\pm 3I$), $\mathbb Z/2 \times \mathbb Z/6$-torsion, the $3$-torsion points being $(\frac{1}{4}(t^2+1)^2,\pm \frac{1}{2}(t^2+1)^2)$. Hence, by Shimada-Zhang 's list
$T_{S_0}=[2 \; 0 \; 6]$.
Now we can easily deduce the relation
\[
\begin{pmatrix}
1/2 & 0 \\
1/2 & -1
\end{pmatrix}
\begin{pmatrix}
8 & 4 \\
4 & 8
\end{pmatrix}
\begin{pmatrix}
1/2 & 1/2 \\
0  & -1
\end{pmatrix} =
\begin{pmatrix}
2 & 0 \\
0 & 6
\end{pmatrix}.
\]

\item For $k=6$ the elliptic fibration $\#20$ has rank $0$ and $\#20-i$ gives a rank $0$ elliptic fibration of $S_6$:
\[y^2=x^3+x^2(-\frac{t^4}{2}+6t^3-21t^2+18t+\frac{3}{2})+x\frac{(t-3)^2}{16}(t^2-6t+1)^3,\]
with singular fibers $2I_6(t^2-6t+1=0)$, $I_6(\infty)$, $I_4(3)$, $2I_1(0,6)$, and $\mathbb Z/6\mathbb Z$-torsion. Using Shimada-Zhang's list \cite{SZ}, we find $T_{S_6}=[4 \; 0 \; 6]$. Since 
\[T_{K_6}=\begin{pmatrix}
           4 &  0  \\
           0 &  24
\end{pmatrix}
\underset{\mathbb Q}{\sim}
\begin{pmatrix}
1 & 0 \\
0 & 6
\end{pmatrix}\]
and
\[T_{S_6}=\begin{pmatrix}
           4 &  0  \\
           0 &  6
\end{pmatrix}
\underset{\mathbb Q}{\sim}
\begin{pmatrix}
1 & 0 \\
0 & 6
\end{pmatrix}\]

we get straightforward 
\[T_{K_6,\mathbb Q}=T_{S_6,\mathbb Q}.\]

\item Consider the elliptic fibration $\#20$ of $Y_3$ with Weierstrass equation
\[y^2=x^3+\frac{1}{4}(t^4-6t^3+15t^2-18t-3)x^2-t(t-3)x,\]
singular fibers $I_{12}(\infty)$, $2I_3(t^2-3t+1=0)$, $2I_2(0,3)$, $2I_2(t^2-3t+9=0)$, rank $1$ and $6$-torsion. The infinite section $P_3=(t,-\frac{1}{2}t(t^2-3t+3))$, of height $\frac{5}{4}$ generates the free part of the Mordell-Weil group, since $\det(T_{Y_3})=15$ by the previous theorem and by the Shioda-Tate formula 
\[\det(T_{Y_3})=\frac{5}{4} \frac{12\times 3^2 \times 2^2}{6^2}=15.\]
Its $2$-isogenous curve has Weierstrass equation

\[y^2=x^3+(-\frac{1}{2}t^4+3t^3-\frac{15}{2}t^2+9t+\frac{3}{2})x^2+\frac{1}{16}(t^2-3t+9)(t^2-3t+1)x,\]
singular fibers $3I_6(\infty, t^2-3t+1=0)$, $2I_2(t^2-3t+9=0)$, $2I_1(3,0)$, rank $1$ and $6$-torsion. The section $Q_3$ image by the $2$-isogeny of the infinite section $P_3$ is an infinite section of height $\frac{5}{2}$. Since neither $Q_3$ nor $Q_3+(0,0)$ are $2$-divisible, the section $Q_3$ generates the free part of the Mordell-Weil group. Hence by the Shioda-Tate formula, it follows
\[\det(T_{S_3})=\frac{5}{2}\frac{6^3 \times 2^2}{6^2}=60=\det(T_{K_3}).\]
We can show that $K_{3}$ and $S_{3}$ are the same surface. To prove this
property we show that a genus one fibration is indeed an elliptic fibration. 
We start with the fibration of $K_{3}$ obtained from $\#26$-i and parameter $m=\frac{y}{t\left(  x+\frac{1}{4}\left(  t-s\right)
^{2}\left(  ts-1\right)  ^{2}\right)  }.$ %(page $22-23$)
 If $k=3$ and $s=s_{3}:=\frac{3+\sqrt{5}}{2}$ we get
$E_{m}.$ Then changing $X=s_{3}^{2}x$ and $Y=s_{3}^{3}y$  it follows
\[
y^{2}-3m y x=x\left(  x-m^{2}\right)  \left(  x-\frac{1}{8}\left(  \left(
112-48\sqrt{5}\right)  m^{4}-16m^{2}+7+3\sqrt{5}\right)  \right).
\]
The next fibration is obtained with the parameter $n=\frac{x}{m^{2}}$. Now  if $w=\frac{y}{m^{2}}$ it gives the following quartic
in $w$ and $m$ %
\[
w^{2}-3 m n w+2\left(  3\sqrt{5}-7\right)  n\left(  n-1\right)  m^{4}-n\left(
n-1\right)  \left(  n-2\right)  m^{2}-\frac{1}{9}\left(  3\sqrt{5}+7\right)
n\left(  n-1\right).
\]
Notice the point $\left(  w=-\frac{1}{4}\left(  7+3\sqrt{5}\right)  n\left(
n+1\right)  \left(  n-1\right)  ,m=\frac{1}{4}\left(  2+\sqrt{5}\right)
\left(  2n-1+\sqrt{5}\right)  \right)  $ on this quartic, so it is an
elliptic fibration of $K_3$ which is $\#15$-i.

\end{enumerate}
\end{proof}

\begin{remark}
The Kummer surface $K_0$ is nothing else than the Schur quartic \cite{BSV} (section 6.3) with equation
\[x^4-xy^3=z^4-zt^3.\]
\end{remark}


\begin{thebibliography}{30}

\bibitem[Beau]{Beau}  A. Beauville, \emph{Les familles stables de courbes elliptiques sur $\mathbb{P}^1$ admettant 4 fibres singuli\`eres}, C.R.Acad.Sc. Paris, 294 (1982), 657--660.

\bibitem[BGL]{BGL} M.J. Bertin, A. Garbagnati, R. Hortsch, O. Lecacheux, M. Mase, C. Salgado, U. Whitcher, \emph{ Classifications of Elliptic Fibrations of a Singular K3 Surface}, in Women in Numbers Europe, 17--49, Research Directions in Number Theory, Association for Women in Mathematics Series, Springer, 2015. 

\bibitem[BFFLM]{BFFLM} M.J. Bertin, A. Feaver, J. Fuselier, M. Lal\`{i}n, M. Manes, \emph{Mahler measure of some singular $K3$-surfaces}, Contemporary Math. 606, CRM Proceedings, Women in Numbers 2, Research Directions in Number Theory, 149--169.


\bibitem[BL]{BL} M.J. Bertin, O. Lecacheux, \emph{ Elliptic fibrations on the modular surface associated to $\Gamma_1(8)$}, in  Arithmetic and geometry of K3 surfaces and Calabi-Yau threefolds, 153--199, Fields Inst. Commun., {\bf 67}, Springer, New York, 2013.

\bibitem[BL1]{BL1} M.J. Bertin, O. Lecacheux, \emph{Automorphisms of certain Niemeier lattices and elliptic fibrations,} 
Albanian Journal of Mathematics
Volume 11, Number 1 (2017)  13–-34. 
%ISSN: 1930-1235; (2017)

\bibitem[BSV]{BSV} S. Boissi\`ere, A. Sarti, D. Veniani, \emph{ On prime degree isogenies between $K3$ surfaces,} Rendiconti del Circolo Matematico di Palermo Series 2, Volume 66, Issue 1, (2017),  3–-18.

\bibitem[Bou]{Bou}
N. Bourbaki,  \emph{Groupes et alg\` ebres de Lie}, Chap.4, 5, 6, Masson, Paris (1981).

\bibitem[Boy]{Boy}
D. Boyd, \emph{Private communication}.

\bibitem[C]{C}P. Comparin, A. Garbagnati, \emph{Van Geemen-Sarti involutions and elliptic
fibrations on K3 surfaces double cover of $\mathbb P^2$,} J. Math. Soc. Japan
Volume 66, Number 2 (2014), 479--522. %\ ArXiv 1110.6380V2

\bibitem[Co]{Co}
J. H. Conway, N. J. A. Sloane, \emph{Sphere packings, lattices and groups}, Grundlehren der Mathematischen Wissenshaften {\text{290}}, Second Edition, Springer-Verlag, (1993).


\bibitem[D]{D} I.V. Dolgachev, \emph{ Mirror symmetry for lattice polarized $K3$ surfaces}, Journal of Mathematical Sciences, 81:3, (1996),  2599–-2630.  %arXiv:alg-geom/9502005v2, 7 Jan. 1996, 1-40.

\bibitem [DG]{DG} E. Dardanelli, B. van Geemen, \emph{Hessians and the moduli space of cubic surfaces}, in Algebraic Geometry Korean-Japan Conference in Honor of I. Dolgachev'60th Birthday 2004, Contemp. Math.  422 (2007) 17--36. AMS. 


\bibitem[E]{E} N. Elkies, \emph{Private communication}.


\bibitem[El]{El} N. Elkies, A. Kumar, \emph{$K3$ surfaces and equations for Hilbert modular surfaces,} Algebra Number Theory,
Volume 8, Number 10 (2014), 2297--2411.


\bibitem[G]{G} B. van Geemen, A. Sarti, \emph{Nikulin Involutions on $K3$ Surfaces}, Math. Z.
255 (2007),731--753.

\bibitem[K]{K} J. Keum, \emph{A note on elliptic $K3$ surfaces}, Transactions of the American Mathematical Society,
Vol. 352, No. 5 (2000), 2077--2086.

\bibitem[Ko]{Ko}K. Koike,  \emph{Elliptic $K3$ surfaces admitting a Shioda-Inose structure}, Commentarii Mathematici Universitatis Sancti Pauli Vol 61 N° 1 (2012), 77--86. 
%preprint (2011), arXiv:1104.1470.

\bibitem[Ku]{Ku}
M. Kuwata, T. Shioda,  \emph{ Elliptic parameters and defining equations for elliptic fibrations on a Kummer surface},
 Advanced Studies in Pure Mathematics 50, 2008, Algebraic Geometry in East Asia - Hanoi 2005, 177-215.
 
\bibitem[Ku1]{Ku1}
M. Kuwata, "Maple Library 'Elliptic Surface Calculator'",

http://c-faculty.chuo-u.ac.jp/$\sim$ kuwata/ESC.php.



\bibitem[M]{M} D. Morrison, \emph{ On $K3$ surfaces with large Picard number}, Invent. Math. 75 (1984), 105--121.

\bibitem[Na] {Na} N. Narumiya, 
H. Shiga, \emph{ The mirror map for a family of $K3$ surfaces induced from
the simplest $3$-dimensional reflexive polytope,} Proceedings on Moonshine and
related topics (Montr\'{e}al, QC, 1999),  CRM Proc. Lecture Notes, 30
(2001), 139--161, Amer. Math. Soc.

\bibitem[Nik]{Nik}
V. V. Nikulin, \emph{Integral symmetric bilinear forms and some of their applications}, Math. USSR Izv. {\bf{14}}, No. 1 (1980), 103--167.


\bibitem[N1]{N1} V. V. Nikulin,\emph{ Finite automorphism groups of Kahler surfaces of type K3}, Proc. Moscow Math. Soc. 38 (1979), 75--137.


\bibitem[Nis]{Nis}
 K.-I. Nishiyama, \emph{The Jacobian fibrations on some $K3$ surfaces and their Mordell-Weil groups}, Japan. J. Math. {\textbf{22}} (1996), 293--347.


\bibitem[PA]{PA}
The PARI Group, Bordeaux GP/PARI version 2.7.3, 2015, http://pari.math.u-bordeaux.fr.

\bibitem[PS]{PS} C. Peters, J. Stienstra,  \emph{A pencil of $K3$-surfaces related to Ap\'ery's recurrence for $\zeta (3)$ and Fermi surfaces for potential zero}, Arithmetic of Complex Manifolds (Erlangen, 1988) (W.-P. Barth \& H. Lange, eds.), Lecture Notes in Math., vol. 1399, Springer, Berlin (1989), 110--127.

\bibitem[Sc]{Sc} M. Sch\"{u}tt, \emph{Sandwich theorems for Shioda-Inose structures}, Izvestiya Mat. 77 (2013), 211--222.



\bibitem[Sc-Shio]{Sc-Shio}
M. Sch\"{u}tt, T. Shioda,  \emph{Elliptic surfaces}, Algebraic geometry in East Asia - Seoul 2008, Adv. Stud. Pure Math. {\textbf{60}} (2010), 51--160.






\bibitem[Shio]{Shio}
T. Shioda, \emph{Kummer sandwich theorem of certain elliptic $K3$ surfaces}, Proc. Japan Acad. \textbf{82}, Ser. A (2006), 137--140.

\bibitem[Shio1]{Shio1}
T. Shioda  \emph{ On elliptic modular surfaces} J. Math. Soc. Japan 24 
(1972), no 1,  20--59.



\bibitem[SI]{SI}
T. Shioda, H. Inose, \emph{On singular $K3$ surfaces}, in: Baily, W. L. Jr., Shioda, T. (eds), Complex analysis and algebraic geometry, Iwanami Shoten, Tokyo (1977), 119--136.

 \bibitem[SM]{SM}
T. Shioda, N. Mitani, \emph{Singular abelian surfaces and binary quadratic forms}, in: Classification of algebraic varieties and compact complex manifolds, Lect. Notes in Math. 412 (1974).

\bibitem[SZ]{SZ} I. Shimada, D. Q. Zhang, \emph{
Classification of extremal elliptic $K3$ surfaces and fundamental groups of open $K3$ surfaces}, Nagoya Math. J. {\bf{161}} (2001), 23--54.



\bibitem[Si]{Si}
J. Silverman, The arithmetic of Elliptic Curves, Graduate texts in mathematics Springer 106 (1986).


%\bibitem[V]{V} H. Verrill, 


















\end{thebibliography}
\end{document}